\newtheorem{theorem}{Theorem}[section]
\newtheorem{lemma}{Lemma}[section]
\newtheorem{proposition}{Proposition}[section]
\numberwithin{equation}{section}
\author[G. Nemes]{Gerg\H{o} Nemes}
\address{School of Mathematics, The University of Edinburgh, James Clerk Maxwell Building, The King's Buildings, Peter Guthrie Tait Road, Edinburgh EH9 3FD, UK}
\email{Gergo.Nemes@ed.ac.uk}
\keywords{asymptotic expansions, error bounds, remainder terms, Lommel function, Anger function, Scorer functions, Struve functions, Weber function}
\subjclass[2010]{41A60, 30E15, 34M40}
\begin{document}

\title[The large-argument asymptotics of the Lommel and allied functions]{Error bounds for the large-argument asymptotic expansions of the Lommel and allied functions}

\begin{abstract} In this paper, we reconsider the large-$z$ asymptotic expansion of the Lommel function $S_{\mu,\nu}(z)$ and its derivative. New representations for the remainder terms of the asymptotic expansions are found and used to obtain sharp and realistic error bounds. We also give re-expansions for these remainder terms and provide their error estimates. Applications to the asymptotic expansions of the Anger--Weber-type functions, the Scorer functions, the Struve functions and their derivatives are provided. The sharpness of our error bounds is discussed in detail, and numerical examples are given.
\end{abstract}
\maketitle

\section{Introduction and main results}\label{section1}

In the present paper, we reconsider the large-$z$ asymptotic expansion of the Lommel function $S_{\mu,\nu}(z)$, which is the unique solution of the inhomogeneous Bessel differential equation
\[
\frac{d^2 w(z)}{dz^2} + \frac{1}{z}\frac{dw(z)}{dz} + \left( 1 - \frac{\nu ^2}{z^2} \right)w(z) = z^{\mu  - 1},
\]
having the property that $S_{\mu,\nu}(z)\sim z^{\mu-1}$ as $z\to \infty$ in the sector $|\arg z|\leq \pi-\delta$ with $\delta >0$ being fixed. The orders $\mu$ and $\nu$ of this function can take arbitrary complex values. As a function of $z$, with fixed $\mu$ and $\nu$, $S_{\mu,\nu}(z)$ is analytic in the domain $|\arg z|< \pi$ and (in general) possesses a branch-point singularity at the origin. The Lommel function plays an important role in the evaluation of certain integrals involving Bessel functions (see, for example, \cite[Ch. III]{Luke}).

It was shown by Watson \cite[\S 10.75]{Watson} in 1922 that, as $z\to \infty$ in the sector $|\arg z|\leq \pi-\delta$ with $\delta >0$ being fixed, the Lommel function has the asymptotic expansion
\begin{equation}\label{eq1}
S_{\mu ,\nu }(z)  \sim z^{\mu  - 1} \sum\limits_{n = 0}^\infty  ( - 1)^n \frac{a_n ( - \mu ,\nu )}{z^{2n}},
\end{equation}
where
\[
a_n (\mu ,\nu ) = \prod\limits_{k = 1}^n ((\mu  + 2k - 1)^2  - \nu ^2 )  = 2^{2n} \left( \frac{\mu  + \nu  + 1}{2} \right)_n \left( \frac{\mu  - \nu  + 1}{2} \right)_n ,
\]
with the Pochhammer symbol $(w)_n = \Gamma(w + n)/\Gamma(w)$. If either of $\mu\pm\nu$ equals a positive odd integer, then the right-hand side of \eqref{eq1} terminates and represents $S_{\mu,\nu}(z)$ exactly.

Since $S_{\mu,\nu}(z)$ is analytic in the domain $| \arg z| < \pi$, the asymptotic expansion \eqref{eq1} can be differentiated term-wise and yields
\begin{equation}\label{eq2}
S'_{\mu ,\nu } (z) \sim z^{\mu  - 2} \sum\limits_{n = 0}^\infty  ( - 1)^n \frac{b_n ( - \mu ,\nu )}{z^{2n}},
\end{equation}
as $z\to \infty$ in the sector $|\arg z|\leq \pi-\delta$ with any fixed $\delta >0$. The coefficients in this expansion are given by
\[
b_n (\mu ,\nu ) = - a_n (\mu ,\nu )(\mu  + 2n + 1).
\]

The main aim of the present paper is to derive new representations and bounds for the remainders of the asymptotic expansions \eqref{eq1} and \eqref{eq2}. Thus, for $| \arg z| < \pi$, complex $\mu$ and $\nu$, and any non-negative integer $N$, we define the $N$th remainder terms $R_N^{(S)}(z,\mu,\nu)$ and $R_N^{(S')}(z,\mu,\nu)$ of the asymptotic expansions \eqref{eq1} and \eqref{eq2} via the equalities
\begin{equation}\label{eq22}
S_{\mu ,\nu }(z) = z^{\mu  - 1} \left(\sum\limits_{n = 0}^{N-1} ( - 1)^n \frac{a_n ( - \mu ,\nu )}{z^{2n}} + R_N^{(S)}(z,\mu,\nu) \right)
\end{equation}
and
\begin{equation}\label{eq76}
S'_{\mu ,\nu } (z) = z^{\mu  - 2} \left(\sum\limits_{n = 0}^{N-1} ( - 1)^n \frac{b_n ( - \mu ,\nu )}{z^{2n}} + R_N^{(S')}(z,\mu,\nu) \right).
\end{equation}
Throughout this paper, if not stated otherwise, empty sums are taken to be zero. The derivations of the estimates for $R_N^{(S)}(z,\mu,\nu)$ and $R_N^{(S')}(z,\mu,\nu)$ are based on new representations of these remainder terms.

The remainder $R_N^{(S)}(z,\mu,\nu)$ has already been investigated in an earlier paper of the author \cite{Nemes1}. The results of the paper \cite{Nemes1} are special cases or consequences of the results of the present work.

Before stating the main results in detail, we introduce some notation. We denote
\[
\Pi_p(w)=\frac{w^p}{2}\left( e^{\frac{\pi }{2}ip} e^{iw} \Gamma(1 - p,we^{\frac{\pi }{2}i} ) + e^{ - \frac{\pi }{2}ip} e^{ - iw} \Gamma ( 1 - p,we^{ - \frac{\pi }{2}i} ) \right),
\]
where $\Gamma(1 - p,w)$ is the incomplete gamma function. The function $\Pi_p(w)$ was originally introduced by Dingle \cite[pp. 407]{Dingle} and, following his convention, we refer to it as a basic terminant (but note that Dingle's notation slightly differs from ours, e.g., $\Pi_{p-1}(w)$ is used for our $\Pi_p(w)$). The basic terminant is a multivalued function of its argument $w$ and, when the argument is fixed, is an entire function of its order $p$. In this paper, we will use the integral representation \cite{Nemes2}
\begin{equation}\label{eq01}
\Pi_p (w) = \frac{1}{\Gamma (p)}\int_0^{ + \infty } \frac{t^{p - 1} e^{ - t}}{1 + (t/w)^2 }dt,
\end{equation}
which is valid when $|\arg w|<\frac{\pi}{2}$ and $\Re(p)>0$.

We shall also use the concept of the regularized hypergeometric function ${\bf F}\left( {a,b;c;w} \right)$ which is defined by the power series expansion
\[
{\bf F} ( a,b;c;w ) = \frac{1}{\Gamma ( a )\Gamma ( b )}\sum\limits_{n = 0}^\infty  \frac{\Gamma ( a + n )\Gamma ( b + n )}{\Gamma (c + n )\Gamma (n + 1)}w^n 
\]
for $\left| w \right| < 1$ and by analytic continuation elsewhere \cite[\S 15.2]{NIST}. The parameters $a$, $b$ and $c$ of this function can take arbitrary complex values.

We are now in a position to formulate our main results. In Theorem \ref{thm1}, we give a new integral representation for the remainder term $R_N^{(S)}(z,\mu,\nu)$ involving a free complex parameter $\lambda$, $\Re \left( \lambda  \right) > 0$.

\begin{theorem}\label{thm1} Let $N$ be a non-negative integer and let $\mu$, $\nu$ and $\lambda$ be arbitrary complex numbers such that $\Re(\mu)+|\Re(\nu)| < 2N +1$, $\Re (\mu ) < 2N + \Re (\lambda ) + \frac{1}{2}$ and $\Re (\lambda) > 0$. Then
\begin{gather}\label{eq6}
\begin{split}
R_N^{(S)} (z,\mu ,\nu ) = \; & ( - 1)^N \frac{2^{\mu  + \frac{1}{2}} \pi ^{\frac{1}{2}} \Gamma \left( 2N - \mu  + \lambda  + \frac{1}{2} \right)}{\Gamma \left( \frac{ - \mu  + \nu  + 1}{2} \right)\Gamma \left( \frac{ - \mu  - \nu  + 1}{2} \right)}\frac{1}{z^{2N}} \\ & \times \int_0^{ + \infty } \frac{t^{\lambda  - 1} }{(1 + t)^{2N - \mu  + \lambda  + \frac{1}{2}} }{\bf F}\left( \nu  + \frac{1}{2}, - \nu  + \frac{1}{2};\lambda ; - \frac{t}{2} \right)\Pi _{2N - \mu  + \lambda  + \frac{1}{2}} (z(1 + t))dt,
\end{split}
\end{gather}
provided $|\arg z|<\pi$.
\end{theorem}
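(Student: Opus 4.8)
The plan is to reduce the statement to its base case $N=0$ and then to generate the general-$N$ formula from it by the recurrence satisfied by the basic terminant. Indeed, inserting the finite geometric decomposition $\frac{1}{1+(t/w)^2}=\sum_{k=0}^{N-1}(-1)^k(t/w)^{2k}+(-1)^N\frac{(t/w)^{2N}}{1+(t/w)^2}$ into \eqref{eq01}, with $w=z(1+t)$ and $p=-\mu+\lambda+\frac12$, yields
\[
\Pi_p(z(1+t))=\sum_{k=0}^{N-1}(-1)^k\frac{(p)_{2k}}{(z(1+t))^{2k}}+(-1)^N\frac{(p)_{2N}}{(z(1+t))^{2N}}\Pi_{p+2N}(z(1+t)).
\]
Since $p+2N=2N-\mu+\lambda+\frac12$ and $\Gamma(p)(p)_{2N}=\Gamma(p+2N)$, substituting this into the $N=0$ representation and integrating term by term produces exactly the factor $\Gamma(2N-\mu+\lambda+\frac12)$, the weight $(1+t)^{-(2N-\mu+\lambda+\frac12)}$ and the power $z^{-2N}$ of \eqref{eq6}, while the first $N$ terms contribute a polynomial of degree $<N$ in $z^{-2}$. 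Because the residual $\Pi_{p+2N}$-integral is $O(z^{-2N})$ (as $\Pi_p\to1$), uniqueness of asymptotic expansions together with Watson's expansion \eqref{eq1} forces that polynomial to equal $\sum_{n=0}^{N-1}(-1)^na_n(-\mu,\nu)z^{-2n}$, so the remaining integral is precisely $R_N^{(S)}(z,\mu,\nu)$. Everything therefore rests on the case $N=0$.

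For the base case I would start from a classical representation of $S_{\mu,\nu}(z)$ of Laplace--Borel type, valid for $\Re(z)>0$ and for $\mu,\nu$ in the half-planes dictated by convergence, in which the latent oscillatory behaviour is carried by a resolvent kernel $1/(1+s^2)$ (its poles at $s=\pm i$ encoding the recessive $e^{\pm iz}$-contributions) and the order $\mu$ by a power weight. The decisive computation is the substitution identity $\int_0^{+\infty}\frac{s^{p-1}e^{-ws}}{1+s^2}\,ds=\frac{\Gamma(p)}{w^p}\Pi_p(w)$, immediate from \eqref{eq01}; applied with $w=z(1+t)$ it manufactures the factor $\Pi_{-\mu+\lambda+\frac12}(z(1+t))$ together with its companion weight $(1+t)^{-(-\mu+\lambda+\frac12)}$ and the constant $\Gamma(-\mu+\lambda+\frac12)$. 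The $\nu$-dependence enters through ${\bf F}(\nu+\frac12,-\nu+\frac12;\lambda;-t/2)$, whose numerator parameters combine as $(\nu+\frac12)(-\nu+\frac12)=\frac14-\nu^2$ and thus reproduce the Bessel-type factors of $a_n(-\mu,\nu)=\prod_{k=1}^n((2k-1-\mu)^2-\nu^2)$; the free parameter $\lambda$ together with the weight $t^{\lambda-1}$ is introduced by an Euler--Beta integral identity for the regularized hypergeometric function, which is precisely what ties $\lambda$ simultaneously to the hypergeometric parameter, to the exponent of $(1+t)$, and to the power of $t$, and which also explains why the right-hand side is independent of the admissible choice of $\lambda$.

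It remains to verify convergence and to extend the range of validity. At $t=0$ the integrand behaves like $t^{\lambda-1}$, integrable exactly when $\Re(\lambda)>0$; as $t\to+\infty$ one has $\Pi_p\to1$, ${\bf F}(\nu+\frac12,-\nu+\frac12;\lambda;-t/2)=O(t^{|\Re(\nu)|-\frac12})$ and $t^{\lambda-1}(1+t)^{-p}=O(t^{\Re(\mu)-2N-\frac32})$, so the integrand is $O(t^{\Re(\mu)+|\Re(\nu)|-2N-2})$ and the tail converges precisely under $\Re(\mu)+|\Re(\nu)|<2N+1$, whereas $\Re(\mu)<2N+\Re(\lambda)+\frac12$ is nothing but the condition $\Re(p)>0$ demanded by \eqref{eq01} and by $\Gamma(p)$. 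Finally I would pass from the sector of convergence of the underlying Laplace representation to the full cut plane $|\arg z|<\pi$ by analytic continuation, rotating the $s$-contour and invoking the analyticity of $\Pi_p$ off its branch cut, and then relax the parameter restrictions by continuation in $\mu$, $\nu$ and $\lambda$, the stated conditions being open. I expect the genuine obstacles to be (i) extracting the exact normalising constant $2^{\mu+\frac12}\pi^{\frac12}/[\Gamma(\frac{-\mu+\nu+1}{2})\Gamma(\frac{-\mu-\nu+1}{2})]$ from the Euler--Beta and moment computation in the base case, and (ii) justifying the contour rotation needed to reach $\frac{\pi}{2}\le|\arg z|<\pi$ while controlling $\Pi_p(z(1+t))$ uniformly in $t$.
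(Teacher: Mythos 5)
Your proposal is correct in substance, and its analytic core coincides with the paper's proof; the difference is a detour that the paper avoids. The paper starts from the Stieltjes-transform representation \eqref{eq3} -- which is valid for \emph{general} $N$ under $\Re(\mu)+|\Re(\nu)|<2N+1$ -- substitutes for $K_\nu$ inside it the Laplace-transform representation
\[
K_\nu  (u) = \left( \frac{\pi }{2u} \right)^{\frac{1}{2}} e^{ - u} u^\lambda  \int_0^{ + \infty } e^{ - ut} t^{\lambda  - 1} {\bf F}\left( \nu  + \tfrac{1}{2}, - \nu  + \tfrac{1}{2};\lambda ; - \tfrac{t}{2} \right)dt, \qquad \Re(\lambda)>0
\]
(cited from \cite{Oberhettinger}), interchanges the two integrations (justified by Lemma \ref{lemma1}), evaluates the inner $u$-integral as $\Gamma\left(2N-\mu+\lambda+\tfrac{1}{2}\right)(1+t)^{-(2N-\mu+\lambda+\frac{1}{2})}\Pi_{2N-\mu+\lambda+\frac{1}{2}}(z(1+t))$ via \eqref{eq01}, and finally extends from $|\arg z|<\frac{\pi}{2}$ to $|\arg z|<\pi$ by analytic continuation using the boundedness of the terminant. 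Your base case $N=0$ is exactly this computation: your ``Laplace--Borel representation with resolvent kernel $1/(1+s^2)$'' is \eqref{eq3} with $N=0$, and your ``substitution identity'' is the same inner-integral evaluation. What you do differently is to treat only $N=0$ this way and then climb to general $N$ through the terminant recurrence plus uniqueness of asymptotic power series; since the $N=0$ input forces the stronger restriction $\Re(\mu)+|\Re(\nu)|<1$, you must afterwards also continue analytically in $(\mu,\nu,\lambda)$ to reach the stated hypotheses. That detour is sound (your residual is indeed $\mathcal{O}(z^{-2N})$ along a ray, a polynomial in $z^{-2}$ of degree $<N$ that is $\mathcal{O}(z^{-2N})$ vanishes, and the parameter domain is convex so the identity theorem applies), but it is rendered unnecessary by the fact that \eqref{eq3} holds for every $N$: using it directly, as the paper does, removes both the recurrence step and the parameter continuation.

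Three smaller points. First, the one load-bearing ingredient your outline never pins down is precisely the displayed formula for $K_\nu$: your ``Euler--Beta integral identity for the regularized hypergeometric function'' has to be this representation, since it is what simultaneously injects ${\bf F}$, the weight $t^{\lambda-1}$ and the free parameter $\lambda$; once it is written down, your anticipated obstacle (i) evaporates, the normalising constant being just $2^{\mu+1}\cdot(\pi/2)^{\frac{1}{2}}=2^{\mu+\frac{1}{2}}\pi^{\frac{1}{2}}$ against the gamma factors already present in \eqref{eq3}. Second, Lemma \ref{lemma1} gives ${\bf F}=\mathcal{O}\big((1+t)^{|\Re(\nu)|-\frac{1}{2}}\log(1+t)\big)$ at infinity; the logarithm you dropped is harmless only because $\Re(\mu)+|\Re(\nu)|<2N+1$ is strict, but it should appear in the domination argument that licenses the interchange of integrals. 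Third, your obstacle (ii) is not real: no contour rotation is needed, because $\Pi_p(z(1+t))$ is bounded uniformly in $t\geq 0$ for $z$ in compact subsets of $|\arg z|<\pi$, so the right-hand side of \eqref{eq6} is already analytic in the whole cut plane and agreement with the left-hand side on $|\arg z|<\frac{\pi}{2}$ suffices.
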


In the special case that $\lambda  =  - \nu  + \frac{1}{2}$, formula \eqref{eq6} was also derived by Dingle (\cite[eq. (9)]{Dingle0}, \cite[eq. (45), p. 442]{Dingle}) using formal, non-rigorous methods.

If $\lambda=0$, a slight modification of \eqref{eq6} holds.

\begin{theorem}\label{thm2} Let $N$ be a non-negative integer and let $\mu$ and $\nu$ be arbitrary complex numbers such that $\Re(\mu)+|\Re(\nu)| < 2N +1$ and $\Re (\mu ) < 2N + \frac{1}{2}$. Then
\begin{gather}\label{eq33}
\begin{split}
& R_N^{(S)} (z,\mu ,\nu ) = ( - 1)^N \frac{2^{\mu  + \frac{1}{2}} \pi ^{\frac{1}{2}} \Gamma \left( 2N - \mu  + \frac{1}{2} \right)}{\Gamma \left( \frac{ - \mu  + \nu  + 1}{2} \right)\Gamma \left( \frac{ - \mu  - \nu  + 1}{2} \right)}\frac{1}{z^{2N} } \\ & \times \left( \Pi _{2N - \mu  + \frac{1}{2}} (z) + \int_0^{ + \infty } \frac{t^{ - 1} }{(1 + t)^{2N - \mu  + \frac{1}{2}} }{\bf F}\left( {\nu  + \frac{1}{2}, - \nu  + \frac{1}{2};0; - \frac{t}{2}} \right)\Pi _{2N - \mu  + \frac{1}{2}} (z(1 + t))dt  \right),
\end{split}
\end{gather}
provided $|\arg z|<\pi$.
\end{theorem}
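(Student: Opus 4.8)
The plan is to obtain \eqref{eq33} as the limiting case $\lambda\to0^+$ of Theorem \ref{thm1}. Since the remainder $R_N^{(S)}(z,\mu,\nu)$ is defined through \eqref{eq22} independently of $\lambda$, the right-hand side of \eqref{eq6} is in fact constant in $\lambda$ throughout the region $\Re(\lambda)>0$, so I am free to let $\lambda$ decrease to $0$ along the positive reals. Note that the parameter restrictions of Theorem \ref{thm1} degenerate exactly into those of Theorem \ref{thm2} as $\lambda\to0^+$, so the two sets of hypotheses are compatible.

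The first step is to isolate the source of the singular behaviour as $\lambda\to0^+$. Writing $p=2N-\mu+\lambda+\frac12$, $a=\nu+\frac12$ and $b=-\nu+\frac12$, I split the regularized hypergeometric factor into its $n=0$ term and the rest, ${\bf F}(a,b;\lambda;-\frac t2)=\frac1{\Gamma(\lambda)}+G_\lambda(t)$, where $G_\lambda(t)=O(t)$ as $t\to0^+$ because every surviving term of the series carries a factor $(-t/2)^n$ with $n\geq1$. Correspondingly the integral in \eqref{eq6} decomposes as $I_1+I_2$. In $I_2$ the factor $t^{\lambda-1}G_\lambda(t)=O(t^{\lambda})$ is integrable at the origin uniformly for small $\lambda\geq0$, while the condition $\Re(\mu)+|\Re(\nu)|<2N+1$ guarantees convergence at $t=+\infty$, where ${\bf F}(a,b;\lambda;-\frac t2)$ decays like $t^{-\frac12+|\Re(\nu)|}$. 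Using $\frac1{\Gamma(\lambda)}\to0$ together with the continuity of ${\bf F}(a,b;c;w)$ and of $\Pi_p(w)$ in their parameters, dominated convergence shows that $\Gamma(p)I_2$ tends to $\Gamma(2N-\mu+\frac12)$ times the integral appearing in \eqref{eq33}.

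It remains to analyse the singular piece $\Gamma(p)I_1=\frac{\Gamma(p)}{\Gamma(\lambda)}\int_0^{+\infty}\frac{t^{\lambda-1}}{(1+t)^{p}}\Pi_p(z(1+t))\,dt$. Here I would add and subtract $\Pi_p(z)$: by the Beta integral $\int_0^{+\infty}t^{\lambda-1}(1+t)^{-p}\,dt=\Gamma(\lambda)\Gamma(p-\lambda)/\Gamma(p)$ and the identity $p-\lambda=2N-\mu+\frac12$, the constant part contributes exactly $\Gamma(2N-\mu+\frac12)\Pi_p(z)$, which tends to $\Gamma(2N-\mu+\frac12)\Pi_{2N-\mu+\frac12}(z)$ by continuity of $\Pi_p$ in $p$. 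For the remaining difference I use that $\Pi_p(z(1+t))-\Pi_p(z)=O(t)$ near $t=0$ and that $\Pi_p$ stays bounded as its argument tends to infinity, so the corresponding integral is bounded as $\lambda\to0^+$; since $\Gamma(p)/\Gamma(\lambda)=O(\lambda)$, this contribution vanishes in the limit. Adding the limits of $\Gamma(p)I_1$ and $\Gamma(p)I_2$ and reinstating the $\lambda$-independent prefactor of \eqref{eq6} yields precisely \eqref{eq33}.

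I expect the main obstacle to be the rigorous justification of the two interchanges of limit and integration: producing a $\lambda$-uniform integrable majorant near $t=0$, where the competing behaviours $t^{\lambda-1}$ and $1/\Gamma(\lambda)$ must be reconciled, and controlling the algebraic tail at $t=+\infty$, which is exactly where the hypotheses $\Re(\mu)+|\Re(\nu)|<2N+1$ and $\Re(\mu)<2N+\frac12$ enter. The delta-type concentration of the measure $\frac{\Gamma(p)}{\Gamma(\lambda)}t^{\lambda-1}(1+t)^{-p}\,dt$ at the origin, which is what produces the extra boundary term $\Pi_{2N-\mu+\frac12}(z)$ absent from \eqref{eq6}, is the conceptual heart of the argument.
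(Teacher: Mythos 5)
Your proposal is correct and is essentially the paper's own proof: the paper likewise obtains \eqref{eq33} as the limit $\lambda\to0^+$ of \eqref{eq6}, extracting the boundary term $\Pi_{2N-\mu+\frac{1}{2}}(z)$ through the same beta-integral computation (using $p-\lambda=2N-\mu+\frac{1}{2}$) and treating the remaining contribution by dominated convergence, with exactly the Taylor expansion at $t=0$, the tail estimates of Lemma~\ref{lemma1} under the two hypotheses on $\mu,\nu$, and the fact that $1/\Gamma(\lambda)\to0$ that you invoke. The only cosmetic difference is bookkeeping: the paper adds and subtracts $\frac{1}{\Gamma(\lambda)}\Pi_{2N-\mu+\lambda+\frac{1}{2}}(z)$ inside the integrand (two pieces), whereas you split ${\bf F}$ into its $n=0$ term plus remainder and then add and subtract $\Pi_p(z)$ (three pieces).
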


Analogous expressions for the remainder term $R_N^{(S')}(z,\mu,\nu)$ can be written down by applying Theorems \ref{thm1} and \ref{thm2} together with the functional equation
\begin{equation}\label{eq20}
2R_N^{(S')} (z,\mu ,\nu ) = (\mu  + \nu  - 1)R_N^{(S)} (z,\mu  - 1,\nu  - 1) + (\mu  - \nu  - 1)R_N^{(S)} (z,\mu  - 1,\nu  + 1).
\end{equation}
This functional equation follows directly from the connection formula $2S'_{\mu ,\nu } (z) = (\mu  + \nu  - 1)S_{\mu  - 1,\nu  - 1} (z) + (\mu  - \nu  - 1)S_{\mu  - 1,\nu  + 1} (z)$ (see, e.g., \cite[eq. (10), p. 348]{Watson}).

It was shown by the present author \cite{Nemes1} that for any non-negative integer $N$, the remainder term $R_N^{(S)} (z,\mu ,\nu )$ can be expressed in terms of the modified Bessel function as
\begin{equation}\label{eq3}
R_N^{(S)} (z,\mu ,\nu ) = ( - 1)^N \frac{2^{\mu  + 1} }{\Gamma \left( \frac{ - \mu  + \nu  + 1}{2} \right)\Gamma \left( \frac{ - \mu  - \nu  + 1}{2} \right)}\frac{1}{z^{2N}}\int_0^{ + \infty }\frac{t^{2N - \mu } K_\nu  (t)}{1 + (t/z)^2 }dt ,
\end{equation}
provided that $|\arg z|<\frac{\pi}{2}$ and $\Re(\mu)+|\Re(\nu)| < 2N +1$. This representation of $R_N^{(S)} (z,\mu ,\nu )$ is the central tool of the paper \cite{Nemes1} in the asymptotic analysis of the Lommel function $S_{\mu,\nu}(z)$, in particular, in the derivation of bounds for $R_N^{(S)} (z,\mu ,\nu )$. In the following theorem, we present a result for the remainder term $R_N^{(S')} (z,\mu ,\nu )$ analogous to \eqref{eq3}. In our analysis, the purpose of these representations is not to find bounds for the remainders themselves but to prove error bounds for the re-expansions of these remainders.

\begin{theorem}\label{thm3} Let $N$ be a non-negative integer and let $\mu$ and $\nu$ be arbitrary complex numbers such that $\Re(\mu)+|\Re(\nu)| < 2N +1$. Then
\begin{equation}\label{eq21}
R_N^{(S')} (z,\mu ,\nu ) = ( - 1)^N \frac{2^{\mu  + 1} }{\Gamma \left( \frac{ - \mu  + \nu  + 1}{2} \right)\Gamma \left( \frac{ - \mu  - \nu  + 1}{2} \right)}\frac{1}{z^{2N}}\int_0^{ + \infty } \frac{t^{2N - \mu  + 1} K'_\nu  (t)}{1 + (t/z)^2 }dt,
\end{equation}
provided $|\arg z|<\frac{\pi}{2}$.
\end{theorem}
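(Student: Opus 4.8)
The plan is to combine the functional equation \eqref{eq20} with the integral representation \eqref{eq3}, reducing everything to a single integral whose integrand is governed by a recurrence of modified Bessel functions. First I would substitute $(\mu,\nu)\mapsto(\mu-1,\nu-1)$ and $(\mu,\nu)\mapsto(\mu-1,\nu+1)$ into \eqref{eq3}. Both substitutions produce the same prefactor $2^{\mu}$, the same power $t^{2N-\mu+1}$, the same Cauchy-type denominator $1+(t/z)^2$ and the same factor $z^{-2N}$, so the two contributions are structurally identical apart from their Gamma factors and the index of the Bessel function ($K_{\nu-1}$ versus $K_{\nu+1}$). Before substituting, I would check that the hypothesis $\Re(\mu)+|\Re(\nu)|<2N+1$ guarantees the convergence conditions $\Re(\mu-1)+|\Re(\nu\mp 1)|<2N+1$ required for \eqref{eq3}; this is immediate from the estimate $|\Re(\nu\mp1)|\leq|\Re(\nu)|+1$.

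The next step is to simplify the Gamma factors using the recurrence $\Gamma(w+1)=w\Gamma(w)$. Writing the target denominator as $\Gamma\!\left(\frac{-\mu+\nu+1}{2}\right)\Gamma\!\left(\frac{-\mu-\nu+1}{2}\right)$, the $(\mu-1,\nu-1)$ term carries the shifted factor $\Gamma\!\left(\frac{-\mu-\nu+3}{2}\right)=\frac{-\mu-\nu+1}{2}\,\Gamma\!\left(\frac{-\mu-\nu+1}{2}\right)$, while the $(\mu-1,\nu+1)$ term carries $\Gamma\!\left(\frac{-\mu+\nu+3}{2}\right)=\frac{-\mu+\nu+1}{2}\,\Gamma\!\left(\frac{-\mu+\nu+1}{2}\right)$. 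The decisive observation is that the numerical coefficients in \eqref{eq20} are precisely matched to these shifts: since $\mu+\nu-1=-2\cdot\frac{-\mu-\nu+1}{2}$ and $\mu-\nu-1=-2\cdot\frac{-\mu+\nu+1}{2}$, multiplying each term of \eqref{eq20} by its coefficient cancels the extra linear factor and restores the common denominator $\Gamma\!\left(\frac{-\mu+\nu+1}{2}\right)\Gamma\!\left(\frac{-\mu-\nu+1}{2}\right)$. After this cancellation both terms acquire the identical prefactor $-(-1)^N 2^{\mu+1}z^{-2N}/\bigl[\Gamma\!\left(\frac{-\mu+\nu+1}{2}\right)\Gamma\!\left(\frac{-\mu-\nu+1}{2}\right)\bigr]$.

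At this stage the right-hand side of \eqref{eq20} is a single integral with integrand proportional to $t^{2N-\mu+1}\bigl(K_{\nu-1}(t)+K_{\nu+1}(t)\bigr)/(1+(t/z)^2)$. I would then invoke the recurrence $K_{\nu-1}(t)+K_{\nu+1}(t)=-2K'_\nu(t)$ for the modified Bessel function, which collapses the sum into $-2K'_\nu(t)$. Dividing by the factor $2$ on the left-hand side of \eqref{eq20} and tracking the sign introduced by the recurrence then reproduces \eqref{eq21} exactly.

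I do not anticipate a genuine obstacle here: the argument is a direct manipulation of \eqref{eq20} and \eqref{eq3}, and the only points demanding care are the bookkeeping of the Gamma-function shifts and the verification that the parameter restriction of \eqref{eq3} survives the shifts $\nu\mapsto\nu\pm1$. The validity range $|\arg z|<\frac{\pi}{2}$ carries over unchanged from \eqref{eq3}, since no analytic continuation in $z$ is required.
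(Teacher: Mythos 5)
Your proposal is correct and follows exactly the paper's own route: the paper proves \eqref{eq21} by combining the integral representation \eqref{eq3} with the functional relation \eqref{eq20} and the recurrence $K_{\nu-1}(t)+K_{\nu+1}(t)=-2K'_\nu(t)$, which is precisely your argument with the Gamma-function bookkeeping and the parameter-condition check written out in full.
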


The subsequent theorem provides bounds for the remainders $R_N^{(S)} (z,\mu ,\nu )$ and $R_N^{(S')} (z,\mu ,\nu )$ when $\mu$ and $\nu$ are real. These error bounds may be further simplified by employing the various estimates for $\mathop {\sup }\nolimits_{r \geq 1} | \Pi _{p} (zr) |$ given in Appendix \ref{appendixa}. Note that those estimates in Appendix \ref{appendixa} that depend on the (positive) order $p$ are monotonically increasing functions of $p$. Therefore, in the following theorem, we minimize, as much as our methods allow, the order of the basic terminants with respect to the following natural requirement: the form of each bound is directly related to the first omitted term of the corresponding asymptotic expansion. We remark that the bound \eqref{eq27} remains true even if the order of the basic terminant is taken to be any positive quantity at least $2N - \mu  + \frac{3}{2} +\max \left( 0,\frac{1}{2} - \left| \nu  \right| \right)$.

\begin{theorem}\label{thm4}
Let $N$ be a non-negative integer and let $\mu$, $\nu$ and $\lambda$ be arbitrary real numbers such that $\mu+|\nu| < 2N +1$, $\mu < 2N +\lambda+ \frac{1}{2}$ and $\lambda  \ge \max \left( 0,\frac{1}{2} - \left| \nu  \right| \right)$. Then
\begin{equation}\label{eq60}
\big| R_N^{(S)} (z,\mu ,\nu ) \big| \le \frac{\left| a_N ( - \mu ,\nu ) \right|}{\left| z \right|^{2N} }\mathop {\sup }\limits_{r \ge 1} \big| \Pi _{2N - \mu  + \lambda + \frac{1}{2}} (zr) \big|,
\end{equation}
provided $|\arg z|<\pi$. Similarly, let $N$ be a non-negative integer and let $\mu$ and $\nu$ be arbitrary real numbers such that $\mu+|\nu| < 2N +1$. Then
\begin{equation}\label{eq27}
\big| R_N^{(S')} (z,\mu ,\nu ) \big| \le \frac{\left| b_N ( - \mu ,\nu ) \right|}{\left| z \right|^{2N} }\mathop {\sup }\limits_{r \ge 1} \big| \Pi _{2N - \mu  + \frac{3}{2}} (zr) \big|,
\end{equation}
provided $|\arg z|<\pi$.
\end{theorem}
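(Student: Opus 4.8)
The plan is to obtain both estimates from the basic-terminant integral representations, using that for real parameters the hypergeometric kernel in the integrand keeps a fixed sign, so that the supremum of the basic terminant can be pulled out and the leftover constant identified with the first omitted term. For \eqref{eq60} I would begin with representation \eqref{eq6} of Theorem \ref{thm1}, writing $p=2N-\mu+\lambda+\frac12$. Since $1+t\ge1$ for $t\ge0$, every value $\Pi_p(z(1+t))$ is of the form $\Pi_p(zr)$ with $r\ge1$, whence $|\Pi_p(z(1+t))|\le\sup_{r\ge1}|\Pi_p(zr)|$, a quantity independent of $t$ that factors out of the integral. It then remains to show that the remaining integrand $t^{\lambda-1}(1+t)^{-p}{\bf F}(\nu+\frac12,-\nu+\frac12;\lambda;-t/2)$ does not change sign on $(0,\infty)$, for then the modulus of the integral equals the modulus of the integral of the signed integrand. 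As $t^{\lambda-1}$ and $(1+t)^{-p}$ are positive, the sign is governed entirely by the regularized hypergeometric factor.

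Proving positivity of that factor is the technical core of the first bound. By symmetry of ${\bf F}$ in its first two parameters I may assume $\nu\ge0$. Applying the Pfaff transformation ${\bf F}(a,b;c;w)=(1-w)^{-a}{\bf F}(a,c-b;c;w/(w-1))$ with $a=\nu+\frac12$, $b=-\nu+\frac12$, $c=\lambda$, $w=-t/2$, the new argument $w/(w-1)=t/(t+2)$ lies in $[0,1)$ and the prefactor $(1+t/2)^{-(\nu+\frac12)}$ is positive; the transformed function ${\bf F}(\nu+\frac12,\lambda+\nu-\frac12;\lambda;t/(t+2))$ is a power series whose coefficients are all nonnegative exactly when $\nu+\frac12>0$ (automatic) and $c-b=\lambda+\nu-\frac12\ge0$, which is precisely what $\lambda\ge\max(0,\frac12-|\nu|)$ secures. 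Hence the factor is positive. I would then identify the resulting constant without evaluating the integral: letting $z\to+\infty$ along the positive axis, $\Pi_p(z(1+t))\to1$ with $0<\Pi_p\le1$ furnishing a dominating bound via \eqref{eq01}, dominated convergence gives that the prefactor times $\int_0^\infty t^{\lambda-1}(1+t)^{-p}{\bf F}\,dt$ equals $\lim_{z\to+\infty}z^{2N}R_N^{(S)}(z,\mu,\nu)=(-1)^N a_N(-\mu,\nu)$, the first omitted term of \eqref{eq1}. Taking moduli yields \eqref{eq60}.

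For \eqref{eq27} I would substitute into the functional equation \eqref{eq20} the $\lambda=0$ representation \eqref{eq33} of Theorem \ref{thm2} applied to the pairs $(\mu-1,\nu-1)$ and $(\mu-1,\nu+1)$; each contributes basic terminants of the common order $q=2N-\mu+\frac32$. A direct rearrangement of the Gamma-factors (using $\Gamma(x+1)=x\Gamma(x)$ to reconcile the shifted denominators with those of $R_N^{(S)}(z,\mu,\nu)$, together with $(\mu+\nu-1)=-(1-\mu-\nu)$ and $(\mu-\nu-1)=-(1-\mu+\nu)$) collapses the two expressions into a single one of the form $R_N^{(S')}(z,\mu,\nu)=-K\big(2\Pi_q(z)+\int_0^\infty t^{-1}(1+t)^{-q}\tilde H(t)\Pi_q(z(1+t))\,dt\big)$ for an explicit constant $K$, with kernel $\tilde H(t)={\bf F}(\nu-\frac12,-\nu+\frac32;0;-t/2)+{\bf F}(\nu+\frac32,-\nu-\frac12;0;-t/2)$. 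Bounding each $\Pi_q$ by $\sup_{r\ge1}|\Pi_q(zr)|$ and identifying the constant through the same $z\to+\infty$ limit, now matching $(-1)^N b_N(-\mu,\nu)$ (one checks that the collapsed coefficients indeed reproduce $b_N$), reduces \eqref{eq27} to the single claim that $\tilde H(t)\ge0$ on $(0,\infty)$.

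The main obstacle is exactly this last sign statement, and with it the attainment of the optimal order $q=2N-\mu+\frac32$. For $\nu$ in the ranges $(\frac12,\frac32)$ and $(-\frac32,-\frac12)$ the Pfaff nonnegativity criterion fails for one of the two summands, so the individual hypergeometrics need not be of one sign; the point is that the combination prescribed by the functional equation restores a fixed sign, mirroring the fact that $R_N^{(S')}$ also has the representation \eqref{eq21} built from the single, strictly negative factor $K'_\nu(t)$. I would therefore establish $\tilde H\ge0$ either by exhibiting $\tilde H$ as the hypergeometric transform of $-K'_\nu$ that inherits its sign, or by a contiguous-relation identity rewriting $\tilde H$ as one regularized hypergeometric satisfying the Pfaff criterion for every real $\nu$; the ranges where one summand is already sign-definite are immediate, and the borderline ranges are where the real work lies. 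The weaker assertion in the remark, with order $2N-\mu+\frac32+\max(0,\frac12-|\nu|)$, would follow more cheaply by permitting $\lambda>0$ in one invocation and invoking the monotonicity in the order of the suprema collected in Appendix \ref{appendixa}.
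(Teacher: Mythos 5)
Your proposal follows the same route as the paper's own proof: for \eqref{eq60}, start from \eqref{eq6}, majorize the terminant by $\sup_{r\ge1}|\Pi_p(zr)|$ (since $1+t\ge1$), use sign-definiteness of the hypergeometric kernel so that the modulus of the leftover integral equals the integral itself, and identify the resulting constant with $a_N(-\mu,\nu)$; for \eqref{eq27}, substitute the $\lambda=0$ representation \eqref{eq33} into \eqref{eq20} — your collapsed expression is exactly the paper's \eqref{eq40} — and repeat with $b_N(-\mu,\nu)$. Two ingredients are obtained differently, and both of your alternatives are sound. First, you identify the constants by letting $z\to+\infty$ along the positive axis (dominated convergence, $\Pi_p\to1$, and the known expansion \eqref{eq1}), whereas the paper derives the integral representations \eqref{eq30} and \eqref{eq32} algebraically, from the relation between consecutive remainders and the terminant identity $p(p+1)\Pi_{p+2}(w)=w^2(1-\Pi_p(w))$; your limit argument is a clean substitute. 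Second, the nonnegativity of ${\bf F}(\nu+\tfrac12,-\nu+\tfrac12;\lambda;-t/2)$, which the paper simply imports as Lemma \ref{lemma2} from \cite{Nemes2}, you prove outright via the Pfaff transformation, and that argument is correct: with $\nu\ge0$ the transformed series has nonnegative coefficients precisely when $\lambda+\nu-\tfrac12\ge0$, which is what the hypothesis on $\lambda$ secures.

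The one genuine hole is the nonnegativity of $\tilde H(t)={\bf F}(\nu-\tfrac12,-\nu+\tfrac32;0;-t/2)+{\bf F}(\nu+\tfrac32,-\nu-\tfrac12;0;-t/2)$, which you correctly isolate as the crux of \eqref{eq27}, correctly locating the hard ranges $\tfrac12<|\nu|<\tfrac32$ where neither summand is separately sign-definite; but you sketch two repair strategies without carrying either out. This statement is precisely the paper's Lemma \ref{lemma3}, which the paper itself does not prove here but cites from \cite[Lemma 3.2]{Nemes2}; so relative to the paper your argument is complete modulo the same external fact, while as a standalone proof it is unfinished at exactly this point. Moreover, your first suggested repair (letting $\tilde H$ "inherit" the sign of $-K'_\nu$ through the Laplace-transform correspondence) cannot work as stated: positivity is not preserved under inverse Laplace transformation — positivity of the kernel is equivalent, by Bernstein's theorem, to complete monotonicity of the corresponding exponentially rescaled Bessel combination, which is essentially the assertion to be proved, not a consequence of $-K'_\nu>0$. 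The contiguous-relation route (rewriting $\tilde H$, e.g.\ via ${\bf F}(a,b;0;w)=ab\,w\,{\bf F}(a+1,b+1;2;w)$, into a combination to which the Pfaff criterion applies) is the viable one and is in the spirit of how this fact is established in \cite{Nemes2}.
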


It is known that in the special case when $z$ is positive and both $\mu$ and $\nu$ are real, $\mu +|\nu|<2N+1$, we have
\begin{equation}\label{eq79}
R_N^{(S)} (z,\mu ,\nu ) = \frac{a_N ( - \mu ,\nu ) }{z^{2N}} \theta_N^{(S)}(z,\mu,\nu).
\end{equation}
Here $0<\theta_N^{(S)}(z,\mu,\nu)<1$ is an appropriate number that depends on $z$, $\mu$, $\nu$ and $N$ (see \cite{Nemes1}). Said differently, the remainder term
$R_N^{(S)} (z,\mu ,\nu )$ does not exceed the corresponding first neglected term in absolute value and has the same sign provided that $z > 0$ and $\mu +|\nu|<2N+1$. In the theorem below, we present the analogous result for the remainder term $R_N^{(S')} (z,\mu ,\nu )$.

\begin{theorem}\label{thm5}
Let $N$ be a non-negative integer, $z$ be a positive real number, and let $\mu$ and $\nu$ be arbitrary real numbers such that $\mu +|\nu|<2N+1$. Then
\begin{equation}\label{eq41}
R_N^{(S')} (z,\mu ,\nu ) = \frac{b_N ( - \mu ,\nu ) }{z^{2N}} \theta_N^{(S')}(z,\mu,\nu),
\end{equation}
where $0<\theta_N^{(S')}(z,\mu,\nu)<1$ is a suitable number that depends on $z$, $\mu$, $\nu$ and $N$.
\end{theorem}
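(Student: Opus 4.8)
The plan is to prove \eqref{eq41} by repeating, for the derivative, the mean-value argument that underlies the known identity \eqref{eq79}, now taking as its starting point the integral representation \eqref{eq21} of Theorem \ref{thm3} in place of \eqref{eq3}. Since $z$ is a positive real number, the restriction $|\arg z|<\frac{\pi}{2}$ of Theorem \ref{thm3} holds automatically, and \eqref{eq21} is valid under the sole hypothesis $\mu+|\nu|<2N+1$ assumed here. Thus $z^{2N}R_N^{(S')}(z,\mu,\nu)$ equals a fixed constant times the integral $\int_0^{+\infty} w(t)g(t)\,dt$, where $w(t)=(1+(t/z)^2)^{-1}$ and $g(t)=t^{2N-\mu+1}K_\nu'(t)$.

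The next step is to exploit the two structural properties of this integrand. For $z>0$ the weight satisfies $0<w(t)<1$ for every $t>0$. The remaining factor $g$ keeps a constant sign on $(0,+\infty)$: for real $\nu$ we have $K_\nu(t)=K_{|\nu|}(t)>0$ and $K_\nu'(t)=-\frac{1}{2}\big(K_{\nu-1}(t)+K_{\nu+1}(t)\big)<0$, so $g(t)<0$ throughout. Consequently $\int_0^{+\infty} w(t)g(t)\,dt$ lies strictly between $\int_0^{+\infty} g(t)\,dt$ and $0$, and writing
\[
\theta_N^{(S')}(z,\mu,\nu)=\frac{\int_0^{+\infty} w(t)g(t)\,dt}{\int_0^{+\infty} g(t)\,dt}
\]
yields a number with $0<\theta_N^{(S')}(z,\mu,\nu)<1$. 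This is the exact analogue, with $K_\nu$ replaced by $K_\nu'$, of the mean-value step behind \eqref{eq79}.

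It then remains to evaluate the unweighted integral and to confirm that, multiplied by the constant from \eqref{eq21}, it reproduces the coefficient $b_N(-\mu,\nu)$ of the first neglected term in \eqref{eq76}. I would integrate by parts to get $\int_0^{+\infty} t^{2N-\mu+1}K_\nu'(t)\,dt=-(2N-\mu+1)\int_0^{+\infty} t^{2N-\mu}K_\nu(t)\,dt$ and then apply the Mellin transform $\int_0^{+\infty} t^{s-1}K_\nu(t)\,dt=2^{s-2}\Gamma\big(\frac{s-\nu}{2}\big)\Gamma\big(\frac{s+\nu}{2}\big)$ with $s=2N-\mu+1$; rewriting the two gamma factors as the Pochhammer symbols $\big(\frac{-\mu+\nu+1}{2}\big)_N$ and $\big(\frac{-\mu-\nu+1}{2}\big)_N$ recovers $a_N(-\mu,\nu)$, and the prefactor $-(2N-\mu+1)$ turns this into $b_N(-\mu,\nu)=-a_N(-\mu,\nu)(2N-\mu+1)$, exactly as in the treatment of \eqref{eq79}. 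Substituting back into \eqref{eq21} gives \eqref{eq41} with the $\theta_N^{(S')}$ defined above. The only delicate point, and the one place where the full force of $\mu+|\nu|<2N+1$ is needed, is the justification of the integration by parts and the convergence of the integrals: the exponential decay of $K_\nu$ disposes of the endpoint at $+\infty$, while at $0^+$ one has $t^{2N-\mu+1}K_\nu(t)=O(t^{2N-\mu+1-|\nu|})\to0$ precisely because $\mu+|\nu|<2N+1$ (with $\mu<2N+1$ covering the logarithmic case $\nu=0$), and the same inequality secures $\Re(s)>|\nu|$ so that the Mellin transform applies. I expect no obstacle beyond this endpoint bookkeeping, the argument being deliberately parallel to the established proof of \eqref{eq79}.
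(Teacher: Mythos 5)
Your overall strategy is sound, and it is a genuinely different route from the paper's. The paper never touches \eqref{eq21} for this purpose: it substitutes the $\lambda=0$ representation \eqref{eq33} of Theorem \ref{thm2} into the connection formula \eqref{eq20} to obtain \eqref{eq40}, invokes Lemma \ref{lemma3} (non-negativity of the sum of the two regularized hypergeometric functions), combines this with $0<\Pi_p(w)<1$ for $w,p>0$ (Proposition \ref{prop1}) and the mean value theorem of integration, and finally identifies the resulting constant via the coefficient formula \eqref{eq32}. You instead run the same mean-value argument directly on the Bessel-type representation \eqref{eq21} of Theorem \ref{thm3}, with the positivity input supplied by the elementary facts $K_\nu(t)=K_{|\nu|}(t)>0$ and $2K_\nu'(t)=-(K_{\nu-1}(t)+K_{\nu+1}(t))<0$ for real $\nu$, and you identify the constant by integration by parts plus the Mellin transform of $K_\nu$. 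This is precisely the analogue, for the derivative, of the proof of \eqref{eq79} in \cite{Nemes1}; it is more self-contained (no hypergeometric machinery, no terminant properties), whereas the paper's route recycles objects (\eqref{eq40}, \eqref{eq32}, Lemma \ref{lemma3}) that it needs anyway for the complex-parameter bound \eqref{eq27}. Your convergence bookkeeping (the role of $\mu+|\nu|<2N+1$ at $t=0^+$, the logarithmic case $\nu=0$, the condition for the Mellin transform) and the integration by parts are correct.

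The one concrete error is the closing claim that ``substituting back into \eqref{eq21} gives \eqref{eq41}'': this drops the factor $(-1)^N$ standing in front of \eqref{eq21}. Tracking it, your computation yields $R_N^{(S')}(z,\mu,\nu)=(-1)^N b_N(-\mu,\nu)z^{-2N}\theta_N^{(S')}(z,\mu,\nu)$ with $\theta_N^{(S')}\in(0,1)$, and the sign $(-1)^N$ cannot be absorbed into $\theta_N^{(S')}$. Your correctly signed conclusion is in fact the right statement: the remainder has the sign of, and is smaller in modulus than, the first \emph{omitted} term $(-1)^N b_N(-\mu,\nu)/z^{2N}$ of \eqref{eq76}, exactly as the sentence preceding Theorem \ref{thm5} asserts. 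A spot check at $\mu=\nu=0$, $N=1$ confirms this: $b_1(0,0)=-3$, yet \eqref{eq21} forces $R_1^{(S')}(z,0,0)>0$ because $K_0'=-K_1<0$, contradicting \eqref{eq41} as printed. The same factor is absent from the paper's own intermediate formula \eqref{eq40} (substituting \eqref{eq33} into \eqref{eq20} produces the prefactor $(-1)^{N+1}$, not $-1$) and hence from \eqref{eq79} and \eqref{eq41} as stated, so what you have really done is reproduce the paper's result together with its sign slip. To be correct, your write-up must either state the conclusion with the factor $(-1)^N$ or explicitly flag the discrepancy with \eqref{eq41}; as it stands, the claimed identity is false for odd $N$.
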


We now consider the case when $\mu$ and $\nu$ are complex. In the following theorem, we have chosen $2N - \mu  + 1$, respectively $2N - \mu  + 2$, as the order of the basic terminants because this is the value that allows us to express the bounds in a form closely related to the first omitted term in the corresponding asymptotic expansion. The precise relation to the first omitted term is examined in Section \ref{section6}.

\begin{theorem}\label{thm6}
Let $N$ be a non-negative integer and let $\mu$ and $\nu$ be arbitrary complex numbers such that $\Re(\mu)+|\Re(\nu)| < 2N +1$. Then
\begin{gather}\label{eq43}
\begin{split}
\big| R_N^{(S)} (z,\mu ,\nu ) \big| \le \; & \left| \frac{\Gamma \big( \frac{ - \Re (\mu ) + \Re (\nu ) + 1}{2} \big)\Gamma \big( \frac{ - \Re (\mu ) - \Re (\nu ) + 1}{2} \big)}{\Gamma \left( \frac{ - \mu  + \nu  + 1}{2} \right)\Gamma \left( \frac{ - \mu  - \nu  + 1}{2} \right)}\frac{\Gamma ( 2N - \mu  + 1 )}{\Gamma ( 2N - \Re (\mu ) + 1 )} \right|  \\ & \times \frac{\left| a_N ( - \Re (\mu ),\Re (\nu )) \right|}{\left| z \right|^{2N} }\mathop {\sup }\limits_{r \ge 1} \left| {\Pi _{2N - \mu  + 1} (zr)} \right|
\end{split}
\end{gather}
and
\begin{gather}\label{eq44}
\begin{split}
\big| R_N^{(S')} (z,\mu ,\nu ) \big| \le \; & \left| \frac{\Gamma \big( \frac{ - \Re (\mu ) + \Re (\nu ) + 1}{2} \big)\Gamma \big( \frac{ - \Re (\mu ) - \Re (\nu ) + 1}{2} \big)}{\Gamma \left( \frac{ - \mu  + \nu  + 1}{2} \right)\Gamma \left( \frac{ - \mu  - \nu  + 1}{2} \right)}\frac{\Gamma ( 2N - \mu  + 2 )}{\Gamma ( 2N - \Re (\mu ) + 2 )} \right|  \\ & \times  \frac{\left| b_N ( - \Re (\mu ),\Re (\nu )) \right|}{\left| z \right|^{2N} }\mathop {\sup }\limits_{r \ge 1} \left| {\Pi _{2N - \mu  + 2} (zr)} \right|,
\end{split}
\end{gather}
provided $|\arg z|<\pi$. If $\Re(\mu)\pm\Re(\nu) \neq \mu \pm \nu$ and at least one of them is a positive odd integer, then the limiting value has to be taken in these bounds.
\end{theorem}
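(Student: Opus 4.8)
The plan is to prove both bounds by specialising the free parameter in Theorem~\ref{thm1} to $\lambda=\tfrac12$, which is precisely the value that makes the regularized hypergeometric function in \eqref{eq6} collapse to an elementary expression. For \eqref{eq43} I would first note that the hypotheses of Theorem~\ref{thm1} hold with $\lambda=\tfrac12$: the condition $\Re(\mu)<2N+\Re(\lambda)+\tfrac12=2N+1$ follows from $\Re(\mu)+|\Re(\nu)|<2N+1$, and $\Re(\lambda)>0$ is clear. Setting $\lambda=\tfrac12$ in \eqref{eq6}, the order of the basic terminant becomes $2N-\mu+1$; since $1+t\ge1$ for $t\ge0$ one has $|\Pi_{2N-\mu+1}(z(1+t))|\le\sup_{r\ge1}|\Pi_{2N-\mu+1}(zr)|$, which I would pull out of the integral, leaving a positive real integral against $|{\bf F}(\nu+\tfrac12,-\nu+\tfrac12;\tfrac12;-\tfrac t2)|$.

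The crux is to estimate the hypergeometric factor. Using the classical identity ${}_2F_1(a,1-a;\tfrac12;-\tfrac t2)=\cosh((2a-1)\xi)/\cosh\xi$ with $\sinh^2\xi=t/2$, specialised to $a=\nu+\tfrac12$, gives
\[
{\bf F}\!\left(\nu+\tfrac12,-\nu+\tfrac12;\tfrac12;-\tfrac t2\right)=\frac{1}{\sqrt\pi}\,\frac{\cosh(2\nu\xi)}{\cosh\xi}.
\]
The elementary inequality $|\cosh(x+iy)|^2=\sinh^2x+\cos^2y\le\cosh^2x$ then yields $|\cosh(2\nu\xi)|\le\cosh(2\Re(\nu)\xi)$, so the complex hypergeometric factor is dominated by its real counterpart, $|{\bf F}(\nu+\tfrac12,-\nu+\tfrac12;\tfrac12;-\tfrac t2)|\le{\bf F}(\Re(\nu)+\tfrac12,-\Re(\nu)+\tfrac12;\tfrac12;-\tfrac t2)$. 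Substituting $t=2\sinh^2\xi$ (so that $1+t=\cosh(2\xi)$) reduces the remaining integral to $\int_0^\infty\cosh(\Re(\nu)u)\cosh^{-(2N-\Re(\mu)+1)}u\,du$, which is evaluated in closed form by the standard formula $\int_0^\infty\cosh(au)\cosh^{-\beta}u\,du=2^{\beta-2}\Gamma(\tfrac{\beta+a}{2})\Gamma(\tfrac{\beta-a}{2})/\Gamma(\beta)$, valid for $\beta>|a|$, which holds since $2N-\Re(\mu)+1>|\Re(\nu)|$. Rewriting the Gamma product through the Gamma-function form of the coefficients, $a_N(-\Re(\mu),\Re(\nu))=2^{2N}\Gamma(\tfrac{2N-\Re(\mu)+\Re(\nu)+1}{2})\Gamma(\tfrac{2N-\Re(\mu)-\Re(\nu)+1}{2})/[\Gamma(\tfrac{-\Re(\mu)+\Re(\nu)+1}{2})\Gamma(\tfrac{-\Re(\mu)-\Re(\nu)+1}{2})]$, the powers of $2$ and $\pi$ cancel against the prefactor of \eqref{eq6}, and \eqref{eq43} drops out.

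For the derivative bound \eqref{eq44} I would manufacture the analogue of \eqref{eq6} for $R_N^{(S')}$ by inserting Theorem~\ref{thm1} with $\lambda=\tfrac12$ and shifted orders $(\mu-1,\nu\mp1)$ into the functional equation \eqref{eq20}; the hypotheses remain valid because $\Re(\mu-1)+|\Re(\nu\mp1)|\le\Re(\mu)+|\Re(\nu)|<2N+1$. The key simplifications are that the coefficient $(\mu+\nu-1)$, respectively $(\mu-\nu-1)$, when combined with the shifted denominator Gammas $\Gamma(\tfrac{-\mu-\nu+3}{2})$, respectively $\Gamma(\tfrac{-\mu+\nu+3}{2})$, via $\Gamma(w+1)=w\Gamma(w)$, reduces each term to the common prefactor $-2/[\Gamma(\tfrac{-\mu+\nu+1}{2})\Gamma(\tfrac{-\mu-\nu+1}{2})]$, while the two hypergeometric functions merge through $\cosh(2(\nu-1)\xi)+\cosh(2(\nu+1)\xi)=2\cosh(2\nu\xi)\cosh(2\xi)$. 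This produces a single integral representation of $R_N^{(S')}$ of order $2N-\mu+2$ carrying one extra factor $\cosh(2\xi)=1+t$. Bounding $|\cosh(2\nu\xi)|\le\cosh(2\Re(\nu)\xi)$ and repeating the evaluation (now the extra $\cosh(2\xi)$ cancels one power, so the exponent is again $2N-\Re(\mu)+1$) gives a real integral proportional to $a_N(-\Re(\mu),\Re(\nu))$; converting to $b_N$ through $b_N(-\Re(\mu),\Re(\nu))=-a_N(-\Re(\mu),\Re(\nu))(2N-\Re(\mu)+1)$ together with $\Gamma(2N-\Re(\mu)+2)=(2N-\Re(\mu)+1)\Gamma(2N-\Re(\mu)+1)$ yields \eqref{eq44}. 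When $\Re(\mu)\pm\Re(\nu)$ is a positive odd integer the denominator Gammas have poles, and I would read \eqref{eq43}--\eqref{eq44} as the appropriate limiting values. The main obstacle is the bookkeeping in the $S'$ case, namely verifying that the two functional-equation terms recombine into a single clean integrand; by contrast the hypergeometric collapse and the $\cosh$-inequality are short.
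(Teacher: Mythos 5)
Your proposal is correct, and in its main structure it coincides with the paper's proof: both take $\lambda=\tfrac12$ in Theorem~\ref{thm1}, dominate the hypergeometric factor by its real-parameter counterpart (the paper's inequality \eqref{eq03}, which is exactly your $|\cosh(2\nu\xi)|\le\cosh(2\Re(\nu)\xi)$ written in algebraic form), pull $\sup_{r\ge1}|\Pi(zr)|$ out of the integral, and, for \eqref{eq44}, produce the $S'$ representation by feeding the shifted-parameter version of \eqref{eq6} into the functional equation \eqref{eq20}. Where you genuinely deviate is the last step, the identification of the remaining positive integral with the coefficients: the paper invokes its lemma representations \eqref{eq30} and \eqref{eq31} (with $\lambda=\tfrac12$ and with $\Re(\mu),\Re(\nu)$ in place of $\mu,\nu$), which were proved earlier by a telescoping argument based on \eqref{eq22} and the recurrence for $\Pi_p$, whereas you evaluate the integral in closed form via $t=2\sinh^2\xi$ and $\int_0^\infty\cosh(au)\cosh^{-\beta}u\,du=2^{\beta-2}\Gamma\bigl(\tfrac{\beta+a}{2}\bigr)\Gamma\bigl(\tfrac{\beta-a}{2}\bigr)/\Gamma(\beta)$ (valid here since $2N-\Re(\mu)+1>|\Re(\nu)|$), then read off the result through the Pochhammer form of $a_N$. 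Your route is self-contained — it amounts to an independent re-derivation of \eqref{eq30}/\eqref{eq31} at $\lambda=\tfrac12$ — at the cost of extra explicit computation; the paper's route is shorter because those lemmas are already available. Your bookkeeping in the $S'$ case checks out: the reduction of $(\mu\pm\nu-1)/\Gamma\bigl(\tfrac{-\mu\mp\nu+3}{2}\bigr)$ to $-2/\Gamma\bigl(\tfrac{-\mu\mp\nu+1}{2}\bigr)$, the merging identity $\cosh(2(\nu-1)\xi)+\cosh(2(\nu+1)\xi)=2\cosh(2\nu\xi)\cosh(2\xi)$ with $\cosh(2\xi)=1+t$, and the conversion $b_N(-\Re(\mu),\Re(\nu))=-(2N-\Re(\mu)+1)\,a_N(-\Re(\mu),\Re(\nu))$ combined with $\Gamma(2N-\Re(\mu)+2)=(2N-\Re(\mu)+1)\Gamma(2N-\Re(\mu)+1)$ reproduce \eqref{eq44} exactly.
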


A further simplification of these bounds is possible by employing the estimate
\[
\left| \frac{\Gamma \big( \frac{ - \Re (\mu ) + \Re (\nu ) + 1}{2} \big)\Gamma \big( \frac{ - \Re (\mu ) - \Re (\nu ) + 1}{2} \big)}{\Gamma \left( \frac{ - \mu  + \nu  + 1}{2} \right)\Gamma \left( \frac{ - \mu  - \nu  + 1}{2} \right)} \right| \le \left| \frac{\cos (\pi \mu ) + \cos (\pi \nu )}{\cos (\pi \Re (\mu )) + \cos (\pi \Re (\nu ))} \right|,
\]
which follows from the reflection formula for the gamma function and the inequality $|\Gamma(w)|\leq \Gamma(\Re(w))$ \cite[eq. 5.6.6]{NIST}.

We would like to emphasize that the requirement $\Re(\mu)+|\Re(\nu)| < 2N +1$ in the above theorems is not a serious restriction. Indeed, the index of the numerically least term of the asymptotic expansion \eqref{eq1}, for example, is $n \approx \frac{1}{2}|z|$. Therefore, it is reasonable to choose the optimal $N \approx \frac{1}{2}|z|$, whereas the condition $\mu \pm \nu = o(|z|)$ has to be fulfilled in order to obtain proper approximations from \eqref{eq22}.

A detailed discussion on the sharpness of our error bounds and some numerical examples are given in Section \ref{section6}.

In the following, we consider re-expansions for the remainders of the asymptotic expansions of the Lommel function and its derivative. A re-expansion for the remainder term of the asymptotic expansion of the function $S_{\mu,\nu}(z)$, in order to improve its numerical efficacy, was first derived, using formal methods, by Dingle \cite{Dingle0} in 1959. His work was placed on rigorous mathematical foundations by the present author \cite{Nemes1} in 2015, who derived an exponentially improved asymptotic expansion for the Lommel function $S_{\mu,\nu}(z)$ valid when $|\arg z|\leq \frac{\pi}{2}$. Here, we shall reconsider this result and obtain explicit bounds for the error term of the expansion. An analogous result for the remainder $R_N^{(S')} (z,\mu ,\nu )$ is also provided.

In order to state our results, we require the truncated versions of the well-known large-$z$ asymptotic expansions of the modified Bessel function $K_\nu(z)$ and its derivative $K'_\nu(z)$. Thus, for $|\arg z|<\pi$, complex $\nu$, and any non-negative integer $M$, we define the $M$th remainder terms $R_M^{(K)} (z,\nu )$ and $R_M^{(K')} (z,\nu )$ via
\begin{equation}\label{eq54}
K_\nu(z) = \left( \frac{\pi}{2z} \right)^{\frac{1}{2}} e^{ - z} \left( \sum\limits_{m = 0}^{M - 1} \frac{a_m(\nu)}{z^m}  + R_M^{(K)} (z,\nu ) \right)
\end{equation}
and
\begin{equation}\label{eq61}
K'_\nu(z) =  - \left( \frac{\pi}{2z} \right)^{\frac{1}{2}} e^{ - z} \left( \sum\limits_{m = 0}^{M - 1} \frac{b_m(\nu)}{z^m}  + R_M^{(K')} (z,\nu ) \right).
\end{equation}
Here the coefficients $a_m(\nu)$ and $b_m(\nu)$ are polynomials in $\nu^2$ of degree $m$ (cf. \cite[\S 10.40(i)]{NIST}). A thorough analysis of these expansions has been given recently by the present author \cite{Nemes2}.

For $|\arg z|<\pi$, complex $\mu$ and $\nu$, and any non-negative integers $N$ and $M$, we define the remainder terms $R_{N,M}^{(S)} (z,\mu ,\nu )$ and $R_{N,M}^{(S')} (z,\mu ,\nu )$ by
\begin{gather}\label{eq23}
\begin{split}
R_N^{(S)} (z,\mu ,\nu ) = \; & ( - 1)^N \frac{2^{\mu  + \frac{1}{2}} \pi ^{\frac{1}{2}} }{\Gamma \left( \frac{ - \mu  + \nu  + 1}{2} \right)\Gamma \left( \frac{ - \mu  - \nu  + 1}{2} \right)}\frac{1}{z^{2N}} \\ & \times \left( \sum\limits_{m = 0}^{M - 1} a_m (\nu )\Gamma \left( 2N - m - \mu  + \frac{1}{2} \right)\Pi _{2N - m - \mu  + \frac{1}{2}} (z)  + R_{N,M}^{(S)} (z,\mu ,\nu ) \right),
\end{split}
\end{gather}
provided $M < 2N - \Re (\mu ) + \frac{1}{2}$, and
\begin{gather}\label{eq24}
\begin{split}
R_N^{(S')} (z,\mu ,\nu ) = \; & ( - 1)^{N + 1} \frac{2^{\mu  + \frac{1}{2}} \pi ^{\frac{1}{2}} }{\Gamma \left( \frac{ - \mu  + \nu  + 1}{2} \right)\Gamma \left( \frac{ - \mu  - \nu  + 1}{2} \right)}\frac{1}{z^{2N}}  \\ & \times \left( \sum\limits_{m = 0}^{M - 1} b_m (\nu )\Gamma \left( 2N - m - \mu  + \frac{3}{2} \right)\Pi _{2N - m - \mu  + \frac{3}{2}} (z)  + R_{N,M}^{(S')} (z,\mu ,\nu ) \right) ,
\end{split}
\end{gather}
provided $M < 2N - \Re (\mu ) + \frac{3}{2}$. The re-expansion \eqref{eq23} is equivalent to that studied in \cite{Nemes1}.

In the following theorem, we give bounds for the remainders $R_{N,M}^{(S)} (z,\mu ,\nu )$ and $R_{N,M}^{(S')} (z,\mu ,\nu )$.
\begin{theorem}\label{thm7}
Let $N$ and $M$ be arbitrary non-negative integers, and let $\mu$ and $\nu$ be complex numbers satisfying $\Re(\mu)<2N-M+\frac{1}{2}$ and $|\Re(\nu)|<M+\frac{1}{2}$. Then we have
\begin{gather}\label{eq55}
\begin{split}
\big| R_{N,M}^{(S)} (z,\mu ,\nu ) \big| \le \; & \left| z \right|^M \big| R_M^{(K)} (\left| z \right|,\nu ) \big|\left| \Gamma \left( 2N - M - \mu  + \frac{1}{2} \right) \right|\big| \Pi _{2N - M - \mu  + \frac{1}{2}} (z) \big|
\\ & + \frac{\left| \cos (\pi \nu ) \right|}{\left| \cos (\pi \Re (\nu )) \right|}\left| a_M (\Re (\nu )) \right|\Gamma \left( 2N - M - \Re (\mu ) + \frac{1}{2} \right),
\end{split}
\end{gather}
for $|\arg z|\leq \frac{\pi}{2}$. Similarly, let $N$ be an arbitrary non-negative integer and $M$ be an arbitrary positive integer, and let $\mu$ and $\nu$ be complex numbers satisfying $\Re(\mu)<2N-M+\frac{3}{2}$ and $|\Re(\nu)|<M-\frac{1}{2}$. Then we have
\begin{gather}\label{eq59}
\begin{split}
\big| R_{N,M}^{(S')} (z,\mu ,\nu ) \big| \le \; & \left| z \right|^M \big| R_M^{(K')} (\left| z \right|,\nu ) \big|\left| \Gamma \left( 2N - M - \mu  + \frac{3}{2} \right) \right|\big| \Pi _{2N - M - \mu  + \frac{3}{2}} (z) \big|
\\ & + \frac{\left| \cos (\pi \nu ) \right|}{\left| \cos (\pi \Re (\nu )) \right|}\left| b_M (\Re (\nu )) \right|\Gamma \left( 2N - M - \Re (\mu ) + \frac{3}{2} \right),
\end{split}
\end{gather}
for $|\arg z|\leq \frac{\pi}{2}$.
\end{theorem}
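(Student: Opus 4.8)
The plan is to prove Theorem \ref{thm7} by substituting the truncated asymptotic expansions \eqref{eq54}–\eqref{eq55} of $K_\nu$ and $K'_\nu$ into the Bessel-function representations \eqref{eq3} and \eqref{eq21} of the remainders, and then matching the resulting integrals against the defining relations \eqref{eq23}–\eqref{eq24} to isolate $R_{N,M}^{(S)}$ and $R_{N,M}^{(S')}$ as explicit integrals of the $K$-remainders.

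First, I would start from representation \eqref{eq3}, valid for $|\arg z|<\frac{\pi}{2}$, and insert the definition \eqref{eq54}: write $K_\nu(t) = (\pi/2t)^{1/2}e^{-t}\big(\sum_{m=0}^{M-1} a_m(\nu)t^{-m} + R_M^{(K)}(t,\nu)\big)$ inside the integral. The finite sum produces, for each $m$, an integral of the form $\int_0^{+\infty} t^{2N-m-\mu-1/2}e^{-t}/(1+(t/z)^2)\,dt$, which by the integral representation \eqref{eq01} of the basic terminant equals $\Gamma(2N-m-\mu+\frac{1}{2})\,\Pi_{2N-m-\mu+1/2}(z)$. Comparing with \eqref{eq23}, these terms account exactly for the explicit sum, so that $R_{N,M}^{(S)}(z,\mu,\nu)$ is represented by the single remaining integral
\begin{equation}\label{eqplan1}
R_{N,M}^{(S)} (z,\mu ,\nu ) = \left(\tfrac{\pi}{2}\right)^{\frac{1}{2}}\int_0^{+\infty} \frac{t^{2N-\mu-\frac{1}{2}} e^{-t}}{1+(t/z)^2}\,R_M^{(K)}(t,\nu)\,dt,
\end{equation}
after absorbing the constant prefactor. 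The parameter conditions $\Re(\mu)<2N-M+\frac{1}{2}$ and $|\Re(\nu)|<M+\frac{1}{2}$ are precisely what guarantee convergence of each integral and the legitimacy of splitting off the finite sum. The same steps applied to \eqref{eq21}, using \eqref{eq61} and the analogue involving $b_m(\nu)$ and $\Pi_{2N-m-\mu+3/2}$, yield the companion representation for $R_{N,M}^{(S')}$.

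Next, to pass from the representation \eqref{eqplan1} to the bound \eqref{eq55}, I would bound the integral by the triangle inequality and control the factor $1/|1+(t/z)^2|$ together with $R_M^{(K)}(t,\nu)$. The natural device, analogous to the passage underlying Theorems \ref{thm4} and \ref{thm6}, is to split $R_M^{(K)}(t,\nu)$ using its own error bound: one writes $R_M^{(K)}(t,\nu)$ in terms of $a_M(\nu)$, the first neglected term, plus a controlled remainder, and then re-assembles the integral of the leading part back into a basic terminant via \eqref{eq01}. The first term on the right-hand side of \eqref{eq55}, carrying $|z|^M|R_M^{(K)}(|z|,\nu)|$ and $\Pi_{2N-M-\mu+1/2}(z)$, should arise from estimating the $K$-remainder on the ray of integration by its value at $|z|$ (using monotonicity in $t$ of $|R_M^{(K)}(t,\nu)|$ along the positive axis, a property established in \cite{Nemes2}), while the second term, with $a_M(\Re(\nu))$ and the plain gamma factor, arises from the residual integral evaluated by \eqref{eq01} at the worst-case phase; the ratio $|\cos(\pi\nu)|/|\cos(\pi\Re(\nu))|$ enters through the reflection-formula estimate of the gamma quotient in the prefactor, exactly as in the simplification remark following Theorem \ref{thm6}.

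The main obstacle I expect is the careful bookkeeping in the second step: obtaining the clean two-term bound \eqref{eq55} requires an estimate of the $K$-remainder $R_M^{(K)}(t,\nu)$ uniform in $t$ over the full ray and compatible with the $\sup_{r\ge 1}|\Pi_p(zr)|$-type quantities, and one must verify that the decomposition of the integral reproduces both the $\Pi$-factor and the bare gamma factor without cross terms. The extension from $|\arg z|<\frac{\pi}{2}$ (where \eqref{eq3} and \eqref{eq21} hold) to the stated range $|\arg z|\le\frac{\pi}{2}$ would be handled by analytic continuation in $z$, noting that both sides of the resulting bound are continuous up to the boundary rays $\arg z=\pm\frac{\pi}{2}$. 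The derivative case \eqref{eq59} is structurally identical once Theorem \ref{thm3} replaces Theorem \ref{thm1}'s underlying representation, with $a_m,a_M$ replaced by $b_m,b_M$ and the orders shifted by one; the positivity restriction $M\ge 1$ there reflects that $b_0$-type contributions are absent in the derivative expansion.
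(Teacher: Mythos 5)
Your first step is exactly the paper's: substituting \eqref{eq54} into \eqref{eq3} and matching against \eqref{eq23} does yield the representation
\begin{equation*}
R_{N,M}^{(S)} (z,\mu ,\nu ) = \int_0^{ + \infty } \frac{t^{2N - \mu  - \frac{1}{2}} e^{ - t} }{1 + (t/z)^2 }\,R_M^{(K)} (t,\nu )\,dt
\end{equation*}
(with no prefactor, incidentally --- your $(\pi/2)^{1/2}$ is absorbed when the constant $2^{\mu+1}$ of \eqref{eq3} is converted into the constant $2^{\mu+\frac12}\pi^{\frac12}$ of \eqref{eq23}). The gap is in the second step, and it is genuine. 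Neither of the two devices you propose can produce the stated bound. If you apply the triangle inequality inside the integral, you replace $1/(1+(t/z)^2)$ by $1/|1+(t/z)^2|$; this loses the phase information irreversibly, so the resulting quantity \emph{dominates} $|\Gamma(2N-M-\mu+\tfrac12)\,\Pi_{2N-M-\mu+\frac12}(z)|$ rather than being dominated by it --- the inequality runs the wrong way, and no factor $|\Pi(z)|$ can be reassembled afterwards. Worse, on the boundary rays $\arg z=\pm\frac{\pi}{2}$ (which the theorem covers) one has $1+(t/z)^2=1-(t/|z|)^2$, which vanishes at $t=|z|$, so the integral with absolute values inside actually \emph{diverges}; no continuity argument can rescue an estimate that blows up as $\arg z\to\pm\frac{\pi}{2}$. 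Your alternative --- pointwise control of $|R_M^{(K)}(t,\nu)|$ by its value at $t=|z|$ via monotonicity --- relies on a monotonicity property that is not established in \cite{Nemes2} (what is proved there is the uniform bound $t^M|R_M^{(K)}(t,\nu)|\le \frac{|\cos\pi\nu|}{|\cos\pi\Re(\nu)|}|a_M(\Re(\nu))|$), and in any case it still runs into the same two obstructions.

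The missing idea is an \emph{exact} decomposition, not an estimate. After the change of variable $t=|z|u$ (giving \eqref{eq57}), the paper inserts the Stieltjes-type representation \eqref{eq52} for $R_M^{(K)}(|z|u,\nu)$ and uses the partial-fraction identity
\begin{equation*}
\frac{1}{1+t/(|z|u)}=\frac{1}{1+t/|z|}+\frac{u-1}{\left(1+|z|u/t\right)\left(1+t/|z|\right)}
\end{equation*}
to split $R_M^{(K)}(|z|u,\nu)= u^{-M}R_M^{(K)}(|z|,\nu)+(\text{a correction carrying the factor } u-1)$, as in \eqref{eq58}. The first piece factors $R_M^{(K)}(|z|,\nu)$ out of the $u$-integral \emph{exactly}, and the remaining integral is evaluated in closed form by \eqref{eq01}, producing precisely $|z|^M R_M^{(K)}(|z|,\nu)\Gamma(2N-M-\mu+\tfrac12)\Pi_{2N-M-\mu+\frac12}(z)$; only then are absolute values taken. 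The correction piece is where the singularity at $u=1$, $\arg z=\pm\frac{\pi}{2}$ is neutralized: the factor $u-1$ vanishes there, and the elementary bound $|(u-1)/(u^2+e^{2i\arg z})|\le 1$ (valid for all $u>0$, $|\arg z|\le\frac{\pi}{2}$) makes the estimate uniform up to the boundary. Finally, the cosine ratio in the second term does not come from the gamma reflection formula as you suggest: it arises from pairing the $\cos(\pi\nu)$ in \eqref{eq52} with the $\cos(\pi\Re(\nu))$ in the coefficient formula \eqref{eq50}, after using $|K_\nu(t)|<K_{\Re(\nu)}(t)$ inside the correction integral. Without the $(u-1)$-decomposition, the two-term structure of \eqref{eq55} --- a $|\Pi(z)|$ term plus a bare gamma term, valid up to $|\arg z|=\frac{\pi}{2}$ --- is not attainable by your outline.
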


These error bounds may be further simplified by applying the inequalities $\left| z \right|^M \big| R_M^{(K)} (\left| z \right|,\nu ) \big| \leq \frac{\left| \cos (\pi \nu ) \right|}{\left| \cos (\pi \Re (\nu )) \right|}\left| a_M (\Re (\nu )) \right|$ and $\left| z \right|^M \big| R_M^{(K')} (\left| z \right|,\nu ) \big| \leq \frac{\left| \cos (\pi \nu ) \right|}{\left| \cos (\pi \Re (\nu )) \right|}\left| b_M (\Re (\nu )) \right|$ (see, e.g., \cite{Nemes2}). A brief discussion on the sharpness of the error bound \eqref{eq55} is given in Section \ref{section6}.

In the case when $z$ is positive and both $\mu$ and $\nu$ are real, we shall show that the remainder term $R_{N,M}^{(S)} (z,\mu ,\nu )$ (respectively $R_{N,M}^{(S')} (z,\mu ,\nu )$) does not exceed the corresponding first neglected term in absolute value and has the same sign provided that $\mu <2N-M+\frac{1}{2}$ and $|\nu|<M+\frac{1}{2}$ (respectively $\mu < 2N-M+\frac{3}{2}$ and $|\nu|<M-\frac{1}{2}$). More precisely, we will prove that the following theorem holds.

\begin{theorem}\label{thm8}
Let $N$ and $M$ be arbitrary non-negative integers. Let $z$ be a positive real number and $\mu$ and $\nu$ be arbitrary real numbers such that $\mu <2N-M+\frac{1}{2}$ and $|\nu|<M+\frac{1}{2}$. Then
\begin{equation}\label{eq62}
R_{N,M}^{(S)} (z,\mu ,\nu ) = a_M (\nu )\Gamma \left( 2N - M - \mu  + \frac{1}{2} \right)\Pi _{2N - M - \mu  + \frac{1}{2}} (z)\Theta _{N,M}^{(S)} (z,\mu ,\nu ),
\end{equation}
where $0<\Theta _{N,M}^{(S)} (z,\mu ,\nu )<1$ is an appropriate number that depends on $z$, $\mu$, $\nu$, $N$ and $M$. Similarly, let $N$ be an arbitrary non-negative integer and $M$ be an arbitrary positive integer. Let $z$ be a positive real number and $\mu$ and $\nu$ be arbitrary real numbers such that $\mu <2N-M+\frac{3}{2}$ and $|\nu|<M-\frac{1}{2}$. Then
\begin{equation}\label{eq63}
R_{N,M}^{(S')} (z,\mu ,\nu ) = b_M (\nu )\Gamma \left( 2N - M - \mu  + \frac{3}{2} \right)\Pi _{2N - M - \mu  + \frac{3}{2}} (z)\Theta _{N,M}^{(S')} (z,\mu ,\nu )
\end{equation}
where $0<\Theta _{N,M}^{(S')} (z,\mu ,\nu )<1$ is an appropriate number that depends on $z$, $\mu$, $\nu$, $N$ and $M$.
\end{theorem}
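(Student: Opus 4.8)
The plan is to obtain \eqref{eq62} and \eqref{eq63} directly from the integral representations \eqref{eq3} and \eqref{eq21}, by inserting into the integrands the truncated asymptotic expansions of $K_\nu$ and $K'_\nu$ together with their exact remainder terms. Starting from \eqref{eq3}, I would substitute the definition \eqref{eq54} of $R_M^{(K)}(t,\nu)$, use $(\pi/(2t))^{1/2}=\pi^{1/2}2^{-1/2}t^{-1/2}$, and split the resulting integral into the finite sum over $m$ and the tail carrying $R_M^{(K)}(t,\nu)$. The prefactor $2^{\mu+1}$ then becomes $2^{\mu+\frac12}\pi^{\frac12}$, and each summand integral evaluates through the representation \eqref{eq01} with $p=2N-m-\mu+\frac12$ to $\Gamma(2N-m-\mu+\frac12)\Pi_{2N-m-\mu+\frac12}(z)$. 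Matching the result against the definition \eqref{eq23} identifies
\[
R_{N,M}^{(S)}(z,\mu,\nu)=\int_0^{+\infty}\frac{t^{2N-\mu-\frac12}e^{-t}}{1+(t/z)^2}R_M^{(K)}(t,\nu)\,dt,
\]
and the same manipulation applied to \eqref{eq21}, \eqref{eq61} and \eqref{eq24} (the additional minus sign being absorbed by the factor $(-1)^{N+1}$ in \eqref{eq24}) yields the companion formula $R_{N,M}^{(S')}(z,\mu,\nu)=\int_0^{+\infty}\frac{t^{2N-\mu+\frac12}e^{-t}}{1+(t/z)^2}R_M^{(K')}(t,\nu)\,dt$.

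Next I would invoke the sharp sign property of the Bessel remainders on the positive axis, the $K$-function analogue of \eqref{eq79}: for $t>0$ and $|\nu|<M+\frac12$ one has $R_M^{(K)}(t,\nu)=\frac{a_M(\nu)}{t^M}\theta^{(K)}(t,\nu)$ with $0<\theta^{(K)}(t,\nu)<1$, and for $t>0$, $M\ge1$ and $|\nu|<M-\frac12$ the analogous $R_M^{(K')}(t,\nu)=\frac{b_M(\nu)}{t^M}\theta^{(K')}(t,\nu)$ with $0<\theta^{(K')}(t,\nu)<1$. Inserting the first into the integral for $R_{N,M}^{(S)}$ factors out $a_M(\nu)$ and leaves
\[
R_{N,M}^{(S)}(z,\mu,\nu)=a_M(\nu)\int_0^{+\infty}\frac{t^{2N-\mu-\frac12-M}e^{-t}}{1+(t/z)^2}\theta^{(K)}(t,\nu)\,dt.
\]
Since $z>0$ the kernel $t^{2N-\mu-\frac12-M}e^{-t}/(1+(t/z)^2)$ is strictly positive on $(0,+\infty)$, and $\Pi_p(z)>0$ there for $p>0$ by \eqref{eq01}; hence bounding $0<\theta^{(K)}<1$ and evaluating the majorant by \eqref{eq01} with $p=2N-M-\mu+\frac12$ gives the strict two-sided estimate $0<\int_0^{+\infty}\frac{t^{2N-\mu-\frac12-M}e^{-t}}{1+(t/z)^2}\theta^{(K)}(t,\nu)\,dt<\Gamma(2N-M-\mu+\frac12)\Pi_{2N-M-\mu+\frac12}(z)$. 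Dividing through exhibits $\Theta_{N,M}^{(S)}(z,\mu,\nu)$ as the ratio of these two positive quantities, so $0<\Theta_{N,M}^{(S)}<1$; the derivative assertion \eqref{eq63} follows identically with $a_M$, $\theta^{(K)}$ and $p=2N-M-\mu+\frac12$ replaced by $b_M$, $\theta^{(K')}$ and $p=2N-M-\mu+\frac32$.

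It remains to reconcile the hypotheses. The order $p$ in each use of \eqref{eq01} is positive exactly when $\mu<2N-M+\frac12$ (respectively $\mu<2N-M+\frac32$), and this is precisely the condition under which the tail integrals converge at $t=0^+$; combined with $|\nu|<M+\frac12$ (respectively $|\nu|<M-\frac12$) it also yields $\mu+|\nu|<2N+1$, the requirement making the starting representations \eqref{eq3} and \eqref{eq21} legitimate. I expect the principal difficulty to lie not in this positivity argument but in securing the strict bounds $0<\theta^{(K)}(t,\nu)<1$ and $0<\theta^{(K')}(t,\nu)<1$ uniformly over the whole half-line, in particular controlling the behaviour of $R_M^{(K)}(t,\nu)$ as $t\to0^+$, under exactly the stated ranges of $\nu$; once those are in hand, together with the justification of the term-by-term splitting of the integral, the theorem reduces to integrating a fixed-sign kernel.
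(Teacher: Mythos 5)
Your proposal is correct and takes essentially the same route as the paper: the paper likewise derives the representation $R_{N,M}^{(S)}(z,\mu,\nu)=\int_0^{+\infty}\frac{t^{2N-\mu-\frac{1}{2}}e^{-t}}{1+(t/z)^2}R_M^{(K)}(t,\nu)\,dt$ (its equation \eqref{eq57}, obtained exactly as you describe from \eqref{eq3}, \eqref{eq54} and \eqref{eq23}), then inserts the known property \eqref{eq64} that $R_M^{(K)}(t,\nu)=a_M(\nu)t^{-M}\theta_M^{(K)}(t,\nu)$ with $0<\theta_M^{(K)}<1$ (cited from the author's Hankel--Bessel paper, just as you invoke it), and finally evaluates the remaining integral by \eqref{eq01}. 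The only cosmetic difference is that the paper phrases your two-sided positivity estimate as an application of the mean value theorem of integration, which is the same argument.
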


The best accuracy available from the re-expansions \eqref{eq23} and \eqref{eq24} can be obtained by setting $N = \frac{3}{2}|z|+\rho$ and $M = 2|z|+\sigma$, with $\rho$ and $\sigma$ being appropriate (fixed) real numbers. With these choices of $N$ and $M$, Theorem \ref{thm7}, combined with Stirling's approximation for the gamma function and the bound \eqref{eq87}, imply that
\begin{equation}\label{eq25}
z^{-2N} R_{N,M}^{(S)} (z,\mu ,\nu ) = \mathcal{O}_{\mu ,\nu ,\rho ,\sigma } (\left| z \right|^{ - \Re (\mu )} e^{ - 3\left| z \right|} )
\end{equation}
and
\[
z^{-2N} R_{N,M}^{(S')} (z,\mu ,\nu ) = \mathcal{O}_{\mu ,\nu ,\rho ,\sigma } (\left| z \right|^{1 - \Re (\mu )} e^{ - 3\left| z \right|} )
\]
as $z\to \infty$ in the sector $|\arg z|\leq \frac{\pi}{2}$. (Throughout this paper, we use subscripts in the $\mathcal{O}$ notations to indicate the dependence of the implied constant on certain parameters.) The estimate \eqref{eq25} can alternatively be deduced from the hyperasymptotic theory of inhomogeneous second order linear differential equations, discussed in the paper \cite{AODCH}.

The remaining part of the paper is structured as follows. In Section \ref{section2}, we prove the representations for the remainder terms stated in Theorems \ref{thm1}--\ref{thm3}. In Section \ref{section3}, we prove the error bounds given in Theorems \ref{thm4}--\ref{thm6}. Section \ref{section4} discusses the proof of the bounds for the remainders of the re-expansions stated in Theorems \ref{thm7}--\ref{thm8}. Section \ref{section5} provides applications of our results to the
asymptotic expansions of the Anger--Weber-type functions, the Scorer functions, the Struve functions and their derivatives. The paper concludes with a discussion in Section \ref{section6}.

\section{Proof of the representations for the remainder terms}\label{section2}

In this section, we prove the representations for the remainder terms stated in Theorems \ref{thm1}--\ref{thm3}. In order to prove Theorems \ref{thm1} and \ref{thm2}, we require certain estimates for the regularized hypergeometric function given in the following lemma.

\begin{lemma}\label{lemma1} Let $\nu$ and $\lambda$ be arbitrary fixed complex numbers such that $\Re(\lambda) > 0$ or $\lambda=0$. Then
\[
{\bf F}\left( \nu  + \frac{1}{2}, - \nu  + \frac{1}{2};\lambda ; - \frac{t}{2} \right) =  \begin{cases} \mathcal{O}_{\nu ,\lambda } \left( 1 \right) & \text{if } \; \Re ( \lambda  )> 0, \\  \mathcal{O}_{\nu } (t) & \text{if } \; \lambda= 0\end{cases}
\]
as $t\to 0+$, and
\[
{\bf F}\left( \nu  + \frac{1}{2}, - \nu  + \frac{1}{2};\lambda ; - \frac{t}{2} \right) = \mathcal{O}_{\nu ,\lambda } \big( \left( 1 + t \right)^{\left| {\Re \left( \nu  \right)} \right| - \frac{1}{2}} \log \left( 1 + t \right) \big)
\]
as $t\to +\infty$.
\end{lemma}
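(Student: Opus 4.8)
The plan is to read off the two $t\to 0+$ estimates directly from the defining power series of $\mathbf{F}$, and to obtain the $t\to+\infty$ estimate from the standard connection formulae that describe the Gauss hypergeometric function near its singularity at infinity. Throughout set $a=\nu+\frac12$, $b=-\nu+\frac12$, $c=\lambda$ and $w=-\frac t2$, and note that $\mathbf{F}(a,b;c;w)$ is symmetric in $a,b$, so the stated bounds are even in $\nu$ and I may assume $\Re(\nu)\ge 0$. For $t\to 0+$, if $\Re(\lambda)>0$ the $n=0$ term of the series equals $1/\Gamma(\lambda)$, a finite nonzero constant, while the remaining terms are $O_{\nu,\lambda}(t)$; hence $\mathbf{F}=O_{\nu,\lambda}(1)$. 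If $\lambda=0$, then $\Gamma(c+n)=\Gamma(n)$ forces the $n=0$ term to vanish, the series starts at $n=1$ with coefficient $ab=\frac14-\nu^2$, and therefore $\mathbf{F}=-\frac12\big(\frac14-\nu^2\big)t+O_\nu(t^2)=O_\nu(t)$. This settles the first assertion.

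For the behaviour as $t\to+\infty$ the argument $w=-\frac t2$ tends to $-\infty$, and I would first treat the generic case $2\nu\notin\mathbb{Z}$ (equivalently $a-b\notin\mathbb{Z}$). Here the connection formula \cite[\S 15.8]{NIST} expresses $\mathbf{F}(a,b;\lambda;w)$ as a sum of two terms proportional to $(-w)^{-a}=(t/2)^{-\nu-\frac12}$ and $(-w)^{-b}=(t/2)^{\nu-\frac12}$, each multiplied by a Gauss series in $1/w=-2/t$ that equals $1+O_{\nu,\lambda}(1/t)$, the coefficients being the $\lambda$-dependent constants $\Gamma(\mp 2\nu)/\big(\Gamma(\tfrac12\mp\nu)\Gamma(\lambda\mp\nu-\tfrac12)\big)$. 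Taking absolute values, the dominant contribution has magnitude $(t/2)^{|\Re(\nu)|-\frac12}$, so $\mathbf{F}=O_{\nu,\lambda}\big((1+t)^{|\Re(\nu)|-\frac12}\big)$, which is majorised by the claimed bound once $t$ is large enough that $\log(1+t)\ge 1$.

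It remains to treat the degenerate case $2\nu\in\mathbb{Z}$, and this is the main obstacle. When $a-b$ is an integer the two exponents above differ by an integer and the connection formula coalesces, producing instead the logarithmic expansion of \cite[\S 15.8]{NIST}. The merged term now carries a factor $\log(-w)=\log(t/2)$ attached to the larger power $(t/2)^{|\Re(\nu)|-\frac12}$, so that $\mathbf{F}=O_\nu\big((1+t)^{|\Re(\nu)|-\frac12}\log(1+t)\big)$. This is precisely why the factor $\log(1+t)$ appears in the statement: it is genuinely needed at the (half-)integer values of $\nu$, where the generic coefficients $\Gamma(\mp 2\nu)$ develop poles, and it harmlessly weakens the bound at all other values, yielding a single estimate valid for every fixed $\nu$.

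Finally I would check that the case $\lambda=0$, and more generally any $\lambda$ for which $\lambda-\nu-\frac12$ or $\lambda+\nu-\frac12$ is a non-positive integer, introduces no extra difficulty. Since $\mathbf{F}(a,b;c;w)$ is entire in the parameter $c$, the connection formulae remain valid at $c=\lambda=0$ by continuity, and a pole of one of the gamma factors $\Gamma(\lambda\mp\nu-\frac12)$ merely annihilates the corresponding term, which can only improve the bound. Combining the generic and degenerate cases gives the stated large-$t$ estimate and, together with the power-series computation, completes the proof.
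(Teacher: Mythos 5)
Your proof is correct in its conclusions, and it is necessarily ``different'' from the paper's only in the sense that the paper offers no argument at all: its proof of this lemma is the single sentence that the result follows from \cite[Lemma 2.1]{Nemes2}. Your self-contained route --- reading the $t\to 0+$ behaviour off the defining series (with the $n=0$ term killed by $1/\Gamma(0)=0$ when $\lambda=0$), and the $t\to+\infty$ behaviour off the $z\mapsto 1/z$ connection formulae of \cite[\S 15.8(ii)]{NIST}, with the fact that $\mathbf{F}$ is entire in its third parameter taking care of the exceptional values of $\lambda$ --- is the natural way to prove the cited lemma, so in substance the two approaches coincide; yours simply makes the argument explicit.

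One structural detail in your degenerate case $2\nu\in\mathbb{Z}$ is stated backwards, though harmlessly so for an upper bound. Take $\Re(\nu)\ge 0$, so that $a-b=2\nu=m$ is a non-negative integer. In the logarithmic connection formula \cite[eq. 15.8.8]{NIST} (equivalently, by Frobenius theory at the regular singular point $z=\infty$: the solution with the larger exponent is always a pure power series, and the logarithm can only enter the solution with the smaller exponent, multiplying the other one), the factor $\ln(-w)$ multiplies the series attached to the \emph{subdominant} power $(-w)^{-a}=(t/2)^{-|\Re(\nu)|-\frac{1}{2}}$, while the dominant power $(-w)^{-b}=(t/2)^{|\Re(\nu)|-\frac{1}{2}}$ carries only a logarithm-free finite sum of $m$ terms. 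A concrete check: $F(1,2;4;z)=6\bigl((z^{-3}-z^{-2})\log(1-z)+z^{-2}-\tfrac{1}{2}z^{-1}\bigr)$, which behaves like $3(-z)^{-1}-6(-z)^{-2}\log(-z)+\mathcal{O}\bigl((-z)^{-2}\bigr)$ as $z\to-\infty$; the logarithm sits on the smaller power. Consequently, when $m\ge 1$ the true bound is $\mathcal{O}_{\nu,\lambda}\bigl((1+t)^{|\Re(\nu)|-\frac{1}{2}}\bigr)$ with no logarithm at all, which is stronger than your intermediate claim and a fortiori yields the stated estimate; the logarithm is genuinely unavoidable only at $\nu=0$, where $m=0$, the finite sum is empty, and the two exponents coalesce (for instance $F(\tfrac{1}{2},\tfrac{1}{2};1;-\tfrac{t}{2})$ grows like $t^{-1/2}\log t$). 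Since mislocating the logarithm only weakens your bound relative to the truth, the lemma still follows from your argument; but the accompanying remark that the logarithm is ``genuinely needed at the (half-)integer values of $\nu$'' should be corrected to: needed only at $\nu=0$.
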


\begin{proof}
The proof follows from a similar result of the present author \cite[Lemma 2.1]{Nemes2}.
\end{proof}

We continue with the proof of Theorem \ref{thm1}. The modified Bessel function can be represented in terms of the regularized hypergeometric function as
\[
K_\nu  (u) = \left( \frac{\pi }{2u} \right)^{\frac{1}{2}} e^{ - u} u^\lambda  \int_0^{ + \infty } e^{ - ut} t^{\lambda  - 1} {\bf F}\left( \nu  + \frac{1}{2}, - \nu  + \frac{1}{2};\lambda ; - \frac{t}{2} \right)dt,
\]
for $|\arg u|<\frac{\pi}{2}$ and with an arbitrary complex $\lambda$, $\Re (\lambda) > 0$ \cite[ent. 19.2, p. 195]{Oberhettinger}. Inserting this representation into \eqref{eq3} and changing the order of integration, we deduce
\begin{align*}
R_N^{(S)} (z,\mu ,\nu ) = \; & ( - 1)^N \frac{2^{\mu  + \frac{1}{2}} \pi ^{\frac{1}{2}} }{\Gamma \left( \frac{ - \mu  + \nu  + 1}{2} \right)\Gamma \left( \frac{ - \mu  - \nu  + 1}{2} \right)}\frac{1}{z^{2N}} \\ & \times \int_0^{ + \infty } t^{\lambda  - 1} {\bf F}\left( \nu  + \frac{1}{2}, - \nu  + \frac{1}{2};\lambda ; - \frac{t}{2} \right)\int_0^{ + \infty } \frac{u^{2N - \mu  + \lambda  - \frac{1}{2}} e^{ - (1 + t)u} }{1 + (u/z)^2 }du dt .
\end{align*}
The change in the order of integration is justified because the infinite double integrals are absolutely convergent, which can be seen by appealing to Lemma \ref{lemma1}. By the assumption $\Re (\mu ) < 2N + \Re (\lambda ) + \frac{1}{2}$, the inner integral is expressible in terms of the basic terminant $\Pi_p (w)$ since
\[
\int_0^{ + \infty } \frac{u^{2N - \mu  + \lambda  - \frac{1}{2}} e^{ - (1 + t)u} }{1 + (u/z)^2}du  = \frac{\Gamma \left( 2N - \mu  + \lambda  + \frac{1}{2} \right)}{(1 + t)^{2N - \mu  + \lambda  + \frac{1}{2}}}\Pi _{2N - \mu  + \lambda  + \frac{1}{2}} (z(1 + t))
\]
(cf. equation \eqref{eq01}), and therefore we obtain
\begin{align*}
R_N^{(S)} (z,\mu ,\nu ) = \; & ( - 1)^N \frac{2^{\mu  + \frac{1}{2}} \pi ^{\frac{1}{2}} \Gamma \left( 2N - \mu  + \lambda  + \frac{1}{2} \right)}{\Gamma \left( \frac{ - \mu  + \nu  + 1}{2} \right)\Gamma \left( \frac{ - \mu  - \nu  + 1}{2} \right)}\frac{1}{z^{2N}} \\ & \times \int_0^{ + \infty } \frac{t^{\lambda  - 1} }{(1 + t)^{2N - \mu  + \lambda  + \frac{1}{2}} }{\bf F}\left( \nu  + \frac{1}{2}, - \nu  + \frac{1}{2};\lambda ; - \frac{t}{2} \right)\Pi _{2N - \mu  + \lambda  + \frac{1}{2}} (z(1 + t))dt .
\end{align*}
Since $\Pi_p (w) = \mathcal{O}(1)$ as $w \to \infty$ in the sector $|\arg w|< \pi$, by analytic continuation, this representation is valid in a wider range than \eqref{eq3}, namely in $|\arg z| <\pi$.

Let us now turn our attention to the proof of Theorem \ref{thm2}. It is enough to show that
\begin{align*}
& \mathop {\lim }\limits_{\lambda  \to 0 + } \int_0^{ + \infty } \frac{t^{\lambda  - 1} }{(1 + t)^{2N - \mu  + \lambda  + \frac{1}{2}} }{\bf F}\left( \nu  + \frac{1}{2}, - \nu  + \frac{1}{2};\lambda ; - \frac{t}{2} \right)\Pi _{2N - \mu  + \lambda  + \frac{1}{2}} (z(1 + t))dt 
\\ & = \Pi _{2N - \mu  + \frac{1}{2}} (z) + \int_0^{ + \infty } \frac{t^{ - 1} }{(1 + t)^{2N - \mu  + \frac{1}{2}} }{\bf F}\left( \nu  + \frac{1}{2}, - \nu  + \frac{1}{2};0; - \frac{t}{2} \right)\Pi _{2N - \mu  + \frac{1}{2}} (z(1 + t))dt ,
\end{align*}
for any non-negative integer $N$ and fixed complex numbers $z$, $\mu$ and $\nu$ satisfying $|\arg z| < \pi$, $\Re(\mu)+|\Re(\nu)| < 2N +1$ and $\Re (\mu ) < 2N + \frac{1}{2}$. Note that the convergence of the integral in the second line is guaranteed by Lemma \ref{lemma1} and the boundedness of the basic terminant. By a simple algebraic manipulation and an application of the beta integral (cf. \cite[eq. 5.12.3]{NIST}), we can assert that
\begin{align*}
& \int_0^{ + \infty } \frac{t^{\lambda  - 1} }{(1 + t)^{2N - \mu  + \lambda  + \frac{1}{2}} }{\bf F}\left( \nu  + \frac{1}{2}, - \nu  + \frac{1}{2};\lambda ; - \frac{t}{2} \right)\Pi _{2N - \mu  + \lambda  + \frac{1}{2}} (z(1 + t))dt 
\\  = \; & \int_0^{ + \infty } \frac{t^{\lambda  - 1} }{(1 + t)^{2N - \mu  + \lambda  + \frac{1}{2}} }\frac{1}{\Gamma (\lambda )}\Pi _{2N - \mu  + \lambda  + \frac{1}{2}} (z)dt 
\\ & + \int_0^{ + \infty } \frac{t^{\lambda  - 1} }{(1 + t)^{2N - \mu  + \lambda  + \frac{1}{2}}}\left( {\bf F}\left( \nu  + \frac{1}{2}, - \nu  + \frac{1}{2};\lambda ; - \frac{t}{2} \right)\Pi _{2N - \mu  + \lambda  + \frac{1}{2}} (z(1 + t)) \right. \\ & \left.- \frac{1}{\Gamma (\lambda )}\Pi _{2N - \mu  + \lambda  + \frac{1}{2}} (z) \right)dt
\\  = \; & \frac{\Gamma \left( 2N - \mu  + \frac{1}{2} \right)}{\Gamma \left( 2N - \mu  + \lambda  + \frac{1}{2} \right)}\Pi _{2N - \mu  + \lambda  + \frac{1}{2}} (z)
\\ & + \int_0^{ + \infty } \frac{t^{\lambda  - 1} }{(1 + t)^{2N - \mu  + \lambda  + \frac{1}{2}}}\left( {\bf F}\left( \nu  + \frac{1}{2}, - \nu  + \frac{1}{2};\lambda ; - \frac{t}{2} \right)\Pi _{2N - \mu  + \lambda  + \frac{1}{2}} (z(1 + t)) \right. \\ & \left. - \frac{1}{\Gamma (\lambda )}\Pi _{2N - \mu  + \lambda  + \frac{1}{2}} (z) \right)dt .
\end{align*}
By continuity, we have
\[
\mathop {\lim }\limits_{\lambda  \to 0 + } \frac{\Gamma \left( 2N - \mu  + \frac{1}{2} \right)}{\Gamma \left( 2N - \mu  + \lambda  + \frac{1}{2} \right)}\Pi _{2N - \mu  + \lambda  + \frac{1}{2}} (z) = \Pi _{2N - \mu  + \frac{1}{2}} (z),
\]
and thus, it remains to prove that
\begin{gather}\label{eq4}
\begin{split}
\mathop {\lim }\limits_{\lambda  \to 0 + } & \int_0^{ + \infty } \frac{t^{\lambda  - 1} }{(1 + t)^{2N - \mu  + \lambda  + \frac{1}{2}} }\left( {\bf F}\left( \nu  + \frac{1}{2}, - \nu  + \frac{1}{2};\lambda ; - \frac{t}{2} \right)\Pi _{2N - \mu  + \lambda  + \frac{1}{2}} (z(1 + t)) \right. \\ & \left. - \frac{1}{\Gamma (\lambda )}\Pi _{2N - \mu  + \lambda  + \frac{1}{2}} (z) \right)dt 
\\  =\; & \int_0^{ + \infty } \frac{t^{ - 1} }{(1 + t)^{2N - \mu  + \frac{1}{2}} }{\bf F}\left( \nu  + \frac{1}{2}, - \nu  + \frac{1}{2};0; - \frac{t}{2} \right)\Pi _{2N - \mu  + \frac{1}{2}} (z(1 + t))dt .
\end{split}
\end{gather}
We show that the absolute value of the integrand in the first two lines can be bounded pointwise by an absolutely integrable function uniformly with respect to bounded positive values of $\lambda$. Consequently, by the Lebesgue dominated convergence theorem \cite[Theorem 1.4.49, p. 111]{Tao}, the order of the limit and integration can be interchanged and the required result follows. Assume therefore that $\lambda$ is bounded, say $0 < \lambda \ll 1$. Then, by Taylor's theorem, we have
\[
{\bf F}\left( \nu  + \frac{1}{2}, - \nu  + \frac{1}{2};\lambda ; - \frac{t}{2} \right)\Pi _{2N - \mu  + \lambda  + \frac{1}{2}} (z(1 + t)) = \frac{1}{\Gamma (\lambda )}\Pi _{2N - \mu  + \lambda  + \frac{1}{2}} (z) + \mathcal{O}_{z,\mu ,\nu ,N} (t)
\]
as $t\to 0+$. Consequently, the integrand in the first two lines of \eqref{eq4} is $\mathcal{O}_{z,\mu,\nu,N} \left( {t^\lambda  } \right) = \mathcal{O}_{z,\mu,\nu,N} \left( 1 \right)$ as $t\to 0+$ and, appealing to Lemma \ref{lemma1} and to the boundedness of the basic terminant, it is $\mathcal{O}_{z,\mu,\nu,N} ( ( 1 + t)^{\left| \Re (\nu ) \right| - 2N + \Re (\mu )  - 2} \log ( 1 + t) ) + \mathcal{O}_{z,\mu,\nu,N} ( \left( {1 + t} \right)^{- 2N + \Re (\mu )  - \frac{3}{2}} )$ as $t\to+\infty$. Therefore, there exists a positive constant $C_{z,\mu,\nu,N}$, independent of $t$ and $\lambda$ such that the absolute value of the integrand in the first two lines of \eqref{eq4} is bounded from above by
\begin{equation}\label{eq5}
C_{z,\mu,\nu,N} \big(( 1 + t )^{\left| \Re (\nu ) \right| - 2N + \Re (\mu )  - 2} \log ( 1 + t) + ( 1 + t)^{- 2N + \Re (\mu )  - \frac{3}{2}} \big)
\end{equation}
for any $t>0$. Since $\Re(\mu)+|\Re(\nu)| < 2N +1$ and $\Re (\mu ) < 2N + \frac{1}{2}$, the function \eqref{eq5} is absolutely integrable on the positive $t$-axis and the proof is complete.

The representation \eqref{eq21} follows by combining formula \eqref{eq3} with the functional relation \eqref{eq20} and $-2 K'_\nu(t)= K_{\nu-1}(t)+K_{\nu+1}(t)$ (see, e.g., \cite[eq. 10.29.1]{NIST}).

\section{Proof of the error bounds}\label{section3}

In this section, we prove the bounds for the remainder terms $R_N^{(S)} (z,\mu,\nu)$ and $R_N^{(S')} (z,\mu,\nu)$ given in Theorems \ref{thm4}--\ref{thm6}. To this end, we shall state and prove a series of lemmata.

\begin{lemma}\label{lemma2} If $t$ is positive and $\nu$ and $\lambda$ are real numbers such that $\lambda  > \max \left( 0,\frac{1}{2} - \left| \nu  \right| \right)$, then
\[
{\bf F}\left( \nu  + \frac{1}{2}, - \nu  + \frac{1}{2};\lambda ; - \frac{t}{2} \right) \geq 0.
\]
\end{lemma}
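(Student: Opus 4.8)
The plan is to use the symmetry of the regularized hypergeometric function under $\nu\mapsto-\nu$ (which merely interchanges its two numerator parameters) to reduce to the case $\nu\ge0$, so that $|\nu|=\nu$, and then to prove the two regimes $0\le\nu<\tfrac12$ (where the hypothesis reads $\lambda>\tfrac12-\nu$) and $\nu\ge\tfrac12$ (where it reads $\lambda>0$) by one inductive mechanism resting on two ingredients only: the Euler integral representation and a contiguous relation in the denominator parameter.

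First I would introduce the family
\[
\mathcal{F}_k(c):={\bf F}\left(\nu+\tfrac12+k,\tfrac12-\nu+k;c;-\tfrac t2\right),\qquad k=0,1,2,\dots,
\]
so that the quantity to be bounded is $\mathcal{F}_0(\lambda)$. Combining the elementary identity $\frac{d}{dw}\big(w^{c-1}{\bf F}(a,b;c;w)\big)=w^{c-2}{\bf F}(a,b;c-1;w)$ with $\frac{d}{dw}{\bf F}(a,b;c;w)=ab\,{\bf F}(a+1,b+1;c+1;w)$ (applied with $c$ replaced by $c+1$) yields the three-term relation
\[
\mathcal{F}_k(c)=c\,\mathcal{F}_k(c+1)+\big((k+\tfrac12)^2-\nu^2\big)\left(-\tfrac t2\right)\mathcal{F}_{k+1}(c+2).
\]
The decisive observation is that the coefficient $\big((k+\tfrac12)^2-\nu^2\big)(-\tfrac t2)$ is \emph{nonnegative} exactly when $k+\tfrac12\le\nu$, hence for every index $k$ strictly below the least nonnegative integer $k_0$ with $k_0>\nu-\tfrac12$; moreover $\tfrac12-\nu+k_0\in(0,1]$.

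For the base of the induction, at $k=k_0$ the lower numerator parameter $\tfrac12-\nu+k_0$ is positive, so the Euler representation
\[
{\bf F}(a,b;c;w)=\frac{1}{\Gamma(b)\Gamma(c-b)}\int_0^1 s^{b-1}(1-s)^{c-b-1}(1-ws)^{-a}\,ds\qquad(c>b>0)
\]
applies to $\mathcal{F}_{k_0}(c)$; since $w=-\tfrac t2<0$, the integrand and the gamma factors are all positive, giving $\mathcal{F}_{k_0}(c)\ge0$ for every $c>\tfrac12-\nu+k_0$, in particular for all $c>1$. When $\nu<\tfrac12$ one has $k_0=0$ and this single step already proves the lemma under $\lambda>\tfrac12-\nu$. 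For $\nu\ge\tfrac12$ I would run a downward induction on $k$ from $k_0$ to $0$: iterating the three-term relation $m$ times gives the telescoped identity
\[
\mathcal{F}_k(c)=\frac{\Gamma(c+m)}{\Gamma(c)}\mathcal{F}_k(c+m)+\big((k+\tfrac12)^2-\nu^2\big)\left(-\tfrac t2\right)\sum_{l=0}^{m-1}\frac{\Gamma(c+l)}{\Gamma(c)}\mathcal{F}_{k+1}(c+l+2),
\]
and letting $m\to\infty$ one obtains $\mathcal{F}_k(c)=\frac{1}{\Gamma(c)}+\big((k+\tfrac12)^2-\nu^2\big)(-\tfrac t2)\sum_{l\ge0}\frac{\Gamma(c+l)}{\Gamma(c)}\mathcal{F}_{k+1}(c+l+2)$. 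For $0\le k\le k_0-1$ the coefficient is nonnegative and, by the inductive hypothesis, each $\mathcal{F}_{k+1}(c+l+2)\ge0$ (the arguments exceed $2\ge\tfrac12-\nu+k_0$, so the base case covers them), whence $\mathcal{F}_k(c)\ge\frac{1}{\Gamma(c)}>0$ for all $c>0$. Taking $k=0$ and $c=\lambda$ completes the proof.

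The differential identities and the positivity of the Euler integrand are routine; the step I expect to be the main obstacle is the passage $m\to\infty$ in the telescoped identity. It requires (i) the limit $\frac{\Gamma(c+m)}{\Gamma(c)}\mathcal{F}_k(c+m)\to\frac1{\Gamma(c)}$, which follows by dominated convergence once $c+m>\tfrac t2$ using $\Gamma(c+m)/\Gamma(c+m+n)\le(c+m)^{-n}$, and (ii) convergence of $\sum_l\frac{\Gamma(c+l)}{\Gamma(c)}\mathcal{F}_{k+1}(c+l+2)$, which rests on the uniform estimate $\mathcal{F}_{k+1}(c')=\mathcal{O}(1/\Gamma(c'))$ as $c'\to+\infty$, making the general term $\mathcal{O}(l^{-2})$. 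Both are standard but must be checked to make the limiting argument rigorous.
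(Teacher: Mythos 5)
Your overall architecture is sound, and it is necessarily different from the paper's treatment, since the paper gives no argument here at all: its ``proof'' is the citation \cite[Lemma 3.1]{Nemes2}. The reduction to $\nu \ge 0$, the contiguous relation ${\bf F}(a,b;c;w) = c\,{\bf F}(a,b;c+1;w) + ab\,w\,{\bf F}(a+1,b+1;c+2;w)$, the observation that the coefficient $\bigl((k+\tfrac12)^2-\nu^2\bigr)\bigl(-\tfrac{t}{2}\bigr)$ is nonnegative exactly when $k+\tfrac12 \le \nu$, the Euler-integral base case at $k_0$ (where indeed $\tfrac12-\nu+k_0 \in (0,1]$), and the telescoped identity are all correct; I verified each of them. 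The problem is the step you yourself flagged as the main obstacle: your justification of the limit (i) does not work, and as written this is a genuine gap.

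Concretely, you cannot prove $\Gamma(c+m)\,{\bf F}(a,b;c+m;-t/2) \to 1$ by dominated convergence on the Gauss series with the bound $\Gamma(c+m)/\Gamma(c+m+n) \le (c+m)^{-n}$, for two reasons. First, the series $\sum_n \frac{(a)_n(b)_n}{(c+m)_n\,n!}w^n$ has radius of convergence $1$ no matter how large $c+m$ is (its terms behave like $n^{a+b-c-m-1}w^n$), so for $t \ge 2$ the quantity $\mathcal{F}_k(c+m)$ is defined at $w = -t/2$ only by analytic continuation and there is no convergent series to dominate. Second, even for $t<2$ your proposed dominating series $\sum_n \frac{|(a)_n(b)_n|}{n!}\bigl(\tfrac{t/2}{c+m}\bigr)^n$ diverges, because $|(a)_n(b)_n|/n!$ grows like $n!$ times a power of $n$: the inequality you invoke discards precisely the factorial decay of $1/(c+m)_n$ that makes the true series converge. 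The limit itself is true --- it is the standard large-$c$ behaviour $F(a,b;c;w) \to 1$ for fixed $w \in \mathbb{C}\setminus[1,\infty)$, see \cite[\S 15.12(i)]{NIST} --- and a self-contained repair compatible with your argument is to apply Pfaff's transformation $F(a,b;c;w) = (1-w)^{-a}F(a,c-b;c;w/(w-1))$ first: it sends $w = -\tfrac{t}{2}$ to $\tfrac{t}{t+2} \in (0,1)$, and in the transformed series one has $\bigl|\tfrac{(c-b)_n}{(c)_n}\bigr| \le e^{|b|}(1+n)^{|b|}$ uniformly for $c \ge 1$, so dominated convergence is now legitimate and yields the limit $(1-w)^{-a}(1-w)^{a} = 1$. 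Your point (ii) needs no such repair: at the base level the Euler integral already gives $0 \le \mathcal{F}_{k_0}(c') \le 1/\Gamma(c')$, and this bound propagates down the induction through the limit identity because $\sum_{l\ge 0}\frac{1}{(c+l)(c+l+1)} = \frac{1}{c}$. With the limit step fixed as above (or simply cited), your proof is complete.
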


\begin{proof}
See \cite[Lemma 3.1]{Nemes2}.
\end{proof}

We remark that, by direct numerical computation, it can be verified that when $\left| \nu  \right| < \frac{1}{2}$ and $0<\lambda  < \frac{1}{2} - \left| \nu  \right|$, the quantity ${\bf F}\left( \nu  + \frac{1}{2}, - \nu  + \frac{1}{2};\lambda ; - \frac{t}{2} \right)$, as a function of $t>0$, may take both positive and negative values. Therefore, in general, the condition $\lambda  > \frac{1}{2} - \left| \nu  \right|$ in Lemma \ref{lemma2} cannot be weakened.

\begin{lemma}\label{lemma3} If $t$ is positive and $\nu$ is real, then
\[
{\bf F}\left( \nu  - \frac{1}{2}, - \nu  + \frac{3}{2};0; - \frac{t}{2} \right) + {\bf F}\left( \nu  + \frac{3}{2}, - \nu  - \frac{1}{2};0; - \frac{t}{2} \right)  \geq 0.
\]
\end{lemma}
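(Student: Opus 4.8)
The plan is to remove the apparent singularity at the vanishing third parameter by a single contiguous relation, thereby reducing the claim to the positivity already established in Lemma \ref{lemma2}. First I would record the elementary identity
\[
{\bf F}\left( a,b;0;w \right) = \frac{ab\,w}{1-w}\,{\bf F}\left( a,b;2;w \right), \qquad a+b=1,
\]
which follows from the Gauss contiguous relation linking the third parameters $0,1,2$ (the order-$1$ term drops out precisely because $1-a-b=0$) together with Euler's transformation. Both summands in the lemma have numerator parameters summing to $1$, so applying this with $w=-\tfrac t2$ and using $\tfrac{w}{1-w}=-\tfrac{t}{t+2}$ converts the asserted inequality into
\[
{\bf F}\left( \nu-\tfrac12,-\nu+\tfrac32;0;-\tfrac t2 \right)+{\bf F}\left( \nu+\tfrac32,-\nu-\tfrac12;0;-\tfrac t2 \right)=\frac{t}{t+2}\Big[(\nu-\tfrac12)(\nu-\tfrac32)D_{\nu-1}+(\nu+\tfrac12)(\nu+\tfrac32)D_{\nu+1}\Big],
\]
where I abbreviate $D_\mu:={\bf F}\left( \mu+\tfrac12,-\mu+\tfrac12;2;-\tfrac t2 \right)$. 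The reason for moving to order $2$ is that $2>\max\left( 0,\tfrac12-\left|\mu\right|\right)$ for every real $\mu$, so Lemma \ref{lemma2} (taken with $\lambda=2$) yields $D_\mu\ge0$ for all real $\mu$ and all $t>0$.

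Next I would use the invariance of the left-hand side under $\nu\mapsto-\nu$ (which merely interchanges the two terms) to assume $\nu\ge0$, and split on the signs of the two coefficients. When $\nu\ge\tfrac32$ both $(\nu-\tfrac12)(\nu-\tfrac32)$ and $(\nu+\tfrac12)(\nu+\tfrac32)$ are nonnegative, so the bracket is a nonnegative combination of the nonnegative quantities $D_{\nu\mp1}$ and nothing more is needed. The genuine obstacle is the band $\tfrac12<\nu<\tfrac32$, where the coefficient of $D_{\nu-1}$ turns negative and one must show that the $D_{\nu+1}$ contribution dominates.

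To resolve this I would invoke the three-term (diagonal) contiguous relation at fixed order $2$,
\[
(\nu-\tfrac32)D_{\nu-1}+(\nu+\tfrac32)D_{\nu+1}=2\nu\left( 1+t \right)D_{\nu},
\]
which is a standard consequence of the contiguous relations for the hypergeometric function and is checked immediately order-by-order in $t$. Substituting $(\nu+\tfrac32)D_{\nu+1}=2\nu(1+t)D_\nu-(\nu-\tfrac32)D_{\nu-1}$ into the bracket, the $D_{\nu-1}$ terms coalesce and leave
\[
(\nu-\tfrac12)(\nu-\tfrac32)D_{\nu-1}+(\nu+\tfrac12)(\nu+\tfrac32)D_{\nu+1}=2\nu\left( \nu+\tfrac12 \right)\left( 1+t \right)D_{\nu}+\left( \tfrac32-\nu \right)D_{\nu-1}.
\]
For $0\le\nu\le\tfrac32$ every factor on the right is nonnegative — $2\nu(\nu+\tfrac12)(1+t)\ge0$, $\tfrac32-\nu\ge0$, and $D_\nu,D_{\nu-1}\ge0$ by Lemma \ref{lemma2} — so the bracket, and hence the original sum, is nonnegative; together with the case $\nu\ge\tfrac32$ and the symmetry in $\nu$ this settles every real $\nu$. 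The delicate point is thus entirely the sign bookkeeping in the band $\tfrac12<\nu<\tfrac32$, which the recurrence disposes of by trading the "bad" term $D_{\nu+1}$ for the manifestly nonnegative pair $D_\nu,D_{\nu-1}$; establishing the reduction identity and the three-term relation is the only real work, and both are routine consequences of the classical contiguous relations.
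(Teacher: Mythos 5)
Your proof is correct, but it takes a genuinely different route from the paper: the paper offers no argument for this lemma at all, deferring to Lemma 3.2 of \cite{Nemes2}, whereas you reduce the statement, inside the paper's own toolkit, to Lemma \ref{lemma2}. Both of your key identities check out. The reduction ${\bf F}\left( a,b;0;w \right)=\frac{ab\,w}{1-w}\,{\bf F}\left( a,b;2;w \right)$ for $a+b=1$ follows from the degeneracy ${\bf F}\left( a,b;0;w \right)=ab\,w\,{\bf F}\left( a+1,b+1;2;w \right)$ combined with Euler's transformation (equivalently, from the $c$-contiguous relation at $c=1$, whose middle term indeed vanishes precisely when $a+b=1$), and applying it to both summands turns the asserted sum into $\frac{t}{t+2}\bigl[(\nu-\tfrac12)(\nu-\tfrac32)D_{\nu-1}+(\nu+\tfrac12)(\nu+\tfrac32)D_{\nu+1}\bigr]$, where $D_\mu={\bf F}\left( \mu+\tfrac12,-\mu+\tfrac12;2;-\tfrac t2 \right)\ge 0$ for every real $\mu$ by Lemma \ref{lemma2} with $\lambda=2$. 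The diagonal recurrence $(\nu-\tfrac32)D_{\nu-1}+(\nu+\tfrac32)D_{\nu+1}=2\nu(1+t)D_\nu$ is also a valid identity: writing it out in Pochhammer coefficients it reduces to the polynomial identity $-(a-2)(n+1-a)(n-a)-(a+1)(a+n)(a+n-1)=(2a-1)\bigl[(a+n-1)(n-a)-2n(n+1)\bigr]$ with $a=\nu+\tfrac12$, which holds for every $n$, not just the first few orders. Your substitution then correctly yields $2\nu(\nu+\tfrac12)(1+t)D_\nu+(\tfrac32-\nu)D_{\nu-1}$, which is manifestly nonnegative for $0\le\nu\le\tfrac32$; together with the direct case $\nu\ge\tfrac32$ and the $\nu\mapsto-\nu$ symmetry this covers all real $\nu$. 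One point you should make explicit: your identities are verified as power series, i.e.\ for $0<t<2$ only; they persist for all $t>0$ because every function involved is analytic in $t$ on $(0,\infty)$ (the branch cut of ${\bf F}\left( \cdot,\cdot;c;-\tfrac t2 \right)$ lies on $t\le-2$), and Lemma \ref{lemma2} is stated for all $t>0$, so the conclusion extends by analytic continuation. What your argument buys is self-containedness: the lemma is derived from Lemma \ref{lemma2} and classical contiguous relations alone, rather than by citation to the external Hankel--Bessel paper.
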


\begin{proof}
See \cite[Lemma 3.2]{Nemes2}.
\end{proof}

\begin{lemma} Let $N$ be a non-negative integer and let $\mu$, $\nu$ and $\lambda$ be arbitrary complex numbers such that $\Re (\mu ) + \left| {\Re (\nu )} \right| < 2N + 1$ and $\Re (\lambda ) > 0$. Then
\begin{equation}\label{eq30}
a_N ( - \mu ,\nu ) = \frac{2^{\mu  + \frac{1}{2}} \pi ^{\frac{1}{2}} \Gamma \left( 2N - \mu  + \lambda  + \frac{1}{2} \right)}{\Gamma \left( \frac{- \mu  + \nu  + 1}{2} \right)\Gamma \left( \frac{ - \mu  - \nu  + 1}{2} \right)}\int_0^{ + \infty } \frac{t^{\lambda  - 1} }{(1 + t)^{2N - \mu  + \lambda  + \frac{1}{2}}} {\bf F}\left( \nu  + \frac{1}{2}, - \nu  + \frac{1}{2};\lambda ; - \frac{t}{2} \right)dt ,
\end{equation}
provided $\Re (\mu ) < 2N + \Re (\lambda ) + \frac{1}{2}$, and
\begin{gather}\label{eq31}
\begin{split}
b_N ( - \mu ,\nu ) =  - \frac{2^{\mu  - \frac{1}{2}} \pi ^{\frac{1}{2}} \Gamma \left( 2N - \mu  + \lambda  + \frac{3}{2} \right)}{\Gamma \left( \frac{ - \mu  + \nu  + 1}{2} \right)\Gamma \left( \frac{ - \mu  - \nu  + 1}{2} \right)} & \int_0^{ + \infty } \frac{t^{\lambda  - 1} }{(1 + t)^{2N - \mu  + \lambda  + \frac{3}{2}} }\left( {\bf F}\left( \nu  - \frac{1}{2}, - \nu  + \frac{3}{2};\lambda ; - \frac{t}{2} \right) \right.
\\ & \left. + \; {\bf F}\left( \nu  + \frac{3}{2}, - \nu  - \frac{1}{2};\lambda ; - \frac{t}{2} \right) \right)dt ,
\end{split}
\end{gather}
\begin{gather}\label{eq32}
\begin{split}
b_N ( - \mu ,\nu ) =  - \frac{2^{\mu  - \frac{1}{2}} \pi ^{\frac{1}{2}} \Gamma \left( 2N - \mu  + \frac{3}{2} \right)}{\Gamma \left( \frac{{ - \mu  + \nu  + 1}}{2} \right)\Gamma \left( \frac{ - \mu  - \nu  + 1}{2} \right)}& \left( 2  + \int_0^{ + \infty } \frac{t^{ - 1} }{(1 + t)^{2N - \mu  + \frac{3}{2}} }\left( {\bf F}\left( \nu  - \frac{1}{2}, - \nu  + \frac{3}{2};0; - \frac{t}{2} \right) \right.\right.
\\ & \left.\left. + \; {\bf F}\left( \nu  + \frac{3}{2}, - \nu  - \frac{1}{2};0; - \frac{t}{2} \right) \right)dt  \right) .
\end{split}
\end{gather}
\end{lemma}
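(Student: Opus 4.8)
The plan is to read off all three identities as the values of the exact remainder representations of Theorems \ref{thm1}--\ref{thm3} in the limit $z\to+\infty$ along the positive real axis. Two facts drive the argument. First, from the integral representation \eqref{eq01}, letting $w\to+\infty$ and using $\tfrac{1}{1+(t/w)^2}\to 1$ together with dominated convergence gives $\Pi_p(w)\to 1$ whenever $\Re(p)>0$; under the stated hypotheses the orders $2N-\mu+\lambda+\frac12$ and $2N-\mu+\lambda+\frac32$ have positive real part, so this applies. Second, since \eqref{eq1} and \eqref{eq2} are genuine asymptotic expansions and the remainders are defined by \eqref{eq22} and \eqref{eq76}, one has $R_N^{(S)}(z,\mu,\nu)\sim(-1)^N a_N(-\mu,\nu)z^{-2N}$ and $R_N^{(S')}(z,\mu,\nu)\sim(-1)^N b_N(-\mu,\nu)z^{-2N}$, so that $z^{2N}R_N^{(S)}\to(-1)^N a_N(-\mu,\nu)$ and $z^{2N}R_N^{(S')}\to(-1)^N b_N(-\mu,\nu)$ as $z\to+\infty$.

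For \eqref{eq30} I would multiply \eqref{eq6} by $z^{2N}$ and let $z\to+\infty$. On the positive axis $z(1+t)\to+\infty$ uniformly for $t>0$ once $z$ exceeds a fixed bound, so $\Pi_{2N-\mu+\lambda+\frac12}(z(1+t))$ is uniformly bounded and tends pointwise to $1$; combining the growth estimate of Lemma \ref{lemma1} with the factor $(1+t)^{-(2N-\mu+\lambda+\frac12)}$ produces an integrable majorant (the condition $\Re(\mu)+|\Re(\nu)|<2N+1$ is exactly what makes the $t\to+\infty$ exponent $\Re(\mu)+|\Re(\nu)|-2N-2$ less than $-1$, while $\Re(\lambda)>0$ controls $t\to0+$), just as in the proof of Theorem \ref{thm1}. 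Hence the Lebesgue dominated convergence theorem lets the limit pass inside the integral, and cancelling $(-1)^N$ yields \eqref{eq30}.

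For \eqref{eq31} and \eqref{eq32} I would first record the derivative analogues of Theorems \ref{thm1} and \ref{thm2}. Substituting $K'_\nu(t)=-\frac12(K_{\nu-1}(t)+K_{\nu+1}(t))$ together with the hypergeometric representation of $K_\nu$ from Section \ref{section2} into \eqref{eq21}, interchanging the order of integration (justified by Lemma \ref{lemma1}) and evaluating the inner integral by \eqref{eq01}, one obtains
\begin{align*}
R_N^{(S')} (z,\mu ,\nu ) = \; & ( - 1)^{N+1} \frac{2^{\mu  - \frac{1}{2}} \pi ^{\frac{1}{2}} \Gamma \left( 2N - \mu  + \lambda  + \frac{3}{2} \right)}{\Gamma \left( \frac{ - \mu  + \nu  + 1}{2} \right)\Gamma \left( \frac{ - \mu  - \nu  + 1}{2} \right)}\frac{1}{z^{2N}} \\ & \times \int_0^{ + \infty } \frac{t^{\lambda  - 1} }{(1 + t)^{2N - \mu  + \lambda  + \frac{3}{2}} }\left( {\bf F}\left( \nu  - \frac{1}{2}, - \nu  + \frac{3}{2};\lambda ; - \frac{t}{2} \right) + {\bf F}\left( \nu  + \frac{3}{2}, - \nu  - \frac{1}{2};\lambda ; - \frac{t}{2} \right) \right) \Pi _{2N - \mu  + \lambda  + \frac{3}{2}} (z(1 + t))\,dt .
\end{align*}
Letting $z\to+\infty$ in this display exactly as in the previous paragraph (applying Lemma \ref{lemma1} now with the shifts $\nu\mapsto\nu\mp1$, which still gives the integrability exponent $\Re(\mu)+|\Re(\nu)|-2N-2<-1$) produces \eqref{eq31}.

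Finally, for \eqref{eq32} I would let $\lambda\to0+$ in the preceding display, reproducing the device from the proof of Theorem \ref{thm2}. Since each hypergeometric factor equals $1/\Gamma(\lambda)$ at $t=0$, I would split off the leading term $\frac{2}{\Gamma(\lambda)}\int_0^{+\infty}t^{\lambda-1}(1+t)^{-(2N-\mu+\lambda+\frac32)}\,dt$, evaluate it by the beta integral as $\frac{2\,\Gamma(2N-\mu+\frac32)}{\Gamma(2N-\mu+\lambda+\frac32)}$ (this is the source of the constant $2$ in \eqref{eq32}), and handle the remaining integral by dominated convergence with a majorant of the same type as \eqref{eq5}, using that $1/\Gamma(\lambda)\to0$ as $\lambda\to0$; taking $z\to+\infty$ then gives \eqref{eq32}. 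The only genuinely delicate points are the two dominated-convergence justifications: the uniform boundedness of the basic terminant on the range $z(1+t)$ and the construction of an integrable majorant valid near both $t=0$ and $t=+\infty$. Both, however, are already supplied by the arguments of Section \ref{section2}, so the remaining work is routine.
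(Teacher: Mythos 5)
Your proposal is correct in its main lines, but it takes a genuinely different route from the paper, and one step needs repair. The paper's proof is purely algebraic and involves no limiting processes: from \eqref{eq22} it extracts the telescoping identity $(-1)^N z^{2N}\bigl(R_N^{(S)}(z,\mu,\nu)-R_{N+1}^{(S)}(z,\mu,\nu)\bigr)=a_N(-\mu,\nu)$, substitutes the representation \eqref{eq6} for both remainders, and invokes the recurrence $p(p+1)\Pi_{p+2}(w)=w^2(1-\Pi_p(w))$, under which the terminants cancel identically at finite $z$, leaving \eqref{eq30}; then \eqref{eq31} follows by inserting \eqref{eq30} into the coefficient relation $2b_N(-\mu,\nu)=(\mu+\nu-1)a_N(-\mu+1,\nu-1)+(\mu-\nu-1)a_N(-\mu+1,\nu+1)$, and \eqref{eq32} follows analogously from \eqref{eq33}. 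You instead obtain the coefficients as limits, via $z^{2N}R_N^{(S)}\to(-1)^N a_N(-\mu,\nu)$ and $z^{2N}R_N^{(S')}\to(-1)^N b_N(-\mu,\nu)$ as $z\to+\infty$, together with $\Pi_p(w)\to1$ on the positive axis and dominated convergence. This is a viable alternative; what it costs is (i) an extra analytic input, namely that \eqref{eq1} and \eqref{eq2} really are asymptotic expansions (quoted from Watson), and (ii) for \eqref{eq31}, the general-$\lambda$ analogue of \eqref{eq6} for $R_N^{(S')}$, which the paper never states but which you derive correctly from \eqref{eq21} by the same interchange-of-integration argument as in Theorem \ref{thm1}. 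Your integrability bookkeeping (exponent $\Re(\mu)+|\Re(\nu)|-2N-2<-1$ at infinity, $\Re(\lambda)>0$ at the origin, uniform boundedness of the terminant on the positive axis) is exactly right, so your proofs of \eqref{eq30} and \eqref{eq31} are sound. The paper's argument is shorter and needs no convergence theorems; yours makes transparent why the identities hold, the right-hand sides being literally the $z\to+\infty$ limits of the remainder representations.

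The step for \eqref{eq32}, however, is flawed as written. Write ${\bf F}_1+{\bf F}_2$ for the sum of the two regularized hypergeometric functions in your display. You let $\lambda\to0+$ at fixed $z$ and subtract the constant $\frac{2}{\Gamma(\lambda)}$, but this constant matches only the hypergeometric factors at $t=0$, not the full integrand: near $t=0$ the bracket $({\bf F}_1+{\bf F}_2)\Pi_{2N-\mu+\lambda+\frac{3}{2}}(z(1+t))-\frac{2}{\Gamma(\lambda)}$ tends to $\frac{2}{\Gamma(\lambda)}\bigl(\Pi_{2N-\mu+\lambda+\frac{3}{2}}(z)-1\bigr)$, which is not $O(t)$. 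A neighbourhood of $t=0$ therefore contributes $\frac{2}{\Gamma(\lambda+1)}\bigl(\Pi_{2N-\mu+\lambda+\frac{3}{2}}(z)-1\bigr)(1+o(1))\to 2\bigl(\Pi_{2N-\mu+\frac{3}{2}}(z)-1\bigr)\neq0$ to the remaining integral, so no integrable majorant uniform in $\lambda$ exists and dominated convergence fails; the true $\lambda\to0+$ limit of your fixed-$z$ representation is \eqref{eq40}, which carries $2\Pi_{2N-\mu+\frac{3}{2}}(z)$ where you claim $2$. Your final formula \eqref{eq32} survives only because this lost term happens to vanish in the subsequent $z\to+\infty$ limit; the intermediate identity you pass through is false. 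Two immediate repairs: either subtract $\frac{2}{\Gamma(\lambda)}\Pi_{2N-\mu+\lambda+\frac{3}{2}}(z)$, terminant included — this is the actual device of Theorem \ref{thm2} that you say you are reproducing — or, more simply within your scheme, reverse the order of the limits: first take $z\to+\infty$ to obtain \eqref{eq31}, then let $\lambda\to0+$ in \eqref{eq31} itself, where no terminants remain and your split, the beta-integral evaluation, and an \eqref{eq5}-type majorant are exactly correct.
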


\begin{proof} From \eqref{eq22}, we can assert that
\[
a_N ( - \mu ,\nu ) = ( - 1)^N z^{2N} (R_{N + 1}^{(S)} (z,\mu ,\nu ) - R_N^{(S)} (z,\mu ,\nu ))
\]
for any $N \geq 0$. Substituting the integral representation \eqref{eq6} into the right-hand side of this equality and employing the relation $p(p + 1)\Pi _{p + 2} (w) = w^2 (1 - \Pi _p (w))$ (cf. \cite[eq. (44), p. 415]{Dingle}), we arrive at \eqref{eq30}.

In order to prove \eqref{eq31}, we substitute \eqref{eq30} into the right-hand side of the relation
\[
2b_N ( - \mu ,\nu ) = (\mu  + \nu  - 1)a_N ( - \mu  + 1,\nu  - 1) + (\mu  - \nu  - 1)a_N ( - \mu  + 1,\nu  + 1),
\]
which itself is a consequence of the connection formula $2S'_{\mu ,\nu } (z) = (\mu  + \nu  - 1)S_{\mu  - 1,\nu  - 1} (z) + (\mu  - \nu  - 1)S_{\mu  - 1,\nu  + 1} (z)$.

Formula \eqref{eq32} follows from \eqref{eq33} by an argument similar to the derivation of \eqref{eq31} from \eqref{eq6}; the details are left to the reader.
\end{proof}

We continue with the proof of the bound \eqref{eq60}. Let $N$ be a non-negative integer and let $\mu$, $\nu$ and $\lambda$ be arbitrary real numbers such that $\mu+|\nu| < 2N +1$, $\mu < 2N +\lambda+ \frac{1}{2}$ and $\lambda  \ge \max \left( 0,\frac{1}{2} - \left| \nu  \right| \right)$, respectively. From \eqref{eq6}, using Lemma \ref{lemma2}, we infer that
\begin{align*}
\big| R_N^{(S)} (z,\mu ,\nu ) \big| \le \; & \frac{2^{\mu  + \frac{1}{2}} \pi ^{\frac{1}{2}} \Gamma \left( 2N - \mu  + \lambda  + \frac{1}{2} \right)}{\left| \Gamma \left( \frac{ - \mu  + \nu  + 1}{2} \right)\Gamma \left( \frac{ - \mu  - \nu  + 1}{2} \right) \right|} \frac{1}{|z|^{2N}} \\ & \times \int_0^{ + \infty } \frac{t^{\lambda  - 1} }{(1 + t)^{2N - \mu  + \lambda  + \frac{1}{2}} }{\bf F}\left( \nu  + \frac{1}{2}, - \nu  + \frac{1}{2};\lambda ; - \frac{t}{2} \right)dt \mathop {\sup }\limits_{r \ge 1} \big| {\Pi _{2N - \mu  + \lambda  + \frac{1}{2}} (zr)} \big|,
\end{align*}
as long as $|\arg z|<\pi$. The right-hand side can be simplified using Lemma \ref{lemma2} and the representation \eqref{eq30} for the coefficients $a_N ( - \mu ,\nu )$, and we deduce the required bound \eqref{eq60}.

The bound \eqref{eq27} can be proved as follows. Assume that $N$ is a non-negative integer and $\mu$, $\nu$ are arbitrary real numbers such that $\mu+|\nu| < 2N +1$. By substituting \eqref{eq33} into the functional relation \eqref{eq20}, we find
\begin{gather}\label{eq40}
\begin{split}
R_N^{(S')} (z,\mu ,\nu ) = \; & - \frac{2^{\mu  - \frac{1}{2}} \pi ^{\frac{1}{2}} \Gamma \left( 2N - \mu  + \frac{3}{2} \right)}{\Gamma \left( \frac{{ - \mu  + \nu  + 1}}{2} \right)\Gamma \left( \frac{ - \mu  - \nu  + 1}{2} \right)} \frac{1}{z^{2N}}\\ & \times \left( 2\Pi _{2N - \mu  + \frac{3}{2}} (z)  + \int_0^{ + \infty } \frac{t^{ - 1} }{(1 + t)^{2N - \mu  + \frac{3}{2}} }\left( {\bf F}\left( \nu  - \frac{1}{2}, - \nu  + \frac{3}{2};0; - \frac{t}{2} \right) \right.\right.
\\ &  \left.\left. +\; {\bf F}\left( \nu  + \frac{3}{2}, - \nu  - \frac{1}{2};0; - \frac{t}{2} \right) \right)\Pi _{2N - \mu  + \frac{3}{2}} (z(1 + t)) dt  \right) .
\end{split}
\end{gather}
By Lemma \ref{lemma3}, we can then assert that
\begin{align*}
\big| R_N^{(S')} (z,\mu ,\nu ) \big| \leq \; & \frac{2^{\mu  - \frac{1}{2}} \pi ^{\frac{1}{2}} \Gamma \left( 2N - \mu  + \frac{3}{2} \right)}{\left|\Gamma \left( \frac{{ - \mu  + \nu  + 1}}{2} \right)\Gamma \left( \frac{ - \mu  - \nu  + 1}{2} \right)\right|} \frac{1}{|z|^{2N}}\\ & \times \left( 2  + \int_0^{ + \infty } \frac{t^{ - 1} }{(1 + t)^{2N - \mu  + \frac{3}{2}} }\left( {\bf F}\left( \nu  - \frac{1}{2}, - \nu  + \frac{3}{2};0; - \frac{t}{2} \right) \right.\right.
\\ &  \left.\left. +\; {\bf F}\left( \nu  + \frac{3}{2}, - \nu  - \frac{1}{2};0; - \frac{t}{2} \right) \right) dt  \right) \mathop {\sup }\limits_{r \ge 1} \big| {\Pi _{2N - \mu  + \frac{3}{2}} (zr)} \big|,
\end{align*}
provided $|\arg z|<\pi$. The right-hand side of this inequality can be simplified using Lemma \ref{lemma3} and the representation \eqref{eq32} for the coefficients $b_N ( - \mu ,\nu )$ leading to the required estimate \eqref{eq27}.

To prove formula \eqref{eq41}, we first note that $0<\Pi_p(w) < 1$ whenever $w>0$ and $p >0$ (see Proposition \ref{prop1}). Employing Lemma \ref{lemma3}, the mean value theorem of integration and the fact that the terms in the large parentheses in \eqref{eq40} are all non-negative, we can infer that
\begin{align*}
R_N^{(S')} (z,\mu ,\nu ) = \; & - \frac{2^{\mu  - \frac{1}{2}} \pi ^{\frac{1}{2}} \Gamma \left( 2N - \mu  + \frac{3}{2} \right)}{\Gamma \left( \frac{{ - \mu  + \nu  + 1}}{2} \right)\Gamma \left( \frac{ - \mu  - \nu  + 1}{2} \right)} \frac{1}{z^{2N}}\\ & \times \left( 2  + \int_0^{ + \infty } \frac{t^{ - 1} }{(1 + t)^{2N - \mu  + \frac{3}{2}} }\left( {\bf F}\left( \nu  - \frac{1}{2}, - \nu  + \frac{3}{2};0; - \frac{t}{2} \right) \right.\right.
\\ &  \left.\left. +\; {\bf F}\left( \nu  + \frac{3}{2}, - \nu  - \frac{1}{2};0; - \frac{t}{2} \right) \right) dt  \right) \theta_N^{(S')}(z,\mu,\nu),
\end{align*}
for some $0<\theta_N^{(S')}(z,\mu,\nu)<1$ depending on $z$, $\mu$, $\nu$ and $N$. Comparing this expression with the representation \eqref{eq32} for the coefficients $b_N ( - \mu ,\nu )$ yields the required formula \eqref{eq41} for $R_N^{(S')} (z,\mu ,\nu )$.

We now turn to the proof of the bound \eqref{eq43}. Assume that $N$ is a non-negative integer and $\mu$ and $\nu$ are arbitrary complex numbers satisfying $\Re(\mu)+|\Re(\nu)| < 2N +1$. First note that since
\[
{\bf F}\left( \nu  + \frac{1}{2}, - \nu  + \frac{1}{2};\frac{1}{2}; - s \right) = \frac{(\left( 1 + s \right)^{\frac{1}{2}}  + s^{\frac{1}{2}} )^{2\nu }  + (\left( 1 + s \right)^{\frac{1}{2}}  - s^{\frac{1}{2}} )^{2\nu } }{2\left( 1 + s \right)^{\frac{1}{2}} }
\]
for any $s>0$ (see, for instance, \cite[eq. 15.4.13]{NIST}), the following inequality holds:
\begin{equation}\label{eq03}
\left| {\bf F}\left( \nu  + \frac{1}{2}, - \nu  + \frac{1}{2};\frac{1}{2}; - s \right) \right| \le {\bf F}\left( \Re \left( \nu  \right) + \frac{1}{2}, - \Re \left( \nu  \right) + \frac{1}{2};\frac{1}{2}; - s \right).
\end{equation}
Taking $\lambda=\frac{1}{2}$ in \eqref{eq6} and employing \eqref{eq03} with $\frac{t}{2}$ in place of $s$, we find that
\begin{align*}
& \big| R_N^{(S)} (z,\mu ,\nu ) \big| \le \frac{2^{\Re(\mu)  + \frac{1}{2}} \pi ^{\frac{1}{2}} |\Gamma ( 2N - \mu  + 1 )|}{\left| \Gamma \left( \frac{ - \mu  + \nu  + 1}{2} \right)\Gamma \left( \frac{ - \mu  - \nu  + 1}{2} \right) \right|} \frac{1}{|z|^{2N}} \\ & \times \int_0^{ + \infty } \frac{t^{-\frac{1}{2}} }{(1 + t)^{2N - \Re(\mu)  + 1} }{\bf F}\left( \Re(\nu)  + \frac{1}{2}, - \Re(\nu)  + \frac{1}{2};\frac{1}{2} ; - \frac{t}{2} \right)dt \mathop {\sup }\limits_{r \ge 1} \big| {\Pi _{2N - \mu  + 1} (zr)} \big|,
\end{align*}
as long as $|\arg z|<\pi$. Now the required bound \eqref{eq43} can be obtained by simplifying the right-hand side of the above inequality using the representation \eqref{eq30} (with $\lambda=\frac{1}{2}$) for the coefficients $a_N ( - \mu ,\nu )$ (with $\Re(\mu)$ and $\Re(\nu)$ in place of $\mu$ and $\nu$).

We close this section by proving the estimate \eqref{eq44}. For this purpose, suppose that $N$ is a non-negative integer and $\mu$ and $\nu$ are arbitrary complex numbers such that $\Re(\mu)+|\Re(\nu)| < 2N +1$. By taking $\lambda=\frac{1}{2}$ in \eqref{eq6} and substituting the result into the functional relation \eqref{eq20}, we obtain
\begin{align*}
R_N^{(S')} (z,\mu ,\nu ) = \; & - \frac{2^{\mu  - \frac{1}{2}} \pi ^{\frac{1}{2}} \Gamma ( 2N - \mu  + 2)}{\Gamma \left( \frac{{ - \mu  + \nu  + 1}}{2} \right)\Gamma \left( \frac{ - \mu  - \nu  + 1}{2} \right)} \frac{1}{z^{2N}}  \\ & \times \int_0^{ + \infty } \frac{t^{ - \frac{1}{2}} }{(1 + t)^{2N - \mu  + 2} }\left( {\bf F}\left( \nu  - \frac{1}{2}, - \nu  + \frac{3}{2};\frac{1}{2}; - \frac{t}{2} \right) \right.
\\ &  \left. +\; {\bf F}\left( \nu  + \frac{3}{2}, - \nu  - \frac{1}{2};\frac{1}{2}; - \frac{t}{2} \right) \right)\Pi _{2N - \mu  + 2} (z(1 + t)) dt.
\end{align*}
An application of the inequality \eqref{eq03} yields
\begin{align*}
\big| R_N^{(S')} (z,\mu ,\nu ) \big| \leq \; & \frac{2^{\Re(\mu)  - \frac{1}{2}} \pi ^{\frac{1}{2}} |\Gamma ( 2N - \mu  + 2)|}{\left|\Gamma \left( \frac{{ - \mu  + \nu  + 1}}{2} \right)\Gamma \left( \frac{ - \mu  - \nu  + 1}{2} \right)\right|} \frac{1}{|z|^{2N}} \\ & \times \int_0^{ + \infty } \frac{t^{ - \frac{1}{2}} }{(1 + t)^{2N - \Re(\mu)  + 2} }\left( {\bf F}\left( \Re(\nu)  - \frac{1}{2}, - \Re(\nu)  + \frac{3}{2};\frac{1}{2}; - \frac{t}{2} \right) \right.
\\ &  \left. +\; {\bf F}\left( \Re(\nu)  + \frac{3}{2}, - \Re(\nu)  - \frac{1}{2};\frac{1}{2}; - \frac{t}{2} \right) \right) dt \mathop {\sup }\limits_{r \ge 1} \big| {\Pi _{2N - \mu  + 2} (zr)} \big|,
\end{align*}
for $|\arg z|<\pi$. We then simplify the right-hand side of this inequality using the representation \eqref{eq31} (with $\lambda=\frac{1}{2}$) for the coefficients $b_N ( - \mu ,\nu )$ (with $\Re(\mu)$ and $\Re(\nu)$ in place of $\mu$ and $\nu$) to arrive at the required estimate \eqref{eq44}.

\section{Proof of the error bounds for the re-expansions}\label{section4}

In this section, we prove the bounds for the remainder terms $R_{N,M}^{(S)} (z,\mu ,\nu )$ and $R_{N,M}^{(S')} (z,\mu ,\nu )$ that are given in Theorems \ref{thm7} and \ref{thm8}. We begin by stating the following lemmata, whose proof can be found in \cite{Nemes2}.

\begin{lemma} Let $M$ be a non-negative integer and let $\nu$ be an arbitrary complex number. Then
\begin{equation}\label{eq50}
a_M (\nu) = (-1)^M \left( \frac{2}{\pi} \right)^{\frac{1}{2}} \frac{\cos ( \pi \nu)}{\pi}\int_0^{ + \infty } t^{M - \frac{1}{2}} e^{ - t} K_\nu (t)dt,
\end{equation}
provided $|\Re(\nu)|<M+\frac{1}{2}$, and
\begin{equation}\label{eq51}
b_M (\nu) = (-1)^M \left( \frac{2}{\pi} \right)^{\frac{1}{2}} \frac{\cos (\pi \nu)}{\pi }\int_0^{ + \infty } t^{M - \frac{1}{2}} e^{ - t} K'_\nu (t)dt,
\end{equation}
provided that $|\Re(\nu)|<M-\frac{1}{2}$ and $M \geq 1$.
\end{lemma}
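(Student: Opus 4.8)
My plan is to reduce both \eqref{eq50} and \eqref{eq51} to a single Laplace-type integral evaluation and then reconcile the result with the known closed forms of the coefficients. From the definition \eqref{eq54} together with \cite[\S 10.40(i)]{NIST}, the coefficient $a_M(\nu)$ has the explicit representation
\[
a_M(\nu) = \frac{(-1)^M}{2^M M!}\left(\nu+\tfrac12\right)_M\left(\tfrac12-\nu\right)_M ,
\]
and it is this product that I would aim to recover from the right-hand side of \eqref{eq50}. The one analytic ingredient I would isolate is the master formula
\[
\int_0^{+\infty} e^{-t}t^{s-1}K_\nu(t)\,dt = \frac{\sqrt{\pi}}{2^{s}}\frac{\Gamma(s+\nu)\Gamma(s-\nu)}{\Gamma\left(s+\tfrac12\right)},\qquad \Re(s)>\left|\Re(\nu)\right| .
\]

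To establish the master formula I would insert the integral representation $K_\nu(t)=\int_0^{+\infty} e^{-t\cosh x}\cosh(\nu x)\,dx$ \cite[eq. 10.32.9]{NIST}, interchange the order of integration (justified by absolute convergence in the stated strip), perform the inner $t$-integration to produce $\Gamma(s)(1+\cosh x)^{-s}$, and evaluate the remaining integral $\int_0^{+\infty}\cosh(\nu x)(1+\cosh x)^{-s}\,dx$ in closed form as a quotient of gamma functions (a standard $\cosh$-power integral) after the substitution $1+\cosh x = 2\cosh^2(x/2)$. The behaviour $K_\nu(t)=\mathcal{O}(t^{-|\Re(\nu)|})$ as $t\to 0+$ shows that the integral on the left converges precisely when $\Re(s)>|\Re(\nu)|$; setting $s=M+\tfrac12$ this is exactly the hypothesis $|\Re(\nu)|<M+\tfrac12$. (Alternatively, the master formula is the confluent, argument-zero specialisation of the Weber--Schafheitlin-type Laplace transform of $K_\nu$, in which the embedded Gauss function collapses to unity because its variable vanishes when the two exponential rates coincide.)

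With the master formula in hand, \eqref{eq50} follows by routine simplification: taking $s=M+\tfrac12$ gives
\[
\int_0^{+\infty} e^{-t}t^{M-\frac12}K_\nu(t)\,dt = \frac{\sqrt{\pi}}{2^{M}\sqrt{2}\,M!}\Gamma\!\left(M+\nu+\tfrac12\right)\Gamma\!\left(M-\nu+\tfrac12\right),
\]
and substituting this into the right-hand side of \eqref{eq50}, the prefactors combine to $(-1)^M\cos(\pi\nu)/\pi$ times $2^{-M}(M!)^{-1}\Gamma(M+\nu+\tfrac12)\Gamma(M-\nu+\tfrac12)$. An application of the reflection formula in the form $\cos(\pi\nu)=\pi/\bigl(\Gamma(\tfrac12+\nu)\Gamma(\tfrac12-\nu)\bigr)$ then converts the two gamma functions into the Pochhammer symbols $(\nu+\tfrac12)_M$ and $(\tfrac12-\nu)_M$, reproducing the closed form of $a_M(\nu)$ above.

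For \eqref{eq51} I would first use the contiguous relation $-2K'_\nu(t)=K_{\nu-1}(t)+K_{\nu+1}(t)$ \cite[eq. 10.29.1]{NIST} to write the integral in \eqref{eq51} as $-\tfrac12$ times the sum of two integrals of the type just evaluated, with $\nu$ replaced by $\nu-1$ and $\nu+1$. Each is given by the master formula at $s=M+\tfrac12$, and the near-origin behaviour of $K_{\nu\pm1}$ forces the joint convergence condition $|\Re(\nu)|<M-\tfrac12$ (with $M\ge 1$), exactly as assumed. Writing the two products $\Gamma(M+\nu-\tfrac12)\Gamma(M-\nu+\tfrac32)$ and $\Gamma(M+\nu+\tfrac32)\Gamma(M-\nu-\tfrac12)$ over the common factor $\Gamma(M+\nu-\tfrac12)\Gamma(M-\nu-\tfrac12)$, their sum simplifies to $\bigl(2M^2+2\nu^2-\tfrac12\bigr)\Gamma(M+\nu-\tfrac12)\Gamma(M-\nu-\tfrac12)$; the reflection formula again turns the gamma quotient into $(\nu+\tfrac12)_{M-1}(\tfrac12-\nu)_{M-1}$, and collecting constants yields \eqref{eq51}. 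The one step demanding care — and the main obstacle — is this last gamma-function bookkeeping: keeping the half-integer shifts, the overall sign introduced by $-\tfrac12$, and the reflection-formula normalisation consistent, so that the result matches the polynomial $b_M(\nu)$ of degree $M$ determined by \eqref{eq61}.
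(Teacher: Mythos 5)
Your proof is correct, but it follows a genuinely different route from the paper's. The paper does not actually prove this lemma: it is stated in Section \ref{section4} with the remark that the proof ``can be found in \cite{Nemes2}''. In that source the identities \eqref{eq50} and \eqref{eq51} are not obtained from Laplace-transform tables but fall out of the Stieltjes-transform (resurgence) representations of the remainders $R_M^{(K)}$ and $R_M^{(K')}$ --- i.e.\ precisely \eqref{eq52} and \eqref{eq53} here: since \eqref{eq54} gives $a_M(\nu)/z^M = R_M^{(K)}(z,\nu)-R_{M+1}^{(K)}(z,\nu)$, substituting \eqref{eq52} and using $\frac{1}{1+t/z}+\frac{t/z}{1+t/z}=1$ yields \eqref{eq50} in one line; this is exactly the differencing trick the present paper uses to get \eqref{eq30} from \eqref{eq6}. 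That route needs no closed form for the coefficients, but it rests on the deeper resurgence formula \eqref{eq52}. Your route instead evaluates $\int_0^{+\infty}e^{-t}t^{s-1}K_\nu(t)\,dt$ directly (your ``master formula'' is the $\alpha=\beta$ case of the classical Laplace transform of $K_\nu$, and your derivation via $1+\cosh x=2\cosh^2(x/2)$, the beta integral and the duplication formula is sound, with the correct convergence strip $\Re(s)>|\Re(\nu)|$), and then matches the result against the explicit products $a_M(\nu)=\frac{(-1)^M}{2^MM!}\left(\nu+\tfrac12\right)_M\left(\tfrac12-\nu\right)_M$ and the corresponding DLMF form of $b_M(\nu)$. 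I checked the bookkeeping you flagged as delicate: the sum of the two gamma products does equal $\bigl(2M^2+2\nu^2-\tfrac12\bigr)\Gamma\left(M+\nu-\tfrac12\right)\Gamma\left(M-\nu-\tfrac12\right)$, and after the reflection formula the constants collect to
\begin{equation*}
(-1)^{M-1}\,\frac{\left(4M^2+4\nu^2-1\right)}{2^{M+2}M!}\left(\nu+\tfrac12\right)_{M-1}\left(\tfrac12-\nu\right)_{M-1},
\end{equation*}
which is exactly $b_M(\nu)$ as given in \cite[\S 10.40(i)]{NIST} (equivalently, $b_M=a_M+\left(M-\tfrac12\right)a_{M-1}$), so \eqref{eq51} does come out with the right sign and normalisation. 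The trade-off: your argument is elementary and self-contained but requires identifying the coefficients' closed forms, whereas the paper's (cited) argument works with the coefficients abstractly as defined by \eqref{eq54} and \eqref{eq61} and simultaneously produces the remainder representations \eqref{eq52}--\eqref{eq53} that Section \ref{section4} needs anyway. One cosmetic caveat: your small-$t$ estimate $K_\nu(t)=\mathcal{O}(t^{-|\Re(\nu)|})$ fails for $\nu=0$ (where $K_0(t)=\mathcal{O}(\log(1/t))$), but the convergence conclusion is unaffected.
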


\begin{lemma} Let $M$ be a non-negative integer and let $\nu$ be an arbitrary complex number. Then
\begin{equation}\label{eq52}
R_M^{( K )} ( z,\nu ) = (-1)^M \left( \frac{2}{\pi } \right)^{\frac{1}{2}} \frac{\cos (\pi \nu )}{\pi }\frac{1}{z^M }\int_0^{ + \infty } \frac{t^{M - \frac{1}{2}} e^{ - t} }{1 + t/z}K_\nu  ( t )dt ,
\end{equation}
provided that $\left| {\arg z} \right| < \pi$ and $|\Re(\nu)|<M+\frac{1}{2}$, and
\begin{equation}\label{eq53}
R_M^{(K')} ( z,\nu ) = (- 1)^M \left( \frac{2}{\pi} \right)^{\frac{1}{2}} \frac{\cos ( \pi \nu)}{\pi }\frac{1}{z^M }\int_0^{ + \infty } \frac{t^{M - \frac{1}{2}} e^{ - t} }{1 + t/z}K'_\nu (t)dt ,
\end{equation}
provided that $\left| {\arg z} \right| < \pi$, $|\Re(\nu)|<M-\frac{1}{2}$ and $M \geq 1$.
\end{lemma}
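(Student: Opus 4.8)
The plan is to prove \eqref{eq52} from a Stieltjes-type master representation and then to deduce \eqref{eq53} from it. First I would establish the $M=0$ instance of \eqref{eq52}, namely
\[
\sqrt{\tfrac{2z}{\pi}}\,e^{z}K_\nu(z) = \left(\tfrac{2}{\pi}\right)^{1/2}\frac{\cos(\pi\nu)}{\pi}\int_0^{+\infty}\frac{t^{-1/2}e^{-t}K_\nu(t)}{1+t/z}\,dt,
\]
valid for $|\arg z|<\pi$ and $|\Re(\nu)|<\tfrac12$. The function $f(z)=\sqrt{2z/\pi}\,e^{z}K_\nu(z)$ is analytic in the cut plane $|\arg z|<\pi$, satisfies $f(z)\to1$ with $f(z)-1=\mathcal{O}(z^{-1})$ as $z\to\infty$, and behaves like $z^{1/2-|\Re(\nu)|}\to0$ at the origin when $|\Re(\nu)|<\tfrac12$. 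I would apply Cauchy's theorem on a Hankel-type contour wrapping the cut $(-\infty,0]$; because of the decay at both ends, $f(z)-1$ is recovered entirely from the jump of $f$ across the cut. Using the connection formulae $K_\nu(te^{\pm\pi i})=e^{\mp\nu\pi i}K_\nu(t)\mp i\pi I_\nu(t)$ together with $(te^{\pm\pi i})^{1/2}=\pm i\,t^{1/2}$, this jump simplifies — crucially, the two $I_\nu$ contributions cancel — to $2i(2/\pi)^{1/2}\cos(\pi\nu)\,t^{1/2}e^{-t}K_\nu(t)$, and substituting it into the Cauchy representation (the overall sign being fixed by the orientation) yields the displayed formula.

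For general $M$ I would insert the finite geometric sum $\frac{1}{1+t/z}=\sum_{m=0}^{M-1}(-t/z)^m+\frac{(-t/z)^M}{1+t/z}$ into the base representation. The coefficient representation \eqref{eq50} identifies each resulting term $\left(\tfrac{2}{\pi}\right)^{1/2}\frac{\cos(\pi\nu)}{\pi}(-1/z)^m\int_0^{+\infty}t^{m-1/2}e^{-t}K_\nu(t)\,dt$ with $a_m(\nu)/z^m$, so comparison with the defining relation \eqref{eq54} leaves precisely \eqref{eq52}, initially under the hypothesis $|\Re(\nu)|<\tfrac12$ inherited from the base case. Since both sides are analytic in $\nu$ on the strip $|\Re(\nu)|<M+\tfrac12$ (the integral converges there, and $R_M^{(K)}(z,\nu)$ is entire in $\nu$), analytic continuation extends the identity to the full stated range $|\Re(\nu)|<M+\tfrac12$.

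For \eqref{eq53} I would avoid a separate contour computation and reduce to the already proved \eqref{eq52}. Writing $-2K'_\nu(t)=K_{\nu-1}(t)+K_{\nu+1}(t)$ inside the integral and applying \eqref{eq52} at the orders $\nu\pm1$ — noting $\cos(\pi(\nu\pm1))=-\cos(\pi\nu)$ — I obtain the right-hand side of \eqref{eq53} as $\tfrac12\big(R_M^{(K)}(z,\nu-1)+R_M^{(K)}(z,\nu+1)\big)$. On the other hand, feeding $-2K'_\nu=K_{\nu-1}+K_{\nu+1}$ into the definitions \eqref{eq54} and \eqref{eq61} and matching like powers of $z$ gives both the polynomial identity $2b_m(\nu)=a_m(\nu-1)+a_m(\nu+1)$ and the remainder relation $2R_M^{(K')}(z,\nu)=R_M^{(K)}(z,\nu-1)+R_M^{(K)}(z,\nu+1)$; the latter equals the quantity just computed, establishing \eqref{eq53}. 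The binding convergence requirement then becomes $|\Re(\nu\pm1)|<M+\tfrac12$, that is $|\Re(\nu)|<M-\tfrac12$ with $M\ge1$, as stated. (Alternatively, one could repeat the discontinuity argument directly for $-\sqrt{2z/\pi}\,e^{z}K'_\nu(z)$ and invoke \eqref{eq51}.)

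The main obstacle is the base case of the first step. It is tempting to argue purely recursively: the candidate right-hand side of \eqref{eq52} satisfies the same recurrence $R_M^{(K)}(z,\nu)-R_{M+1}^{(K)}(z,\nu)=a_M(\nu)/z^M$ (by \eqref{eq50}) as the true remainder, so their difference is independent of $M$. However, this difference, although $\mathcal{O}(z^{-K})$ for every $K$ as $z\to\infty$, need not vanish: it could be an exponentially small multiple of the recessive solution $I_\nu$, a Stokes-type ambiguity invisible to any power-of-$1/z$ argument. What rules this out is exactly the honest evaluation of the jump across the cut, where the $I_\nu$ terms are seen to cancel and the constant is pinned down; this is why the contour computation, and not the recurrence alone, is the crux.
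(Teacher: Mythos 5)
Your proof is correct, but note that the paper you are being compared against does not actually prove this lemma at all: it is stated with the remark that its proof ``can be found in \cite{Nemes2}''. So the only meaningful comparison is with the derivation in that reference which, following Boyd \cite{Boyd}, rests on exactly the ingredients you use: the slowly varying factor $\sqrt{2z/\pi}\,e^{z}K_\nu(z)$ is analytic in the cut plane, tends to $1$ at infinity and to $0$ at the origin when $|\Re(\nu)|<\tfrac12$, so it is reconstructed from its discontinuity across $(-\infty,0]$, which the continuation formulae $K_\nu(te^{\pm\pi i})=e^{\mp\nu\pi i}K_\nu(t)\mp i\pi I_\nu(t)$ reduce (after cancellation of the $I_\nu$ terms) to $2i(2/\pi)^{1/2}\cos(\pi\nu)t^{1/2}e^{-t}K_\nu(t)$; the finite geometric expansion of the Stieltjes kernel together with the moment formula \eqref{eq50} then yields \eqref{eq52} for $|\Re(\nu)|<\tfrac12$, and analytic continuation in $\nu$ extends it to the full strip $|\Re(\nu)|<M+\tfrac12$. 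Your reduction of \eqref{eq53} to \eqref{eq52} via $-2K'_\nu=K_{\nu-1}+K_{\nu+1}$, the parity $\cos(\pi(\nu\pm1))=-\cos(\pi\nu)$, and the exact relation $2R_M^{(K')}(z,\nu)=R_M^{(K)}(z,\nu-1)+R_M^{(K)}(z,\nu+1)$ (which follows from \eqref{eq54}, \eqref{eq61} and uniqueness of asymptotic power series, the same argument giving $2b_m(\nu)=a_m(\nu-1)+a_m(\nu+1)$) is sound and produces precisely the stated constraints $|\Re(\nu)|<M-\tfrac12$, $M\ge1$; this differs from, but is equivalent to, invoking \eqref{eq51} and \eqref{eq53} directly through a second discontinuity computation for $K'_\nu$.

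One presentational point you should make explicit: the Cauchy formula with kernel $1/(w-z)$ recovers $f(z)-1$, giving a representation with density $t^{1/2}e^{-t}K_\nu(t)$ and prefactor $z^{-1}$, not your displayed $M=0$ formula. To land on the displayed form either use the kernel $z/(w(w-z))$ (legitimate here because $f(0)=0$, and it also kills the large-circle contribution), or add back $1=(2/\pi)^{1/2}\pi^{-1}\cos(\pi\nu)\int_0^{+\infty}t^{-1/2}e^{-t}K_\nu(t)\,dt$, i.e.\ \eqref{eq50} with $M=0$, and use $1-\tfrac{t/z}{1+t/z}=\tfrac{1}{1+t/z}$. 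This is a two-line reconciliation, not a gap. Your closing observation --- that the recurrence plus $\mathcal{O}(z^{-K})$ estimates alone cannot identify the representation because of an ambiguity beyond all orders --- is correct, and is indeed the reason the discontinuity computation is the crux of any honest proof.
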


We continue with the proof of the estimate \eqref{eq55}. Let $N$ and $M$ be arbitrary non-negative integers. Suppose that $\Re(\mu)<2N-M+\frac{1}{2}$, $|\Re(\nu)|<M+\frac{1}{2}$ and $| \arg z| < \frac{\pi}{2}$. We begin by replacing the function $K_\nu (t)$ in the integral formula \eqref{eq3} for $R_{N}^{(S)} (z,\mu ,\nu )$ by its truncated asymptotic expansion \eqref{eq54} and using the representation \eqref{eq01} of the basic terminant $\Pi_p(w)$. In this way, we obtain \eqref{eq23} with
\begin{gather}\label{eq57}
\begin{split}
R_{N,M}^{(S)} (z,\mu ,\nu ) & = \int_0^{ + \infty } \frac{t^{2N - \mu  - \frac{1}{2}} e^{ - t} }{1 + (t/z)^2 }R_M^{(K)} (t,\nu )dt \\  & = |z|^{2N - \mu  + \frac{1}{2}} \int_0^{ + \infty } \frac{u^{2N - \mu  - \frac{1}{2}} e^{ - |z|u} }{1 + u^2 e^{ - 2i\arg z} }R_M^{(K)} (|z|u,\nu )du .
\end{split}
\end{gather}
When passing to the second equality, we have made a change of integration variable from $t$ to $u$ by $t = |z|u$. The remainder $R_M^{(K)} (|z|u,\nu )$ is given by the integral formula \eqref{eq52}, which can be re-expressed in the form
\begin{gather}\label{eq58}
\begin{split}
R_M^{(K)} (|z|u ,\nu) = \; & (- 1)^M \left( \frac{2}{\pi} \right)^{\frac{1}{2}} \frac{\cos (\pi \nu )}{\pi}\frac{1}{(|z|u)^M }\int_0^{ + \infty } \frac{t^{M - \frac{1}{2}} e^{ - t} }{1 + t/\left|z\right|}K_\nu (t)dt 
\\ & + ( - 1)^M \left( \frac{2}{\pi } \right)^{\frac{1}{2}} \frac{\cos ( \pi \nu)}{\pi }\frac{u  - 1}{( |z|u )^M}\int_0^{ + \infty } \frac{t^{M - \frac{1}{2}} e^{ - t} }{\left( 1 + |z|u /t \right)\left( 1 + t/|z| \right)}K_\nu ( t )dt 
\\ = \; & u^{ - M} R_M^{( K )} (|z|,\nu ) \\ & + (  - 1)^M \left( \frac{2}{\pi } \right)^{\frac{1}{2}} \frac{\cos ( \pi \nu)}{\pi }\frac{u  - 1}{(|z|u)^M }\int_0^{ + \infty } \frac{t^{M - \frac{1}{2}} e^{ - t} }{\left( 1 + |z|u /t \right)\left( 1 +t/|z| \right)}K_\nu(t)dt .
\end{split}
\end{gather}
Noting that
\[
0 < \frac{1}{\left( 1 + |z|u/t \right)\left( 1 + t/|z| \right)} < 1
\]
for positive $u$ and $t$, the substitution of \eqref{eq58} into \eqref{eq57} and trivial estimation yield the upper bound
\begin{align*}
& \big| R_{N,M}^{(S)} (z,\mu ,\nu ) \big| \le  \big| |z|^{2N - \mu  + \frac{1}{2}} \big|\left| \int_0^{ + \infty } \frac{u^{2N - M - \mu  - \frac{1}{2}} e^{ - \left| z \right|u} }{1 + u^2 e^{ - 2i\arg z} }du \right|\big| R_M^{(K)} (\left| z \right|,\nu ) \big|
\\ & + \frac{\left| \cos (\pi \nu ) \right|}{\left| \cos (\pi \Re (\nu )) \right|}\left| a_M (\Re (\nu )) \right|\big| |z|^{2N - M - \mu  + \frac{1}{2}}  \big|\int_0^{ + \infty } \big| u^{2N - M - \mu  - \frac{1}{2}} \big|e^{ - \left| z \right|u} \left| \frac{u - 1}{u^2 +  e^{ 2i\arg z} } \right|du .
\end{align*}
In arriving at this bound, we have made use of the fact that $|K_\nu(t)|<K_{\Re(\nu)}(t)$ for any $t > 0$\footnote{This inequality follows, for example, from the integral representation $K_\nu(t) = \frac{1}{2}\int_{ - \infty }^{ + \infty } e^{ - t\cosh u - \nu u} du$ (see, e.g., \cite[eq. (7), p. 182]{Watson}).} and of the representation \eqref{eq50} for the coefficients $a_N (\nu)$ (with $\Re(\nu)$ in place of $\nu$). Since $| ( u - 1)/( u^2 + e^{2i\arg z} )| \le 1$ for positive $u$, we find, after simplification, that
\begin{align*}
\big| R_{N,M}^{(S)} (z,\mu ,\nu ) \big| \le \; & \left| z \right|^M \big| R_M^{(K)} (\left| z \right|,\nu ) \big|\left| \Gamma \left( 2N - M - \mu  + \frac{1}{2} \right) \right|\big| \Pi _{2N - M - \mu  + \frac{1}{2}} (z) \big|
\\ & + \frac{\left| \cos (\pi \nu ) \right|}{\left| \cos (\pi \Re (\nu )) \right|}\left| a_M (\Re (\nu )) \right|\Gamma \left( 2N - M - \Re (\mu ) + \frac{1}{2} \right).
\end{align*}
By continuity, this bound holds in the closed sector $\left|\arg z\right| \leq \frac{\pi}{2}$, and therefore the proof of the estimate \eqref{eq55} is complete.

The corresponding bound \eqref{eq59} for the remainder term $R_{N,M}^{(S')} (z,\mu ,\nu )$ can be proved in an analogous manner using the representations \eqref{eq21}, \eqref{eq61}, \eqref{eq51} and \eqref{eq53}. We leave the details to the reader.

We now turn to the proof of the expression \eqref{eq62}. Let $N$ and $M$ be arbitrary non-negative integers. Suppose that $z$ is positive and $\mu$ and $\nu$ are real numbers such that $\mu<2N-M+\frac{1}{2}$ and $|\nu|<M+\frac{1}{2}$. It is known that in the case when $t$ is positive and $\nu$ is real, $|\nu|<M + \frac{1}{2}$, we have
\begin{equation}\label{eq64}
R_M^{(K)} (t,\nu ) = \frac{a_M (\nu )}{t^M}\theta _M^{(K)} (t,\nu ).
\end{equation}
Here $0<\theta _M^{(K)} (t,\nu )<1$ is an appropriate number that depends on $t$, $\nu$ and $M$ (see, for instance, \cite{Nemes2}). Applying this formula in the representation \eqref{eq57} for $R_{N,M}^{(S)} (z,\mu ,\nu )$, and then using the mean value theorem of integration, we find that
\begin{align*}
R_{N,M}^{(S)} (z,\mu ,\nu ) & = a_M (\nu )\int_0^{ + \infty } \frac{t^{2N - M - \mu  - \frac{1}{2}} e^{ - t} }{1 + (t/z)^2}\theta _M^{(K)} (t,\nu )dt 
\\ & = a_M (\nu )\Theta _{N,M}^{(K)} (z,\mu ,\nu )\int_0^{ + \infty } \frac{t^{2N - M - \mu  - \frac{1}{2}} e^{ - t} }{1 + (t/z)^2}dt .
\end{align*}
Here, $0<\Theta _{N,M}^{(K)} (z,\mu ,\nu )<1$ is a suitable number that depends on $z$, $\mu$, $\nu$, $N$ and $M$. The integral in the second line can be expressed in terms of the basic terminant $\Pi_p(w)$ by making use of the formula \eqref{eq01}. Hence the expression \eqref{eq62} follows.

The corresponding result \eqref{eq63} for $R_{N,M}^{(S')} (z,\mu ,\nu )$ can be obtained using the analogues of the representations \eqref{eq57} and \eqref{eq64} (for the latter, see \cite{Nemes2}).

\section{Application to related functions}\label{section5}

In this section, we show how the remainder terms of the known asymptotic expansions of the Anger--Weber-type functions, the Scorer functions, the Struve functions and their derivatives may be expressed in terms of the remainders $R_{N}^{(S)} (z,\mu ,\nu )$ and $R_{N}^{(S')} (z,\mu ,\nu )$. Thus, the theorems in Section \ref{section1} can be applied to obtain representations and bounds for the error terms of these asymptotic expansions. Some of the resulting representations and bounds are well known in the literature but many of them, we believe, are new.

The asymptotic expansions of the Anger function ${\bf J}_\nu  (z)$, the Weber function ${\bf E}_\nu  (z)$ and the associated Anger--Weber function ${\bf A}_\nu  (z)$ may be written
\begin{equation}\label{eq67}
{\bf J}_\nu  (z) = J_\nu  (z) + \frac{\sin (\pi \nu )}{\pi z}\left( \left( \sum\limits_{n = 0}^{N - 1} \frac{F_n (\nu )}{z^{2n} }  + R_N^{(1)} (z,\nu ) \right) - \frac{\nu }{z}\left( \sum\limits_{m = 0}^{M - 1} \frac{G_m (\nu )}{z^{2m}}  + R_M^{(2)} (z,\nu ) \right) \right),
\end{equation}
\begin{gather}
\begin{split}
{\bf E}_\nu  (z) =  - Y_\nu  (z) & - \frac{1 + \cos (\pi \nu )}{\pi z}\left( \sum\limits_{n = 0}^{N - 1} \frac{F_n (\nu )}{z^{2n} }  + R_N^{(1)} (z,\nu ) \right) \\ & - \frac{\nu (1 - \cos (\pi \nu ))}{\pi z^2 }\left( \sum\limits_{m = 0}^{M - 1} \frac{G_m (\nu )}{z^{2m} }  + R_M^{(2)} (z,\nu ) \right)
\end{split}
\end{gather}
and
\begin{equation}\label{eq68}
{\bf A}_\nu  (z) = \frac{1}{\pi z}\left( \sum\limits_{n = 0}^{N - 1} \frac{F_n (\nu )}{z^{2n} }  + R_N^{(1)} (z,\nu ) \right) - \frac{\nu }{\pi z^2}\left( \sum\limits_{m = 0}^{M - 1} \frac{G_m (\nu )}{z^{2m}}  + R_M^{(2)} (z,\nu ) \right).
\end{equation}
Here $N$ and $M$ are arbitrary non-negative integers and $R_N^{(1)} (z,\nu ) = \mathcal{O}_{\nu ,N} (|z|^{ - 2N} )$, $R_M^{(2)} (z,\nu ) = \mathcal{O}_{\nu ,M} (|z|^{ - 2M} )$ as $z\to \infty$ in the sector $| \arg z|\leq \pi-\delta$ with $\delta>0$ being fixed (cf. \cite[\S11.11(i)]{NIST}). The coefficients $F_n (\nu )$ and $G_n (\nu )$ are polynomials in $\nu^2$ of degree $n$ and they are given explicitly by $F_0(\nu)=G_0(\nu)=1$ and
\[
F_n (\nu ) = \prod\limits_{k = 1}^n (\nu ^2  - (2k - 1)^2 )  = ( - 1)^n a_n (0,\nu ),\;\, G_n (\nu )  = \prod\limits_{k = 1}^n (\nu ^2  - (2k)^2 )  = ( - 1)^n a_n (1,\nu ),
\]
for $n\geq1$. The asymptotic expansions of the functions ${\bf J}_\nu  (z)$ and ${\bf E}_\nu  (z)$ were stated without proof in 1879 by Weber \cite{Weber} and in the subsequent year by Lommel \cite{Lommel}. They were proved as special cases of much more general formulae by Nielsen \cite[p. 228]{Nielsen} in 1904 (see also \cite[\S 10.14]{Watson}). The Anger and Weber functions are related to the Lommel function through the functional equations \cite[eqs. (9) and (10), p. 84]{Luke}
\begin{equation}\label{eq65}
{\bf J}_\nu  (z) = J_\nu  (z) + \frac{\sin (\pi \nu )}{\pi }(S_{0,\nu } (z) - \nu S_{ - 1,\nu } (z))
\end{equation}
and
\begin{equation}
{\bf E}_\nu  (z) =  - Y_\nu  (z) - \frac{1 + \cos (\pi \nu )}{\pi }S_{0,\nu } (z) - \frac{\nu (1 - \cos (\pi \nu ))}{\pi }S_{ - 1,\nu } (z).
\end{equation}
Also, by combining \eqref{eq65} with the connection formula ${\bf J}_\nu  (z) = J_\nu  (z) + \sin (\pi \nu ){\bf A}_\nu  (z)$ \cite[eq. 11.10.15]{NIST}, we find
\begin{equation}\label{eq75}
{\bf A}_\nu  (z) = \frac{S_{0,\nu } (z) - \nu S_{ - 1,\nu } (z)}{\pi } .
\end{equation}
If we substitute \eqref{eq22} into the right-hand sides of these equalities and compare the results with \eqref{eq67}--\eqref{eq68}, we can infer that
\begin{equation}\label{eq66}
R_N^{(1)} (z,\nu ) = R_N^{(S)} (z,0,\nu )\; \text{ and } \; R_M^{(2)} (z,\nu ) = R_M^{(S)} (z, - 1,\nu ).
\end{equation}
The known estimates for $R_N^{(1)} (z,\nu )$ and $R_M^{(2)} (z,\nu )$ by Meijer \cite{Meijer} can all be deduced as direct consequences of \eqref{eq66}, \eqref{eq43} and the bounds given in Appendix \ref{appendixa}.

By taking the derivative of both sides of the equalities \eqref{eq67}--\eqref{eq68} with respect to $z$, it is seen that the functions ${\bf J}'_\nu  (z)$, ${\bf E}'_\nu  (z)$ and ${\bf A}'_\nu  (z)$ admit expansions of the form
\begin{gather}\label{eq69}
\begin{split}
{\bf J}'_\nu  (z) = J'_\nu  (z) & - \frac{\sin (\pi \nu )}{\pi z^2 }\left( \left( \sum\limits_{n = 0}^{N - 1} \frac{(2n + 1)F_n (\nu )}{z^{2n} }  + \widetilde R_N^{(1)} (z,\nu ) \right) \right. \\ & \left.- \frac{\nu }{z}\left( \sum\limits_{m = 0}^{M - 1} \frac{(2m + 2)G_m (\nu )}{z^{2m} }  + \widetilde R_M^{(2)} (z,\nu )\right) \right),
\end{split}
\end{gather}
\begin{gather}
\begin{split}
{\bf E}'_\nu  (z) = & - Y'_\nu  (z) + \frac{1 + \cos (\pi \nu )}{\pi z^2 }\left( \sum\limits_{n = 0}^{N - 1} \frac{(2n + 1)F_n (\nu )}{z^{2n} }  + \widetilde R_N^{(1)} (z,\nu ) \right)
\\ & + \frac{\nu (1 - \cos (\pi \nu ))}{\pi z^3}\left( \sum\limits_{m = 0}^{M - 1} \frac{(2m + 2)G_m (\nu )}{z^{2m} }  + \widetilde R_M^{(2)} (z,\nu ) \right)
\end{split}
\end{gather}
and
\begin{equation}\label{eq70}
{\bf A}'_\nu  (z) =  - \frac{1}{\pi z^2}\left( \sum\limits_{n = 0}^{N - 1} \frac{(2n + 1)F_n (\nu )}{z^{2n} }  + \widetilde R_N^{(1)} (z,\nu ) \right) + \frac{\nu }{\pi z^3}\left( \sum\limits_{m = 0}^{M - 1} \frac{(2m + 2)G_m (\nu )}{z^{2m}}  + \widetilde R_M^{(2)} (z,\nu ) \right) .
\end{equation}
If we differentiate each side of the equalities \eqref{eq65}--\eqref{eq75} with respect to $z$, substitute \eqref{eq76} into the right-hand sides of the resulting equalities and compare them with \eqref{eq69}--\eqref{eq70}, we find that
\[
\widetilde R_N^{(1)} (z,\nu ) = -R_N^{(S')} (z,0,\nu )\; \text{ and } \; \widetilde R_M^{(2)} (z,\nu ) = -R_M^{(S')} (z, - 1,\nu ).
\]
In particular, $\widetilde R_N^{(1)} (z,\nu ) = \mathcal{O}_{\nu ,N} (|z|^{ - 2N} )$, $\widetilde R_M^{(2)} (z,\nu ) = \mathcal{O}_{\nu ,M} (|z|^{ - 2M} )$ as $z\to \infty$ in the sector $| \arg z|\leq \pi-\delta$ with any fixed $\delta>0$. Consequently, the expansions \eqref{eq69}--\eqref{eq70} are the large-$z$ asymptotic expansions of the derivatives of the functions ${\bf J}_\nu (z)$, ${\bf E}_\nu (z)$ and ${\bf A}_\nu (z)$.

The first Scorer function $\operatorname{Hi}(z)$ and its derivative have the asymptotic expansions
\begin{equation}\label{eq71}
\operatorname{Hi}(-z) = \frac{1}{\pi z}\left( \sum\limits_{n = 0}^{N - 1} ( - 1)^n \frac{(3n)!}{n!(3z^3 )^n }  + R_N^{(\operatorname{Hi})} (z) \right)
\end{equation}
and
\begin{equation}\label{eq72}
\operatorname{Hi}'( - z) = \frac{1}{\pi z^2}\left( \sum\limits_{n = 0}^{N - 1} ( - 1)^n \frac{(3n + 1)!}{n!(3z^3 )^n}  + R_N^{(\operatorname{Hi}')} (z) \right),
\end{equation}
for any $N \geq 0$, where $R_N^{(\operatorname{Hi})}(z), R_N^{(\operatorname{Hi}')}(z) = \mathcal{O}_N (|z|^{ - 3N} )$ as $z\to \infty$ in the sector $|\arg z|\leq \frac{2\pi}{3}-\delta$, with any fixed $\delta>0$ (see, for instance, \cite[\S9.12(viii)]{NIST}). The asymptotic expansion \eqref{eq71} was established in 1950 by Scorer \cite{Scorer} for large positive values of $z$. Extension to complex $z$ and the corresponding expansion \eqref{eq72} for the derivative is due to Luke \cite[p. 138]{Luke} from 1962. It is known that the Scorer function $\operatorname{Hi}(z)$ can be expressed in terms of the Lommel function as follows \cite[eq. (6), p. 134]{Luke}:
\[
\operatorname{Hi} ( - z) = \frac{2}{3\pi}z^{\frac{1}{2}}S_{0,\frac{1}{3}} (\tfrac{2}{3}z^{\frac{3}{2}}) .
\]
Differentiating both sides of this equality and employing the functional equation $(\nu /z)S_{\mu ,\nu } (z) + S'_{\mu ,\nu } (z)= (\mu  + \nu  - 1)S_{\mu  - 1,\nu  - 1} (z) = (\mu  + \nu  - 1)S_{\mu  - 1,1 - \nu } (z)$ for the Lommel function, we also have
\[
\operatorname{Hi}'( - z) =  - \frac{1}{3\pi }z^{ - \frac{1}{2}} S_{0,\frac{1}{3}} (\tfrac{2}{3}z^{\frac{3}{2}} ) - \frac{2}{3\pi}zS'_{0,\frac{1}{3}} (\tfrac{2}{3}z^{\frac{3}{2}} ) = \frac{4}{9\pi}zS_{ - 1,\frac{2}{3}} (\tfrac{2}{3}z^{\frac{3}{2}} ) .
\]
Substituting \eqref{eq22} into the right-hand sides of these equalities and comparing the results with \eqref{eq71} and \eqref{eq72}, we deduce that
\begin{equation}\label{eq77}
R_N^{(\operatorname{Hi})} (z) =  R_N^{(S)} ( \tfrac{2}{3}z^{\frac{3}{2}} ,0,\tfrac{1}{3} )  \; \text{ and } \;  R_N^{(\operatorname{Hi}')} (z) = R_N^{(S)} (\tfrac{2}{3}z^{\frac{3}{2}} , - 1,\tfrac{2}{3}) .
\end{equation}

The second Scorer function $\operatorname{Gi}(z)$ and its derivative have the asymptotic expansions
\begin{equation}\label{eq73}
\operatorname{Gi}(z) = \frac{1}{\pi z}\left( \sum\limits_{n = 0}^{N - 1} \frac{(3n)!}{n!(3z^3 )^n }  + R_N^{(\operatorname{Gi})} (z) \right)
\end{equation}
and
\begin{equation}\label{eq74}
\operatorname{Gi}'(z) =  - \frac{1}{{\pi z^2 }}\left( \sum\limits_{n = 0}^{N - 1} \frac{(3n + 1)!}{n!(3z^3 )^n}  + R_N^{(\operatorname{Gi}')} (z) \right),
\end{equation}
where $N$ is any non-negative integer and $R_N^{(\operatorname{Gi})} (z), R_N^{(\operatorname{Gi}')} (z) = \mathcal{O}_N (|z|^{ - 3N} )$ as $z\to \infty$ in the sector $|\arg z|\leq \frac{\pi}{3}-\delta$, with $\delta>0$ being fixed (see, e.g., \cite[\S9.12(viii)]{NIST}). For large positive values of $z$, the asymptotic expansion \eqref{eq73} was derived in 1950 by Scorer \cite{Scorer}. Extension to complex $z$ and the corresponding expansion \eqref{eq74} for the derivative is due to Luke \cite[p. 138]{Luke} from 1962. The second Scorer function is expressible in terms of the first function \cite[eq. 9.12.12]{NIST}:
\[
\operatorname{Gi}(z) = \frac{1}{2}\left( e^{ - \frac{\pi}{3}i} \operatorname{Hi}(ze^{\frac{2\pi}{3}i} ) + e^{\frac{\pi}{3}i} \operatorname{Hi}(ze^{ - \frac{2\pi}{3}i} ) \right).
\]
By differentiating both sides of this equality, we also have
\[
\operatorname{Gi}'(z) = \frac{1}{2}\left( e^{\frac{\pi}{3}i} \operatorname{Hi}'(ze^{\frac{2\pi}{3}i} ) + e^{ - \frac{\pi }{3}i} \operatorname{Hi}'(ze^{ - \frac{2\pi}{3}i} ) \right).
\]
If we now substitute the expansions \eqref{eq71} and \eqref{eq72} into these relations and use \eqref{eq77}, we obtain that
\[
R_N^{(\operatorname{Gi})} (z) = \frac{1}{2}\left( e^{ - \frac{\pi}{3}i} R_N^{(S)} ( \tfrac{2}{3}z^{\frac{3}{2}} e^{ - \frac{\pi}{2}i} ,0,\tfrac{1}{3}) + e^{\frac{\pi}{3}i} R_N^{(S)} ( \tfrac{2}{3}z^{\frac{3}{2}} e^{\frac{\pi}{2}i} ,0,\tfrac{1}{3}) \right)
\]
and
\[
R_N^{(\operatorname{Gi}')} (z) = \frac{1}{2}\left( {e^{\frac{\pi}{3}i} R_N^{(S)} (\tfrac{2}{3}z^{\frac{3}{2}} e^{ - \frac{\pi}{2}i} , - 1,\tfrac{2}{3}) + e^{ - \frac{\pi}{3}i} R_N^{(S)} (\tfrac{2}{3}z^{\frac{3}{2}} e^{\frac{\pi}{2}i} , - 1,\tfrac{2}{3})} \right),
\]
respectively.

The asymptotic expansions of the Struve function ${\bf H}_\nu  (z)$ and the modified Struve function ${\bf L}_\nu  (z)$ can be written
\begin{equation}\label{eq81}
{\bf H}_\nu  (z) =  Y_\nu  (z) + \frac{1}{\pi }\left( \frac{1}{2}z \right)^{\nu  - 1} \left( \sum\limits_{n = 0}^{N - 1} \frac{\Gamma \left( n + \frac{1}{2} \right)}{\Gamma \left( \nu  + \frac{1}{2} - n \right)\left( \frac{1}{2}z \right)^{2n} }  + R_N^{({\bf H})} (z,\nu ) \right)
\end{equation}
and
\begin{equation}\label{eq80}
{\bf L}_\nu  (z) = I_\nu  (z) \pm \frac{2}{\pi i}e^{ \pm \pi i\nu } K_\nu  (z) + \frac{1}{\pi}\left( \frac{1}{2}z \right)^{\nu  - 1} \left( \sum\limits_{n = 0}^{N - 1} \frac{( - 1)^{n + 1} \Gamma \left( n + \frac{1}{2} \right)}{\Gamma \left( \nu  + \frac{1}{2} - n \right)\left( \frac{1}{2}z \right)^{2n} }  + R_{N, \pm }^{({\bf L})} (z,\nu ) \right),
\end{equation}
for any $N\geq 0$, where $R_N^{({\bf H})} (z,\nu ) = \mathcal{O}_{\nu ,N} (|z|^{ - 2N} )$ and $R_{N, \pm }^{({\bf L})} (z,\nu ) = \mathcal{O}_{\nu ,N} (|z|^{ - 2N} )$ as $z\to \infty$ in the sectors $|\arg z|\leq \pi-\delta$ and $-\frac{\pi}{2} +\delta \leq \pm \arg z\leq \frac{3\pi}{2} -\delta$, with any fixed $\delta>0$ \cite[\S11.6(i)]{NIST}. The asymptotic expansion of ${\bf H}_\nu  (z)$ was given in 1887 by Rayleigh \cite{Rayleigh} for the case $\nu = 0$ and in 1882 by Struve \cite{Struve} for the case $\nu = 1$. The result for arbitrary values of $\nu$ was proved by Walker \cite[pp. 394--395]{Walker} in 1904 (see also \cite[\S 10.42]{Watson}). The asymptotic expansion of ${\bf L}_\nu  (z)$ is usually stated without the term involving $K_\nu (z)$, which is permitted if we restrict $z$ to the smaller sector $|\arg z|\leq \frac{\pi}{2}-\delta$. The Struve function is related to the Lommel function through the functional equation \cite[eq. (5), p. 80]{Luke}
\begin{equation}\label{eq84}
{\bf H}_\nu  (z) = Y_\nu  (z) + \frac{S_{\nu ,\nu } (z)}{2^{\nu  - 1} \sqrt \pi  \Gamma \left( \nu  + \frac{1}{2} \right)},
\end{equation}
which, after using \eqref{eq22}, gives
\begin{equation}\label{eq78}
R_N^{({\bf H})} (z,\nu ) = \frac{\sqrt \pi}{\Gamma \left( \nu  + \frac{1}{2} \right)}R_N^{(S)} (z,\nu ,\nu ) .
\end{equation}
A combination of \eqref{eq78} and \eqref{eq79}, for example, reproduces Watson's error estimate for the asymptotic expansion of ${\bf H}_\nu  (z)$ \cite[p. 333]{Watson}.

To derive the corresponding representations for the remainder terms $R_{N, \pm }^{({\bf L})} (z,\nu )$ in \eqref{eq80}, we combine the functional equation \cite[eqs. 11.2.2, 11.2.5 and 11.2.6]{NIST}
\begin{equation}\label{eq85}
{\bf L}_\nu  (z) = I_\nu  (z) \pm \frac{2}{\pi i}e^{ \pm \pi i\nu } K_\nu  (z) \pm ie^{ \pm \frac{\pi }{2}i\nu } ({\bf H}_\nu  (ze^{ \mp \frac{\pi }{2}i} ) - Y_\nu  (ze^{ \mp \frac{\pi }{2}i} ))
\end{equation}
with the equalities \eqref{eq81} and \eqref{eq78}. In this way, we obtain
\[
R_{N, \pm }^{({\bf L})} (z,\nu ) =  - R_N^{({\bf H})} (ze^{ \mp \frac{\pi }{2}i} ,\nu ) =  - \frac{\sqrt \pi}{\Gamma \left( \nu  + \frac{1}{2} \right)}R_N^{(S)} (ze^{ \mp \frac{\pi }{2}i} ,\nu ,\nu ).
\]

By taking the derivative of both sides of the equalities \eqref{eq81} and \eqref{eq80} with respect to $z$, it is seen that the functions ${\bf H}'_\nu  (z)$ and ${\bf L}'_\nu  (z)$ admit expansions of the form
\begin{equation}\label{eq82}
{\bf H}'_\nu  (z) =  Y'_\nu  (z) + \frac{1}{\pi }\left( \frac{1}{2}z \right)^{\nu  - 2} \left( \sum\limits_{n = 0}^{N - 1} \frac{\Gamma \left( n + \frac{1}{2} \right)\left( \frac{\nu }{2} - \frac{1}{2} - n \right)}{\Gamma \left( \nu  + \frac{1}{2} - n \right)\left( \frac{1}{2}z \right)^{2n} }  + R_N^{({\bf H}')} (z,\nu ) \right)
\end{equation}
and
\begin{equation}\label{eq83}
{\bf L}'_\nu  (z) = I'_\nu  (z) \pm \frac{2}{\pi i}e^{ \pm \pi i\nu } K'_\nu  (z) + \frac{1}{\pi}\left( \frac{1}{2}z \right)^{\nu  - 2} \left( \sum\limits_{n = 0}^{N - 1} \frac{( - 1)^{n + 1} \Gamma \left( n + \frac{1}{2} \right)\left( \frac{\nu }{2} - \frac{1}{2} - n \right)}{\Gamma \left( \nu  + \frac{1}{2} - n \right)\left( \frac{1}{2}z \right)^{2n} }  + R_{N, \pm }^{({\bf L}')} (z,\nu ) \right).
\end{equation}
If we differentiate each side of \eqref{eq84} with respect to $z$, substitute \eqref{eq76} into the right-hand side of the resulting equality and compare it with \eqref{eq82}, we find that
\begin{equation}\label{eq97}
R_N^{({\bf H'})} (z,\nu ) = \frac{\sqrt \pi  }{2\Gamma \left( \nu  + \frac{1}{2} \right)}R_N^{(S')} (z,\nu ,\nu ).
\end{equation}
To derive the analogous representations for the remainder terms $R_{N, \pm }^{({\bf L}')} (z,\nu )$ in \eqref{eq83}, we differentiate the functional equation \eqref{eq85} and combine it with the equalities \eqref{eq82} and \eqref{eq97}. In this way, we deduce
\[
R_{N, \pm }^{({\bf L'})} (z,\nu ) =  - R_N^{({\bf H'})} (ze^{ \mp \frac{\pi }{2}i} ,\nu ) =  - \frac{\sqrt \pi  }{2\Gamma \left( \nu  + \frac{1}{2} \right)}R_N^{(S')} (ze^{ \mp \frac{\pi }{2}i} ,\nu ,\nu ).
\]
In particular, $R_N^{({\bf H}')} (z,\nu ) = \mathcal{O}_{\nu ,N} (|z|^{ - 2N} )$ and $R_{N, \pm }^{({\bf L}')} (z,\nu ) = \mathcal{O}_{\nu ,N} (|z|^{ - 2N} )$ as $z\to \infty$ in the sectors $|\arg z|\leq \pi-\delta$ and $-\frac{\pi}{2} +\delta \leq \pm \arg z\leq \frac{3\pi}{2} -\delta$, with any fixed $\delta>0$. Consequently, the expansions \eqref{eq82} and \eqref{eq83} are the large-$z$ asymptotic expansions of the derivatives of the functions ${\bf H}_\nu  (z)$ and ${\bf L}_\nu  (z)$.

\section{Discussion}\label{section6}

In this paper, we have derived new integral representations and estimates for the remainder terms of the large-argument asymptotic expansions of the Lommel function and its derivative. We have also constructed error bounds for the re-expansions of these remainders. In this section, we shall discuss the sharpness of our error bounds. For the sake of brevity, we consider only the bounds for $R_N^{(S)} (z,\mu ,\nu )$ and $R_{N,M}^{(S)} (z,\mu ,\nu )$; the other remainder terms can be treated in a similar manner.

First, we consider the bounds for $R_N^{(S)} (z,\mu ,\nu )$ with $\mu$ and $\nu$ being real. Let $N$ be any non-negative integer, and let $\mu$ and $\nu$ be arbitrary real numbers such that $\mu + |\nu| < 2N + 1$. Under these assumptions, it follows from Theorem \ref{thm4} (or Theorem \ref{thm6}) and Propositions \ref{prop1}, \ref{prop2} and \ref{prop3} that
\begin{equation}\label{eq90}
\big| R_N^{(S)} (z,\mu ,\nu ) \big| \le \frac{\left| a_N ( - \mu ,\nu ) \right|}{\left| z \right|^{2N}} \times \begin{cases} 1 & \text{ if } \; \left|\arg z\right| \leq \frac{\pi}{4}, \\ \min\Big(\left|\csc ( 2\arg z)\right|,1 + \cfrac{1}{2}\chi(2N -\mu+ 1)\Big) & \text{ if } \; \frac{\pi}{4} < \left|\arg z\right| \leq \frac{\pi}{2}, \\ \cfrac{\sqrt {2\pi (2N -\mu+ 1)} }{2\left| \sin (\arg z) \right|^{2N -\mu+ 1} } + 1 + \cfrac{1}{2}\chi (2N -\mu+ 1) & \text{ if } \; \frac{\pi}{2} < \left|\arg z\right| < \pi. \end{cases}
\end{equation}
Here, following Olver \cite{Olver}, we use the notation
\begin{equation}\label{eq96}
\chi (p) = \pi ^{\frac{1}{2}} \frac{\Gamma \left( \frac{p}{2} + 1 \right)}{\Gamma \left( \frac{p}{2} + \frac{1}{2} \right)},
\end{equation}
for any $p >0$.

By the definition of an asymptotic expansion, $\lim _{z \to \infty } \left| z \right|^{2N} \big| R_N^{(S)} (z,\mu ,\nu ) \big| = \left| a_N ( - \mu ,\nu ) \right|$ for any fixed $N \geq 0$. Therefore, when $\left|\arg z\right| \leq \frac{\pi}{4}$, the estimate \eqref{eq90} and hence our error bound \eqref{eq60} (or \eqref{eq43}) cannot be improved, in general.

Consider now the case when $\frac{\pi}{4} < \left|\arg z\right| \leq \frac{\pi}{2}$. The bound \eqref{eq90} is reasonably sharp as long as $| \csc(2 \arg z)|$ is not very large, i.e., when $| \arg z|$ is bounded away from $\frac{\pi}{2}$. As $|\arg z|$ approaches $\frac{\pi}{2}$, the factor $|\csc(2 \arg z)|$ grows indefinitely, and therefore, it has to be replaced by $1 + \frac{1}{2}\chi(2N -\mu+ 1)$. By Stirling's formula,
\begin{equation}\label{eq91}
\chi(2N -\mu+ 1) = \sqrt {\frac{\pi (2N - \mu  + 1)}{2}} \left( 1 + \mathcal{O}_\mu  \left( \frac{1}{N} \right)\right)
\end{equation}
as $N\to +\infty$, and therefore the appearance of this factor in the bound \eqref{eq90} may give the impression that this estimate is unrealistic for large $N$. However, this is not the case, as the following argument shows. We assume that $\mu$ and $\nu$ are fixed, $|2N - \mu  + 1 - |z||$ is bounded, $| \arg z| = \frac{\pi}{2}$ and $|z|$ is large. Under these assumptions, Theorem \ref{thm7}, Proposition \ref{prop2} and Stirling's formula imply that
\begin{align*}
& \big| R_N^{(S)} (z,\mu ,\nu )\big| = \frac{2^{\mu  + \frac{1}{2}} \pi ^{\frac{1}{2}} \Gamma \left( 2N - \mu  + \frac{1}{2} \right)}{\left| \Gamma \left( {\frac{ - \mu  + \nu  + 1}{2}} \right)\Gamma \left( \frac{ - \mu  - \nu  + 1}{2} \right) \right|}\frac{1}{\left| z \right|^{2N}}\left( \Pi _{2N - \mu  + \frac{1}{2}} (z) + \mathcal{O}_{\mu ,\nu } \left( \frac{1}{\left| z \right|^{\frac{1}{2}}} \right) \right)
\\ &  = \frac{\left| a_N ( - \mu ,\nu ) \right|}{\left| z \right|^{2N}}\frac{2^{\mu  + \frac{1}{2}} \pi ^{\frac{1}{2}} \Gamma \left( 2N - \mu  + \frac{1}{2} \right)}{2^{2N} \Gamma \left( \frac{ - \mu  + \nu  + 1}{2} + N \right)\Gamma \left( \frac{ - \mu  - \nu  + 1}{2} + N \right)} \left( \Pi _{2N - \mu  + \frac{1}{2}} (z) + \mathcal{O}_{\mu ,\nu } \left( \frac{1}{\left| z \right|^{\frac{1}{2}} } \right) \right)
\\ & = \frac{\left| a_N ( - \mu ,\nu ) \right|}{\left| z \right|^{2N}}\left( 1 + \mathcal{O}_{\mu ,\nu } \left( \frac{1}{\left| z \right|} \right) \right)\left( \Pi _{2N - \mu  + \frac{1}{2}} (z) + \mathcal{O}_{\mu ,\nu } \left( \frac{1}{\left| z \right|^{\frac{1}{2}} } \right) \right).
\end{align*}
It is known that when $| \arg w| = \frac{\pi}{2}$ and $|p - |w||$ is bounded, the asymptotic approximation
\[
\Pi _p (w) = \frac{1}{2}\sqrt {\frac{\pi}{2}\left( p + \frac{1}{2} \right)} \left( 1 + \mathcal{O}\left( \frac{1}{\left| w \right|^{\frac{1}{2}} } \right) \right)
\]
holds as $|w| \to +\infty$ \cite{Nemes3}. Consequently,
\begin{equation}\label{eq92}
\big| R_N^{(S)} (z,\mu ,\nu ) \big| = \frac{\left| a_N ( - \mu ,\nu ) \right|}{\left| z \right|^{2N}}\frac{1}{2}\sqrt {\frac{\pi (2N - \mu  + 1)}{2}} \left( 1 + \mathcal{O}_{\mu ,\nu } \left( \frac{1}{\left| z \right|^{\frac{1}{2}}} \right) \right).
\end{equation}
On the other hand, the bound \eqref{eq90} and the asymptotic formula \eqref{eq91} show that
\begin{align*}
\big| R_N^{(S)} (z,\mu ,\nu ) \big| & \le \frac{\left| a_N ( - \mu ,\nu ) \right|}{\left| z \right|^{2N} }\left( 1 + \frac{1}{2}\sqrt {\frac{\pi (2N - \mu  + 1)}{2}} \left( 1 + \mathcal{O}_\mu  \left( \frac{1}{N} \right)\right) \right)
\\ & = \frac{\left| a_N ( - \mu ,\nu ) \right|}{\left| z \right|^{2N} }\frac{1}{2}\sqrt {\frac{\pi (2N - \mu  + 1)}{2}} \left( 1 + \mathcal{O}_\mu  \left( \frac{1}{\left| z \right|^{\frac{1}{2}} } \right) \right).
\end{align*}
Therefore, when $|\arg z| = \frac{\pi}{2}$, the upper bound in \eqref{eq90} is asymptotically equal to the right-hand side of the equality \eqref{eq92}. Consequently, when $| \arg z|$ is equal or smaller but close to $\frac{\pi}{2}$, the estimate \eqref{eq90} and thus the error bound \eqref{eq60} (or \eqref{eq43}) cannot be improved in general.

Finally, assume that $\frac{\pi}{2} < \left|\arg z\right| < \pi$. Elementary analysis shows that the factor
\[
\frac{\sqrt {2\pi (2N -\mu+ 1)} }{2\left| \sin (\arg z)\right|^{2N -\mu+ 1} } + 1 + \frac{1}{2}\chi (2N -\mu+ 1),
\]
as a function of $N$, remains bounded, provided that $|\arg z| - \frac{\pi}{2}=\mathcal{O}(N^{-\frac{1}{2}})$. This gives a reasonable estimate for the remainder term $R_N^{(S)} (z,\mu ,\nu )$. Otherwise, $\left| \sin (\arg z)\right|^{2N -\mu+ 1}$ can take very small values when $N$ is large, making the bound \eqref{eq90} completely unrealistic in most of the sectors $\frac{\pi}{2} < \left|\arg z\right| < \pi$. This deficiency of the bound \eqref{eq90} (and \eqref{eq60} or \eqref{eq43}) is necessary and is due to the omission of certain exponentially small terms arising from the Stokes phenomenon related to the asymptotic expansion \eqref{eq1} of the Lommel function. Thus, the use of the asymptotic expansion \eqref{eq1} should be confined to the sector $|\arg z|\leq \frac{\pi}{2}$. For other ranges of $\arg z$, one should use analytic continuation formulae for the Lommel function.

We continue by discussing the bound \eqref{eq43} which covers the case of complex $\mu$ and $\nu$. It is natural to compare the right-hand side of \eqref{eq43} to the first neglected term. Therefore, we begin by expressing the quantity $a_N ( - \Re (\mu ),\Re (\nu ))$ in terms of $a_N ( - \mu ,\nu )$:
\begin{gather}\label{eq101}
\begin{split}
a_N ( - \Re (\mu ),\Re (\nu )) = \; & \frac{\Gamma \big( \frac{ - \Re (\mu ) + \Re (\nu ) + 1}{2} + N \big)\Gamma \big( \frac{ - \Re (\mu ) - \Re (\nu ) + 1}{2} + N \big)}{\Gamma \left( \frac{ - \mu  + \nu  + 1}{2} + N \right)\Gamma \left( \frac{ - \mu  - \nu  + 1}{2} + N \right)} \\ & \times \frac{\Gamma \left( \frac{ - \mu  + \nu  + 1}{2} \right)\Gamma \left( \frac{ - \mu  - \nu  + 1}{2} \right)}{\Gamma \big( \frac{ - \Re (\mu ) + \Re (\nu ) + 1}{2} \big)\Gamma \big( \frac{ - \Re (\mu ) - \Re (\nu ) + 1}{2} \big)}a_N ( - \mu ,\nu ).
\end{split}
\end{gather}
Consider first the case when $\left|\arg z\right| < \frac{\pi}{2}$. Let $N$ be a non-negative integer and let $\mu$ and $\nu$ be complex numbers such that $\Re(\mu)+|\Re(\nu)|<2N+1$. Under these assumptions, it follows from Theorem \ref{thm6}, Proposition \ref{prop1} and \eqref{eq101} that
\begin{gather}\label{eq100}
\begin{split}
\big| R_N^{(S)} (z,\mu ,\nu ) \big| \le \; & \left| \frac{\Gamma \big( \frac{ - \Re (\mu ) + \Re (\nu ) + 1}{2} + N \big)\Gamma \big( \frac{ - \Re (\mu ) - \Re (\nu ) + 1}{2} + N \big)}{\Gamma \left( \frac{ - \mu  + \nu  + 1}{2} + N \right)\Gamma \left( \frac{ - \mu  - \nu  + 1}{2} + N \right)} \right|\frac{\left| a_N ( - \mu ,\nu ) \right|}{\left| z \right|^{2N} } \\ & \times \begin{cases} 1 & \text{ if } \; \left|\arg z\right| \leq \frac{\pi}{4}, \\ \left|\csc ( 2\arg z)\right| & \text{ if } \; \frac{\pi}{4} < \left|\arg z\right| < \frac{\pi}{2}. \end{cases}
\end{split}
\end{gather}
Now, we make the assumptions that $2N - \Re (\mu ) \pm \Re (\nu ) \to  + \infty$ and $\Im (\mu ) \pm \Im (\nu ) = o(N^{\frac{1}{2}} )$ as $N\to +\infty$. With these provisos, it can easily be shown, using for example Stirling's formula, that the quotient of gamma functions in \eqref{eq100} is asymptotically $1$ for large $N$. Consequently, if $N$ is large, the estimates \eqref{eq100} and \eqref{eq43} are reasonably sharp in most of the sector $\left|\arg z\right| < \frac{\pi}{2}$. If the assumption $\Im (\mu ) \pm \Im (\nu ) = o(N^{\frac{1}{2}} )$ is replaced by the weaker condition $\Im (\mu ) \pm \Im (\nu ) = \mathcal{O}(N^{\frac{1}{2}} )$, the quotient in (5.5) is still bounded and hence, the estimates \eqref{eq100} and \eqref{eq43} are still relatively sharp. Otherwise, if $2N - \Re (\mu ) \pm \Re (\nu )$ is small and $\Im (\mu ) \pm \Im (\nu )$ is much larger than $N^{\frac{1}{2}}$, this quotient may grow exponentially fast in $|\Im (\mu ) \pm \Im (\nu )|$, which can make the bound \eqref{eq100} completely unrealistic. Therefore, if the asymptotic expansion \eqref{eq1} is truncated just before its numerically least term, i.e., when $N\approx \frac{1}{2}|z|$, the estimate \eqref{eq100} is reasonable in most of the sector $\left|\arg z\right| < \frac{\pi}{2}$ as long as $\mu\pm\nu=\mathcal{O}(|z|^{\frac{1}{2}} )$. Although \eqref{eq1} is valid in the sense of generalized asymptotic expansions \cite[\S2.1(v)]{NIST} provided $\mu\pm\nu=o(|z|)$, the stronger assumption $\mu\pm\nu=\mathcal{O}(|z|^{\frac{1}{2}} )$ guarantees the rapid decay of the first several terms of \eqref{eq1} for large $z$.

When $|\arg z|$ is equal or smaller but close to $\frac{\pi}{2}$, the estimate \eqref{eq100} may be replaced by
\begin{gather}\label{eq102}
\begin{split}
& \big| R_N^{(S)} (z,\mu ,\nu ) \big| \le \left| \frac{\Gamma \big( \frac{ - \Re (\mu ) + \Re (\nu ) + 1}{2} + N \big)\Gamma \big( \frac{ - \Re (\mu ) - \Re (\nu ) + 1}{2} + N \big)}{\Gamma \left( \frac{ - \mu  + \nu  + 1}{2} + N \right)\Gamma \left( \frac{ - \mu  - \nu  + 1}{2} + N \right)}\frac{\Gamma (2N - \mu  + 1)}{\Gamma (2N - \Re (\mu ) + 1)} \right|\frac{\left| a_N ( - \mu ,\nu ) \right|}{\left| z \right|^{2N}}
\\ & \times \left( \frac{1}{2} + \frac{\left| 2N - \mu  + 1 \right|}{2(2N - \Re (\mu ) + 1)}\chi (2N - \Re (\mu ) + 1)\max (1,e^{ \Im (\mu )\arg z} ) + \frac{\Gamma (2N - \Re (\mu ) + 1)}{2\left| \Gamma (2N - \mu  + 1) \right|} \right),
\end{split}
\end{gather}
which can be derived from Theorem \ref{thm6}, Proposition \ref{prop2} and \eqref{eq101}. If $\mu$ and $\nu$ satisfy the requirements posed in the previous paragraph, then the ratio of gamma functions in the first line of \eqref{eq102} is asymptotically $1$ for large $N$. Also, if in addition $\Im (\mu )\arg z$ is non-positive, then the factor in the second line is $\mathcal{O}(N^{\frac{1}{2}})$ when $N$ is large. By an argument similar to that used in the case of real $\mu$ an $\nu$, it follows that \eqref{eq102} is a realistic bound. Otherwise, if $2N - \Re (\mu ) \pm \Re (\nu )$ is small and $\Im (\mu ) \pm \Im (\nu )$ is much larger than $N^{\frac{1}{2}}$ or $\Im (\mu )\arg z$ is positive and large, the right-hand side of the inequality \eqref{eq102} may be a serious overestimate of the actual error.

Similarly to the case of real $\mu$ and $\nu$, for values of $z$ outside the closed sector $\left|\arg z\right| \leq \frac{\pi}{2}$, one should make use of analytic continuation formulae.

Tables \ref{table1} and \ref{table2} below illustrate the numerical performance of the bound \eqref{eq90} for some specific values of the various parameters. It is clearly seen from Table \ref{table2} that the asymptotic expansion \eqref{eq1} becomes less efficient once $\mu \pm \nu$ is comparable to $|z|$. Similarly, Table \ref{table3} below illustrates the numerical performance of the bounds \eqref{eq100} and \eqref{eq102} for some specific values of the various parameters. The tables demonstrate well that the bounds behave in a manner that is rather similar to the actual values of the modulus of the remainder term $R_N^{(S)}(z,\mu,\nu)$.

We conclude this section with a brief discussion of the estimate \eqref{eq55} for $R_{N,M}^{(S)} (z,\mu,\nu)$. When $z$ is large, $|z|^M \big| R_M^{(K)} ( |z |,\nu) \big| \lesssim | a_M (\nu) |$ holds, and therefore the first term on the right-hand side of the inequality \eqref{eq55} is of the same order of magnitude as the first neglected term in the expansion \eqref{eq23}. It can be shown that, when $2N-M-\mu+\frac{1}{2}$ is large, the second term is comparable with, or less than, the first term (except near the zeros of $\Pi_{2N-M-\mu+\frac{1}{2}}(z)$). The proof of this fact is identical to the proof given by Boyd \cite{Boyd} for the analogous re-expansion of the remainder term $R_{N}^{(K)} (z,\nu)$ and it is therefore not pursued here. In summary, the bound \eqref{eq55} is comparable with the first neglected term in the expansion \eqref{eq23}, unless $z$ is close to a zero of $\Pi_{2N-M-\mu+\frac{1}{2}}(z)$, and thus, this bound is reasonably sharp.

\begin{table}[ht]
\begin{center}
{\renewcommand{\arraystretch}{1.4}
\begin{tabular}{ c | c|c|c|c  }
  \hline			
  $\arg z$ & $|R_5^{(S)}(z,\mu,\nu)|$ & bound \eqref{eq90} & $|R_{10}^{(S)}(z,\mu,\nu)|$ & bound \eqref{eq90} \\ \hline
  $0$ & $0.47440\times 10^{-5}$ & $0.65562 \times 10^{-5}$ & $0.48851 \times 10^{-6}$ & $1.07336 \times 10^{-6}$\\
  $\frac{\pi}{4}$ & $0.59063 \times 10^{-5}$ & $0.65562 \times 10^{-5}$ & $0.66414 \times 10^{-6}$ & $1.07336 \times 10^{-6}$\\
  $\frac{3\pi}{8}$ & $0.79416 \times 10^{-5}$ & $0.92718 \times 10^{-5}$ & $0.10712 \times 10^{-5}$ & $0.15179 \times 10^{-5}$\\
  $\frac{\pi}{2}$ & $0.12556 \times 10^{-4}$ & $0.21657 \times 10^{-4}$ & $0.28159 \times 10^{-5}$ & $0.43344 \times 10^{-5}$\\
\end{tabular}}
\end{center}
\caption{Numerical values of the modulus of the remainder term $R_N^{(S)}(z,\mu,\nu)$, with $N=5$, $10$, $|z|=20$, $\mu=-2$, $\nu=\frac{3}{2}$ and various values of $\arg z$, together with the corresponding estimates using the bound \eqref{eq90}.}
\label{table1}
\end{table}

\begin{table}[ht]
\begin{center}
{\renewcommand{\arraystretch}{1.4}
\begin{tabular}{ c | c|c|c|c  }
  \hline			
  $\arg z$ & $|R_5^{(S)}(z,\mu,\nu)|$ & bound \eqref{eq90} & $|R_{10}^{(S)}(z,\mu,\nu)|$ & bound \eqref{eq90} \\ \hline
  $0$ & $0.32502\times 10^{-3}$ & $0.52337 \times 10^{-3}$ & $0.23193 \times 10^{-3}$ & $0.60589 \times 10^{-3}$\\
  $\frac{\pi}{4}$ & $0.42580 \times 10^{-3}$ & $0.52337 \times 10^{-3}$ & $0.31254 \times 10^{-3}$ & $0.60589 \times 10^{-3}$\\
  $\frac{3\pi}{8}$ & $0.63393 \times 10^{-3}$ & $0.74016 \times 10^{-3}$ & $0.49243 \times 10^{-3}$ & $0.85687 \times 10^{-3}$\\
  $\frac{\pi}{2}$ & $0.13345 \times 10^{-2}$ & $0.18957 \times 10^{-2}$ & $0.11804 \times 10^{-2}$ & $0.25972 \times 10^{-2}$\\
\end{tabular}}
\end{center}
\caption{Numerical values of the modulus of the remainder term $R_N^{(S)}(z,\mu,\nu)$, with $N=5$, $10$, $|z|=20$, $\mu=-6$, $\nu=\frac{9}{2}$ and various values of $\arg z$, together with the corresponding estimates using the bound \eqref{eq90}.}
\label{table2}
\end{table}

\begin{table}[ht]
\begin{center}
{\renewcommand{\arraystretch}{1.4}
\begin{tabular}{ c | c|c|c|c  }
  \hline			
  $\arg z$ & $|R_5^{(S)}(z,\mu,\nu)|$ & bound min(\eqref{eq100},\eqref{eq102}) & $|R_{10}^{(S)}(z,\mu,\nu)|$ & bound min(\eqref{eq100},\eqref{eq102}) \\ \hline
  $0$ & $0.32299\times 10^{-7}$ & $0.51174 \times 10^{-7}$ & $0.25597 \times 10^{-9}$ & $0.53363 \times 10^{-9}$\\
  $\frac{\pi}{4}$ & $0.40084 \times 10^{-7}$ & $0.51174 \times 10^{-7}$ & $0.37853 \times 10^{-9}$ & $0.53363 \times 10^{-9}$\\
  $\frac{3\pi}{8}$ & $0.47580 \times 10^{-7}$ & $0.72371 \times 10^{-7}$ & $0.65102 \times 10^{-9}$ & $0.75467 \times 10^{-9}$\\
  $\frac{\pi}{2}$ & $0.50481 \times 10^{-7}$ & $19.03354 \times 10^{-7}\,\;$ & $0.19565 \times 10^{-8}$ & $3.13582 \times 10^{-8}$\\
\end{tabular}}
\end{center}
\caption{Numerical values of the modulus of the remainder term $R_N^{(S)}(z,\mu,\nu)$, with $N=5$, $10$, $|z|=20$, $\mu=2+2i$, $\nu=\frac{1}{2}-i$ and various values of $\arg z$, together with the corresponding estimates using the minimum of the bounds \eqref{eq100} and \eqref{eq102}.}
\label{table3}
\end{table}

\section*{Acknowledgement} The author's research was supported by a research grant (GRANT11863412/70NANB15H221) from the National Institute of Standards and Technology. The author thanks the anonymous referees for their helpful comments and suggestions on the manuscript.

\appendix

\section{Bounds for the basic terminant}\label{appendixa}

In this appendix, we give some estimates for the absolute value of the basic terminant $\Pi _p (w)$ with $\Re(p) > 0$. These estimates depend only on $p$ and the argument of $w$ and therefore also provide bounds for the quantity $\mathop {\sup }\nolimits_{r \ge 1} \left| {\Pi _{p} ( zr)} \right|$ which appears in Theorems \ref{thm4} and \ref{thm6}. For proofs, see \cite{Nemes2} and \cite{Nemes3}.

\begin{proposition}\label{prop1}
For any complex $p$ with $\Re(p) > 0$ it holds that
\[
\left| {\Pi _p (w)} \right| \le \frac{\Gamma (\Re (p))}{\left| \Gamma (p) \right|} \times \begin{cases} 1 & \text{ if } \; \left|\arg w\right| \leq \frac{\pi}{4}, \\ \left|\csc ( 2\arg w)\right| & \text{ if } \; \frac{\pi}{4} < \left|\arg w\right| < \frac{\pi}{2}. \end{cases}
\]
Moreover, when $w$ and $p$ are positive, we have $0<\Pi _p (w)<1$.
\end{proposition}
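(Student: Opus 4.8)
The plan is to work directly from the integral representation \eqref{eq01}, which is valid throughout the range $|\arg w| < \frac{\pi}{2}$ covered by the proposition (the excluded boundary $|\arg w| = \frac{\pi}{2}$ plays no role). Writing $w = |w| e^{i\theta}$ with $\theta = \arg w$, $|\theta| < \frac{\pi}{2}$, and taking absolute values inside \eqref{eq01}, I would first use $|t^{p-1}| = t^{\Re(p) - 1}$ for $t > 0$ to obtain
\[
|\Pi_p(w)| \le \frac{1}{|\Gamma(p)|} \int_0^{+\infty} \frac{t^{\Re(p) - 1} e^{-t}}{|1 + (t/w)^2|}\, dt.
\]
Everything then reduces to bounding the denominator $|1 + (t/w)^2|$ from below by a quantity depending only on $\theta$.

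The key computation is elementary. Setting $s = (t/|w|)^2 \ge 0$, one has $(t/w)^2 = s\, e^{-2i\theta}$, and a direct expansion gives
\[
|1 + (t/w)^2|^2 = (1 + s\cos 2\theta)^2 + (s\sin 2\theta)^2 = 1 + 2s\cos 2\theta + s^2.
\]
I would then analyze the quadratic $f(s) = 1 + 2s\cos 2\theta + s^2$ on $s \ge 0$ in two cases. When $|\theta| \le \frac{\pi}{4}$ we have $\cos 2\theta \ge 0$, so $f(s) \ge 1$ for all $s \ge 0$, whence $|1 + (t/w)^2| \ge 1$. When $\frac{\pi}{4} < |\theta| < \frac{\pi}{2}$ we have $\cos 2\theta < 0$, and $f$ attains its minimum over $s \ge 0$ at the interior point $s = -\cos 2\theta > 0$, with minimal value $f(-\cos 2\theta) = 1 - \cos^2 2\theta = \sin^2 2\theta$; hence $|1 + (t/w)^2| \ge |\sin 2\theta|$.

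Substituting these lower bounds back into the integral and recognizing $\int_0^{+\infty} t^{\Re(p) - 1} e^{-t}\, dt = \Gamma(\Re(p))$ yields, respectively, $|\Pi_p(w)| \le \Gamma(\Re(p))/|\Gamma(p)|$ and $|\Pi_p(w)| \le |\csc 2\theta|\,\Gamma(\Re(p))/|\Gamma(p)|$, which is exactly the claimed piecewise bound. For the final assertion, when $w$ and $p$ are positive the integrand in \eqref{eq01} is strictly positive and is dominated by $t^{p-1} e^{-t}$ since $0 < (1 + (t/w)^2)^{-1} < 1$; integrating then gives $0 < \Pi_p(w) < 1$ at once. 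The only nonroutine ingredient is the minimization of $f$, which is a one-line exercise, so I anticipate no real obstacle; the only point requiring care is checking that the critical point $s = -\cos 2\theta$ lies in the admissible range $s \ge 0$ precisely when $|\theta| > \frac{\pi}{4}$, so that the two cases match the stated thresholds.
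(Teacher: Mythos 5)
Your proof is correct: the paper itself gives no in-text proof of this proposition (it defers to the cited references), and your argument---taking absolute values inside the representation \eqref{eq01} and bounding the denominator below via $|1+(t/w)^2|^2 = 1 + 2s\cos(2\arg w) + s^2 \ge \min\bigl(1,\sin^2(2\arg w)\bigr)$ with $s=(t/|w|)^2$---is exactly the standard argument used in those references, with the case split $\cos(2\arg w)\ge 0$ versus $\cos(2\arg w)<0$ matching the stated thresholds. The final assertion $0<\Pi_p(w)<1$ for positive $w$ and $p$ follows, as you say, from the strict pointwise bounds $0<\bigl(1+(t/w)^2\bigr)^{-1}<1$, so the proposal is complete as written.
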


We remark that it was shown by the author \cite{Nemes2} that $\left| {\Pi _p (w)} \right| \le \sqrt {\frac{e}{4}\left( {p + \frac{3}{2}} \right)}$ provided that $\frac{\pi }{4} < \left| {\arg w} \right| \le \frac{\pi }{2}$ and $p$ is real and positive, which improves on the bound \eqref{eq65} near $|\arg w|=\frac{\pi}{2}$.

\begin{proposition} For any complex $p$ with $\Re(p) > 0$, we have
\[
\left| \Pi_p (w) \right| \le \frac{1}{2}\sec ^{\Re(p)} (\arg w)\max (1,e^{\Im (p)\left(  \mp \frac{\pi }{2} - \arg w \right)} ) + \frac{1}{2}\max (1,e^{\Im (p)\left(  \pm \frac{\pi }{2} - \arg w \right)} )
\]
and
\[
\left| \Pi_p (w) \right| \le \frac{1}{2}\sec ^{\Re(p)} (\arg w)\max (1,e^{\Im (p)\left(  \mp \frac{\pi }{2} - \arg w \right)} ) + \frac{\Gamma (\Re (p))}{2\left| \Gamma (p) \right|},
\]
for $0 \le \pm\arg w < \frac{\pi }{2}$.
\end{proposition}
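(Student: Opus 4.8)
The plan is to bound separately the two terms appearing in the defining formula for $\Pi_p(w)$, since these will match precisely the two summands on the right-hand side of the claimed inequalities. I would first treat the upper-sign case $0 \le \arg w < \frac{\pi}{2}$ and recover the lower-sign case at the end by symmetry. Accordingly, write $\Pi_p(w) = T_+ + T_-$, where
\[
T_\pm = \frac{w^p}{2}e^{\pm\frac{\pi}{2}ip}e^{\pm iw}\Gamma\big(1-p, we^{\pm\frac{\pi}{2}i}\big).
\]
The first step is to represent each incomplete gamma function by deforming the contour in $\Gamma(1-p,z) = \int_z^{+\infty} s^{-p}e^{-s}\,ds$ onto the horizontal ray $s = z + u$, $u\in[0,+\infty)$. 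For $z = we^{\frac{\pi}{2}i} = iw$ this ray keeps the constant imaginary part $|w|\cos(\arg w) > 0$, and for $z = we^{-\frac{\pi}{2}i} = -iw$ the constant imaginary part $-|w|\cos(\arg w) < 0$; in both cases the ray stays off the branch cut of $s^{-p}$ on the negative real axis and $e^{-s}$ secures convergence, so the deformation is legitimate. It yields $\Gamma(1-p, \pm iw) = e^{\mp iw}\int_0^{+\infty} (u \pm iw)^{-p}e^{-u}\,du$. After the prefactor $w^p e^{\pm\frac{\pi}{2}ip} = (\pm iw)^p$ is combined with $(u\pm iw)^{-p}$ and the exponentials $e^{\pm iw}e^{\mp iw}$ cancel, one is left with the clean form
\[
T_\pm = \frac{1}{2}\int_0^{+\infty} \left(\frac{\pm iw}{u \pm iw}\right)^p e^{-u}\,du.
\]

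The heart of the argument is then an elementary estimate of $\big|\big(\tfrac{\pm iw}{u\pm iw}\big)^p\big| = \big|\tfrac{\pm iw}{u\pm iw}\big|^{\Re(p)} e^{-\Im(p)\,\arg\left(\frac{\pm iw}{u \pm iw}\right)}$ for $u \ge 0$. Writing $\arg w = \phi \in [0, \frac{\pi}{2})$, the minimum of $|u + iw|$ over $u\ge 0$ equals $|w|\cos\phi$, whence $\big|\tfrac{iw}{u+iw}\big| \le \sec\phi$, which produces the factor $\sec^{\Re(p)}(\arg w)$; meanwhile $\arg\big(\tfrac{iw}{u+iw}\big)$ sweeps the interval $[0,\phi+\tfrac{\pi}{2})$ as $u$ runs over $[0,+\infty)$, so the exponential factor is at most $\max(1, e^{-\Im(p)(\frac{\pi}{2}+\arg w)}) = \max(1, e^{\Im(p)(-\frac{\pi}{2}-\arg w)})$. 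Since $\int_0^{+\infty}e^{-u}\,du = 1$, this bounds $|T_+|$ by the first summand. For $T_-$ one finds instead $|u-iw|^2 = u^2 + 2u|w|\sin\phi + |w|^2 \ge |w|^2$, so $\big|\tfrac{-iw}{u-iw}\big| \le 1$ and no secant appears, while $\arg\big(\tfrac{-iw}{u-iw}\big)$ ranges over $(\phi-\tfrac{\pi}{2},0]$, giving $\max(1, e^{\Im(p)(\frac{\pi}{2}-\arg w)})$; this is the second summand of the first inequality.

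To obtain the alternative second inequality, I would instead estimate $T_-$ from the equivalent representation $T_- = -\frac{iw}{2\Gamma(p)}\int_0^{+\infty}\frac{t^{p-1}e^{-t}}{t - iw}\,dt$, which follows from \eqref{eq01} after writing $\frac{w^2}{t^2+w^2}$ in partial fractions. Using $|t-iw| \ge |w|$ for all $t\ge 0$ (the same inequality as above with $t$ in place of $u$) gives $|T_-| \le \frac{\Gamma(\Re(p))}{2|\Gamma(p)|}$, which is the second term of the second bound, the estimate for $T_+$ being unchanged.

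Finally, the lower-sign range $-\frac{\pi}{2} < \arg w \le 0$ would be dispatched by the conjugation symmetry $|\Pi_p(w)| = |\Pi_{\bar p}(\bar w)|$, immediate from \eqref{eq01}, which converts it to the already-proved upper-sign bound evaluated at $\bar p$ and $\bar w$ and reproduces exactly the paired $\mp,\pm$ sign choices. I expect the main difficulty to be bookkeeping rather than analysis: one must pin down the exact ranges of $\arg\big(\tfrac{\pm iw}{u\pm iw}\big)$ so that the exponents emerge as $\Im(p)(\mp\tfrac{\pi}{2}-\arg w)$ and $\Im(p)(\pm\tfrac{\pi}{2}-\arg w)$ and not off by a sign, and correctly assign the secant factor to $T_+$ while keeping $T_-$ bounded (the two roles interchanging under conjugation). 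One must also verify that the horizontal-ray deformation stays off the branch cut of $s^{-p}$ throughout $0\le\arg w<\tfrac{\pi}{2}$; the relevant ray retains the nonzero imaginary part $\pm|w|\cos(\arg w)$, so this holds, although the $T_+$ ray grazes the cut as $\arg w\to\tfrac{\pi}{2}$, in agreement with the blow-up of $\sec^{\Re(p)}(\arg w)$ there.
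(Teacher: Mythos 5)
Your argument is correct, and it is worth noting that the paper itself offers no proof of this proposition: Appendix A defers all terminant bounds to the references [Nemes2, Nemes3], so there is no in-paper argument to compare against. Your proof is, in substance, the same strategy used in those sources: split $\Pi_p(w)=T_++T_-$ into its two incomplete-gamma constituents, bound the term whose singularity direction lies near the negative real axis (this is $T_+$ when $0\le\arg w<\tfrac{\pi}{2}$) with the factor $\sec^{\Re(p)}(\arg w)$, bound the other term either trivially or through the $\Gamma(\Re(p))/|\Gamma(p)|$ device, and dispatch the conjugate sector by the symmetry $|\Pi_p(w)|=|\Pi_{\bar p}(\bar w)|$. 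All the key computations check out: the horizontal-ray deformation is legitimate since $\Im(\pm iw)=\pm|w|\cos(\arg w)\ne 0$ keeps the ray off the cut; the branch identities $w^pe^{\pm\frac{\pi}{2}ip}=(\pm iw)^p$ and $(\pm iw)^p/(u\pm iw)^p=\bigl(\tfrac{\pm iw}{u\pm iw}\bigr)^p$ hold because the relevant argument differences stay inside $(-\pi,\pi]$; the modulus and argument ranges you record ($|u+iw|\ge|w|\cos\phi$, $|u-iw|\ge|w|$, $\arg$ ranges $[0,\phi+\tfrac{\pi}{2})$ and $(\phi-\tfrac{\pi}{2},0]$) are exactly right and reproduce the stated exponents.

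One small imprecision deserves repair. You justify the identity $T_-=-\frac{iw}{2\Gamma(p)}\int_0^{+\infty}\frac{t^{p-1}e^{-t}}{t-iw}\,dt$ by "partial fractions applied to \eqref{eq01}", but partial fractions only shows that the \emph{sum} of the two partial-fraction integrals equals $T_++T_-$; it does not by itself match each integral to the corresponding $T_\pm$. The identification is true and needed (the $\Gamma(\Re(p))/(2|\Gamma(p)|)$ bound requires the $t^{p-1}$ form), and it follows in one line from the representation $\Gamma(1-p,z)=\frac{z^{1-p}e^{-z}}{\Gamma(p)}\int_0^{+\infty}\frac{t^{p-1}e^{-t}}{z+t}\,dt$ (valid for $\Re(p)>0$, $|\arg z|<\pi$; cf. \cite[eq. 8.6.4]{NIST}) applied with $z=\mp iw$, since then
\begin{equation*}
T_\pm=\frac{w^pe^{\pm\frac{\pi}{2}ip}}{2}\,\frac{(\pm iw)^{1-p}}{\Gamma(p)}\int_0^{+\infty}\frac{t^{p-1}e^{-t}}{t\pm iw}\,dt=\frac{\pm iw}{2\Gamma(p)}\int_0^{+\infty}\frac{t^{p-1}e^{-t}}{t\pm iw}\,dt.
\end{equation*}
Alternatively, the same identity follows from your own ray representation by writing $\frac{1}{u\mp iw}=\int_0^{+\infty}e^{-x(u\mp iw)}\,dx$ and interchanging the integrals. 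With this step made explicit, the proof is complete.
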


The following estimate is valid for positive real values of the order $p$.

\begin{proposition} For any $p>0$ and $w$ with $\frac{\pi}{4} < \left| {\arg w} \right| < \pi$, we have
\begin{equation}\label{eq86}
\left| \Pi _p \left( w \right) \right| \le \frac{\left| \csc \left( 2\left( \arg w - \varphi \right) \right) \right|}{\cos ^p \varphi },
\end{equation}
where $\varphi$ is the unique solution of the implicit equation
\[
\left( {p + 2} \right)\cos \left( 2\arg w -3\varphi \right) = \left( p - 2 \right)\cos\left(2\arg w-\varphi\right)
\]
that satisfies $0 < \varphi  <  - \frac{\pi}{4} + \arg w$ if $\frac{\pi}{4} < \arg w  < \frac{\pi}{2}$, $ - \frac{\pi}{2}  + \arg w  < \varphi  < -\frac{\pi}{4}+\arg w$ if $\frac{\pi}{2}  \le \arg w  < \frac{3\pi}{4}$, $ - \frac{\pi}{2}  + \arg w  < \varphi  < \frac{\pi }{2}$ if $\frac{3\pi}{4}  \le \arg w  < \pi$, $\frac{\pi}{4} + \arg w < \varphi  <  0$ if $-\frac{\pi }{2} < \arg w  < -\frac{\pi}{4}$, $\frac{\pi}{4}  + \arg w  < \varphi  < \frac{\pi}{2}+\arg w$ if $-\frac{3\pi}{4}  < \arg w  \le -\frac{\pi}{2}$ and $ - \frac{\pi}{2}  < \varphi  < \frac{\pi }{2}+ \arg w$ if $-\pi < \arg w \le -\frac{3\pi}{4}$.
\end{proposition}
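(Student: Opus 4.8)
The plan is to establish \eqref{eq86} by rotating the contour in the integral representation \eqref{eq01} and then optimising the resulting family of bounds over the rotation angle. Writing $\theta = \arg w$ and substituting $t = s e^{i\varphi}$ in \eqref{eq01}, one obtains
\[
\Pi_p(w) = \frac{e^{ip\varphi}}{\Gamma(p)} \int_0^{+\infty} \frac{s^{p-1} e^{-s e^{i\varphi}}}{1 + (s e^{i\varphi}/w)^2}\, ds,
\]
the rotation being justified by Cauchy's theorem once we check that the integrand is analytic in the sector between the rays $\arg t = 0$ and $\arg t = \varphi$, and that the circular arcs at infinity contribute nothing (which needs $\cos\varphi > 0$). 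The integrand has poles only at $t = \pm i w$, i.e. at $\arg t = \theta \pm \tfrac{\pi}{2}$; the admissible rotation angles $\varphi$ are therefore exactly those for which the swept-out sector contains neither pole, and these restrictions are the source of the interval constraints on $\varphi$ listed in the statement. As $w$ is continued from the right half-plane towards $|\arg w| = \pi$, the relevant pole migrates towards the positive real axis and the contour must be dragged along with it, which is why six separate $\varphi$-ranges appear.

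Next I would take absolute values. Since $p$ is real and positive, $|e^{ip\varphi}| = 1$ and $|\Gamma(p)| = \Gamma(p)$, while $|e^{-s e^{i\varphi}}| = e^{-s\cos\varphi}$. For the denominator, set $x = (s/|w|)^2 \ge 0$ and $\psi = 2(\varphi - \theta)$, so that $(s e^{i\varphi}/w)^2 = x e^{i\psi}$ and $|1 + x e^{i\psi}|^2 = 1 + 2x\cos\psi + x^2$. Minimising the right-hand side over $x \ge 0$ gives $|1 + x e^{i\psi}| \ge |\sin\psi|$ whenever $\cos\psi < 0$, which is precisely the regime $\tfrac{\pi}{4} < |\varphi - \theta| < \tfrac{3\pi}{4}$ that is built into the admissible ranges. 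Combining these estimates and evaluating the elementary integral $\int_0^{+\infty} s^{p-1} e^{-s\cos\varphi}\,ds = \Gamma(p)/\cos^p\varphi$ (valid as $\cos\varphi > 0$) yields
\[
\big| \Pi_p(w) \big| \le \frac{1}{|\sin(2(\varphi - \theta))|\,\cos^p\varphi} = \frac{|\csc(2(\arg w - \varphi))|}{\cos^p\varphi}
\]
for \emph{every} admissible $\varphi$.

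The bound \eqref{eq86} is then the sharpest member of this family, so I would minimise the right-hand side over $\varphi$. Writing $g(\varphi) = -\ln|\sin(2(\theta - \varphi))| - p\ln\cos\varphi$, one computes $g'(\varphi) = 2\cot(2(\theta - \varphi)) + p\tan\varphi$; setting $g'(\varphi) = 0$ and clearing denominators gives $2\cos(2(\theta-\varphi))\cos\varphi + p\sin(2(\theta-\varphi))\sin\varphi = 0$, and a product-to-sum expansion turns this into the stated implicit equation $(p+2)\cos(2\arg w - 3\varphi) = (p-2)\cos(2\arg w - \varphi)$.

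The main obstacle is the final step: proving that this critical equation has a unique root in each of the six prescribed sub-intervals and that this root is genuinely the minimiser. I would handle this by a boundary-behaviour and monotonicity analysis of $g$ on each admissible interval: as $\varphi$ approaches either endpoint, one of $\sin(2(\theta-\varphi))$ or $\cos\varphi$ tends to zero, so $g(\varphi) \to +\infty$ and the minimum is attained at an interior critical point; uniqueness then follows once $g'$ is shown to change sign exactly once, which I expect to reduce to the monotonicity of $\cot(2(\theta-\varphi))$ and $\tan\varphi$ on the relevant ranges. The bookkeeping for $\tfrac{\pi}{2} \le \arg w < \pi$, where the pole has crossed the real axis and the contour must be rotated in the same sense as $w$, is the most delicate part and must be carried out separately for each listed sub-range.
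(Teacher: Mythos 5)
Your overall strategy is the right one, and it is in fact essentially the method of the works the paper cites for this proposition (the paper itself gives no proof of the appendix propositions, deferring to \cite{Nemes2} and \cite{Nemes3}): rotate the contour in \eqref{eq01} to the ray $\arg t = \varphi$, bound the denominator below by $\left|\sin\left(2(\arg w - \varphi)\right)\right|$ (valid precisely when $\cos\left(2(\arg w - \varphi)\right) < 0$, with the rotated integral furnishing the analytic continuation past $|\arg w| = \tfrac{\pi}{2}$), evaluate $\int_0^{+\infty} s^{p-1}e^{-s\cos\varphi}\,ds = \Gamma(p)\cos^{-p}\varphi$, and optimise over $\varphi$. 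Your computation $g'(\varphi) = 2\cot\left(2(\theta - \varphi)\right) + p\tan\varphi$ (writing $\theta = \arg w$) and the product-to-sum reduction of $g'(\varphi) = 0$ to $(p+2)\cos(2\theta - 3\varphi) = (p-2)\cos(2\theta - \varphi)$ are both correct.

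The genuine gap is in your last step. The claim that ``as $\varphi$ approaches either endpoint, one of $\sin(2(\theta-\varphi))$ or $\cos\varphi$ tends to zero, so $g(\varphi)\to+\infty$'' is false in four of the six cases, so existence of an interior critical point does not follow from your boundary argument. Concretely, for $\tfrac{\pi}{4} < \theta < \tfrac{\pi}{2}$ the prescribed interval is $\left(0, \theta - \tfrac{\pi}{4}\right)$ and $g$ is finite at \emph{both} endpoints: at $\varphi = 0$ one has $\cos\varphi = 1$ and $\sin(2\theta) \neq 0$, while at $\varphi = \theta - \tfrac{\pi}{4}$ one has $\sin\left(2(\theta-\varphi)\right) = \sin\tfrac{\pi}{2} = 1$ and $\cos\left(\theta - \tfrac{\pi}{4}\right) > 0$. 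The same failure occurs at the endpoint $\theta - \tfrac{\pi}{4}$ when $\tfrac{\pi}{2} \le \theta < \tfrac{3\pi}{4}$, and in the mirror cases with $\theta < 0$; only for $\tfrac{3\pi}{4} \le |\theta| < \pi$ do both endpoints blow up. The repair is to test the sign of $g'$ at the endpoints rather than the size of $g$: in the first case $g'(0) = 2\cot(2\theta) < 0$ and $g'\left(\theta - \tfrac{\pi}{4}\right) = p\tan\left(\theta - \tfrac{\pi}{4}\right) > 0$; in the other cases the endpoint where $2(\theta - \varphi) \to \pm\pi$ gives $g' \to -\infty$ (or $+\infty$ where $\varphi \to \pm\tfrac{\pi}{2}$), and the finite endpoint $\theta \mp \tfrac{\pi}{4}$ gives $g' = p\tan\left(\theta \mp \tfrac{\pi}{4}\right)$ of the appropriate sign. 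Since $g''(\varphi) = 4\csc^2\left(2(\theta - \varphi)\right) + p\sec^2\varphi > 0$, the function $g'$ is strictly increasing on each interval, so the sign change produces exactly one critical point, which is automatically the minimiser; moreover $\sin\left(2(\theta-\varphi)\right)\cos\varphi \neq 0$ in the interior of each interval, so zeros of $g'$ coincide with solutions of the cleared trigonometric equation, giving the uniqueness asserted in the statement. With this correction your argument is complete.
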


We remark that the value of $\varphi$ in this proposition is chosen so as to minimize the right-hand side of the inequality \eqref{eq86}.

\begin{proposition}\label{prop2} For any complex $p$ with $\Re(p)>0$, we have
\begin{gather}\label{eq87}
\begin{split}
\left| \Pi _p (w) \right| \le \; & \frac{1}{2}  + \frac{\left| p \right|}{2 {\Re (p)} }\Gamma \left( \frac{\Re (p)}{2} + 1 \right) {\bf F} \left( \frac{1}{2},\frac{\Re (p)}{2};\frac{\Re (p)}{2} + 1;\sin ^2 (\arg w) \right)\max (1,e^{ - \Im (p)\arg w} ) \\ & + \frac{1}{2}\max (1,e^{ \Im (p)\left( { \pm\frac{\pi }{2} - \arg w} \right)} )
\\  \le \; & \frac{1}{2} + \frac{\left| p \right|}{2\Re (p)}\chi (\Re (p))\max (1,e^{ - \Im (p)\arg w} ) + \frac{1}{2}\max (1,e^{\Im (p)\left(  \pm \frac{\pi }{2} - \arg w \right)} )
\end{split}
\end{gather}
and
\begin{align*}
\left| \Pi _p (w) \right| \le \; & \frac{1}{2}  + \frac{\left| p \right|}{2 {\Re (p)} } \Gamma \left( \frac{\Re (p)}{2} + 1 \right) {\bf F} \left( \frac{1}{2},\frac{\Re (p)}{2};\frac{\Re (p)}{2} + 1;\sin ^2 (\arg w) \right)\max (1,e^{ - \Im (p)\arg w} ) \\ & + \frac{\Gamma (\Re (p))}{2\left| \Gamma (p) \right|}
\\  \le \; & \frac{1}{2} + \frac{\left| p \right|}{2\Re (p)}\chi (\Re (p))\max (1,e^{ - \Im (p)\arg w} ) + \frac{\Gamma (\Re (p))}{2\left| \Gamma (p) \right|},
\end{align*}
for $\frac{\pi }{4} < \pm \arg w \le \frac{\pi }{2}$. Here $\chi(p)$ is defined by \eqref{eq96}.
\end{proposition}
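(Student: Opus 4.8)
The plan is to work directly from the integral representation \eqref{eq01} and its analytic continuation to the boundary sector, isolating the single feature that produces the growth of $\Pi_p(w)$ near $|\arg w|=\frac{\pi}{2}$. First I would split the kernel by partial fractions: writing $\frac{1}{1+(t/w)^{2}}=\frac{1}{2}\bigl(\frac{1}{1-it/w}+\frac{1}{1+it/w}\bigr)$ in \eqref{eq01} gives $\Pi_p(w)=\frac{1}{2\Gamma(p)}(I_{+}+I_{-})$ with $I_{\pm}=\int_{0}^{+\infty}\frac{t^{p-1}e^{-t}}{1\mp it/w}\,dt$. The integrand of $I_{+}$ has its pole at $t=-iw$, i.e.\ at $\arg t=\arg w-\frac{\pi}{2}\in(-\frac{\pi}{4},0]$, which migrates onto the positive real axis exactly as $\arg w\to\frac{\pi}{2}$, whereas the pole of $I_{-}$ sits at $\arg t=\arg w+\frac{\pi}{2}$, safely away from the ray of integration. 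This asymmetry is the whole source of the two distinct groups of terms in the asserted estimate.

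The term $I_{-}$ is the benign one. On the original ray $|1+it/w|\ge 1$, so $|I_{-}|\le\Gamma(\Re(p))$, and division by $2|\Gamma(p)|$ produces the additive constant $\frac{\Gamma(\Re(p))}{2|\Gamma(p)|}$ appearing in the second form. Alternatively, rotating the contour of $I_{-}$ down to the ray $\arg t=\arg w-\frac{\pi}{2}$ crosses no pole and renders $it/w$ a nonnegative real number, so the denominator is again $\ge1$; the only change is the factor $|t^{p-1}|$ along the rotated ray, which yields $\max(1,e^{\Im(p)(\pm\frac{\pi}{2}-\arg w)})$ and hence the corresponding term of the first form. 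Thus one and the same piece is bounded two ways, accounting for the difference between the two inequalities.

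The substantive work is the treatment of $I_{+}$, and this is where I expect the main obstacle. The naive modulus bound fails here because along the real axis the decay is only $e^{-|t|\cos(\arg w)}$, which degenerates as $\arg w\to\frac{\pi}{2}$; the true bound stays finite there, so the oscillation of the kernel must be exploited rather than discarded. I would therefore deform the contour of $I_{+}$ past the approaching pole, so that the relevant decay becomes $e^{-|t|\sin(\arg w)}$. The straight portions of the deformed path contribute the constant $\frac{1}{2}$ (heuristically, $\frac{I_{+}}{2\Gamma(p)}\to\frac12$ as $|w|\to\infty$), while the local contribution near the pole, after bounding $|t^{p-1}|$ by $|t|^{\Re(p)-1}\max(1,e^{-\Im(p)\arg w})$, reduces to a one‑dimensional integral that I would identify using the closed form ${}_2F_{1}\!\bigl(\tfrac12,\tfrac q2;\tfrac q2+1;\sin^{2}\phi\bigr)=\frac{q}{\sin^{q}\phi}\int_{0}^{\phi}\sin^{q-1}\chi\,d\chi$ (with $q=\Re(p)$, $\phi=\arg w$). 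This matches the contribution with the regularized hypergeometric term $\frac{|p|}{2\Re(p)}\Gamma\!\bigl(\tfrac{\Re(p)}{2}+1\bigr){\bf F}\!\bigl(\tfrac12,\tfrac{\Re(p)}{2};\tfrac{\Re(p)}{2}+1;\sin^{2}(\arg w)\bigr)$. Collecting the three pieces gives the first inequality; the delicate point is carrying out this deformation uniformly in $|w|$ and checking that no $|w|$‑dependence survives in the final constants.

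Finally, the second inequality in each pair follows immediately from the first. Since the power series of ${\bf F}\!\bigl(\tfrac12,\tfrac{\Re(p)}{2};\tfrac{\Re(p)}{2}+1;s\bigr)$ has nonnegative coefficients, it is increasing in $s$, so $\sin^{2}(\arg w)\le1$ yields ${\bf F}(\cdots;\sin^{2}(\arg w))\le{\bf F}(\cdots;1)$; evaluating the endpoint value by Gauss's summation theorem gives $\Gamma\!\bigl(\tfrac{\Re(p)}{2}+1\bigr){\bf F}\!\bigl(\tfrac12,\tfrac{\Re(p)}{2};\tfrac{\Re(p)}{2}+1;1\bigr)=\sqrt{\pi}\,\frac{\Gamma(\frac{\Re(p)}{2}+1)}{\Gamma(\frac{\Re(p)}{2}+\frac12)}=\chi(\Re(p))$ by \eqref{eq96}, which is precisely the substitution converting each sharper bound into its simpler counterpart.
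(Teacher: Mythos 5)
Your high-level architecture is the right one, and two ingredients are fully correct: the identity ${}_2F_1\bigl(\frac12,\frac{q}{2};\frac{q}{2}+1;\sin^2\phi\bigr)=\frac{q}{\sin^q\phi}\int_0^\phi\sin^{q-1}\chi\,d\chi$ (with $q=\Re(p)$) is indeed how the hypergeometric term should be recognized, and the monotonicity-plus-Gauss argument deducing the second inequality of each pair from the first is complete. (Note that the paper itself contains no proof: Appendix~\ref{appendixa} defers to \cite{Nemes2} and \cite{Nemes3}, so the comparison below is with the method of those papers.) The gap lies in both central estimates: every bound you propose is obtained by taking absolute values inside integrals that still carry the kernel $t^{p-1}e^{-t}/\Gamma(p)$, and any such estimate inevitably produces the factor $\Gamma(\Re(p))/|\Gamma(p)|\ge 1$ and, after rotation, powers of $1/\sin(\arg w)$; so it can never deliver the clean constants $\frac12$, $\frac{|p|}{2\Re(p)}$ and $\frac12\max\bigl(1,e^{\Im(p)(\pm\pi/2-\arg w)}\bigr)$ of \eqref{eq87}. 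Concretely (take $\frac{\pi}{4}<\arg w\le\frac{\pi}{2}$; the other case is symmetric): rotating $I_-$ to the ray $\arg t=\arg w-\frac{\pi}{2}$ does make the denominator $\ge 1$, but you overlooked that it also weakens the exponential factor to $|e^{-t}|=e^{-\tau\sin(\arg w)}$, so what actually comes out is $|I_-|\le e^{\Im(p)(\pi/2-\arg w)}\Gamma(\Re(p))\sin^{-\Re(p)}(\arg w)$, i.e.\ a contribution $\frac{\Gamma(\Re(p))}{2|\Gamma(p)|}\cdot\frac{e^{\Im(p)(\pi/2-\arg w)}}{\sin^{\Re(p)}(\arg w)}$, not $\frac12\max\bigl(1,e^{\Im(p)(\pi/2-\arg w)}\bigr)$. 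Likewise for $I_+$: deforming past the pole at $t=-iw$ picks up a residue of modulus $\pi|w|^{\Re(p)}e^{\Im(p)(\pi/2-\arg w)}e^{-|w|\sin(\arg w)}/|\Gamma(p)|$; this is bounded in $|w|$, but its supremum over $|w|$ is of order $\Gamma(\Re(p)+1)e^{\Im(p)(\pi/2-\arg w)}/\bigl(|\Gamma(p)|\sqrt{\Re(p)}\,\sin^{\Re(p)}(\arg w)\bigr)$, which already exceeds the stated hypergeometric term (by the factor $\Gamma(\Re(p)+1)/|\Gamma(p+1)|$, arbitrarily large for non-real $p$), and the claim that the straight portions contribute exactly $\frac12$ is a large-$|w|$ heuristic, not an estimate valid for all $w$ in the sector.

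What is missing is a kernel swap that eliminates $\Gamma(p)$ \emph{before} any estimation. From the paper's definition of $\Pi_p(w)$ and the Laplace representation $z^pe^z\Gamma(1-p,z)=\int_0^\infty(1+s/z)^{-p}e^{-s}ds$ one gets $\Pi_p(w)=\frac12\int_0^\infty\bigl[(1-is/w)^{-p}+(1+is/w)^{-p}\bigr]e^{-s}ds$. The benign half then needs no contour work at all: on the real axis $|1+is/w|\ge1$ and $\arg(1+is/w)\in\bigl[0,\frac{\pi}{2}-\arg w\bigr]$, so its integrand is pointwise at most $\max\bigl(1,e^{\Im(p)(\pi/2-\arg w)}\bigr)e^{-s}$, giving exactly the term $\frac12\max\bigl(1,e^{\Im(p)(\pi/2-\arg w)}\bigr)$; your direct bound $|I_-|\le\Gamma(\Re(p))$ supplies the alternative constant $\frac{\Gamma(\Re(p))}{2|\Gamma(p)|}$ of the second display. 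For the hard half one first integrates by parts, $\frac12\int_0^\infty(1-is/w)^{-p}e^{-s}ds=\frac12+\frac{ip}{2w}\int_0^\infty(1-is/w)^{-p-1}e^{-s}ds$, which is the true source of the exact $\frac12$ and of the factor $p$, and then rotates the remaining contour to the positive imaginary axis (legitimate: the branch point $-iw$ lies in the fourth quadrant, and the new integrand is $O(|s|^{-\Re(p)-1})$, so the arc at infinity vanishes even at $\arg w=\frac{\pi}{2}$). Setting $s=it$ turns $1-is/w$ into $1+t/w$, whose argument lies in $(-\arg w,0]$ --- this is where $\max\bigl(1,e^{-\Im(p)\arg w}\bigr)$ comes from --- and the modulus integral evaluates in closed form: $\int_0^\infty|1+t/w|^{-\Re(p)-1}dt=\frac{|w|}{\sin^{\Re(p)}(\arg w)}\int_0^{\arg w}\sin^{\Re(p)-1}\chi\,d\chi$, which by your identity is precisely $\frac{|w|}{\Re(p)}\Gamma\bigl(\frac{\Re(p)}{2}+1\bigr){\bf F}\bigl(\frac12,\frac{\Re(p)}{2};\frac{\Re(p)}{2}+1;\sin^2(\arg w)\bigr)$. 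So the hypergeometric function enters as the exact value of the whole rotated integral, not as a local contribution near the singularity; without the $\Gamma(p)$-free representation, the integration by parts, and the rotation to the imaginary axis, the constants stated in the proposition are out of reach.
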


\begin{proposition}\label{prop3} For any complex $p$ with $\Re(p)>0$, we have
\begin{gather}\label{eq105}
\begin{split}
\left| \Pi _p (w) \right| & \le e^{\Im (p)\left( { \pm \frac{\pi }{2} - \arg w} \right)} \frac{\Gamma (\Re (p))}{\left| {\Gamma (p)} \right|}\frac{{\sqrt {2\pi \Re (p)} }}{{2\left| {\sin (\arg w)} \right|^{\Re (p)} }} + |\Pi _p (we^{ \mp \pi i} )| \\ & \le e^{\Im (p)\left( { \pm \frac{\pi }{2} - \arg w} \right)} \frac{\Gamma (\Re (p))}{\left| \Gamma (p) \right|}\frac{\chi (\Re (p))}{\left| \sin (\arg w) \right|^{\Re (p)} } + |\Pi _p (we^{ \mp \pi i} )|,
\end{split}
\end{gather}
for $\frac{\pi}{2}<\pm \arg w <\pi$.
\end{proposition}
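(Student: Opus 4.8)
The plan is to reduce the bound in the sector $\frac{\pi}{2}<\pm\arg w<\pi$ to the already-established estimates in $|\arg w|<\frac{\pi}{2}$ by means of a connection formula that crosses the Stokes line $\arg w=\pm\frac{\pi}{2}$. I take the upper sign throughout (so $\frac{\pi}{2}<\arg w<\pi$); the lower sign follows by complex conjugation. First I would rotate the argument: since $\arg(we^{-\pi i})=\arg w-\pi\in(-\frac{\pi}{2},0)$, the integral representation \eqref{eq01} applies to $we^{-\pi i}$, and because $(t/(we^{-\pi i}))^2=(t/w)^2$, a direct substitution shows that $\Pi_p(we^{-\pi i})$ equals the straight-contour integral $\frac{1}{\Gamma(p)}\int_0^{+\infty}\frac{t^{p-1}e^{-t}}{1+(t/w)^2}dt$. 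This integral is no longer equal to $\Pi_p(w)$ once $\arg w$ exceeds $\frac{\pi}{2}$: as $\arg w$ passes through $\frac{\pi}{2}$, the pole of the integrand at $t=-iw$ (note $\arg(-iw)=\arg w-\frac{\pi}{2}$) crosses the positive real $t$-axis. The analytic continuation $\Pi_p(w)$ is obtained by deforming the contour to follow the pole, and hence differs from the straight-contour integral by $\pm 2\pi i$ times the residue at $t=-iw$.

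The first key step is the residue computation. Writing $1+(t/w)^2=(t-iw)(t+iw)/w^2$, the residue of $\frac{t^{p-1}e^{-t}}{1+(t/w)^2}$ at $t=-iw$ simplifies to $-\tfrac12 e^{-\frac{\pi}{2}ip}w^pe^{iw}$, which yields the connection formula
\[
\Pi_p(w)=\Pi_p(we^{-\pi i})\mp\frac{\pi i\,e^{-\frac{\pi}{2}ip}}{\Gamma(p)}w^pe^{iw},
\]
the sign being fixed by the direction in which the pole crosses the contour. The triangle inequality then gives $|\Pi_p(w)|\le |\Pi_p(we^{-\pi i})|+T$, where $T$ is the modulus of the Stokes term; this already accounts for the $|\Pi_p(we^{\mp\pi i})|$ appearing on the right-hand side of \eqref{eq105}, so it remains to bound $T$.

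The second key step is to estimate $T$ uniformly in $|w|$. Using $|w^p|=|w|^{\Re(p)}e^{-\Im(p)\arg w}$, $|e^{iw}|=e^{-|w|\sin(\arg w)}$ and $|e^{-\frac{\pi}{2}ip}|=e^{\frac{\pi}{2}\Im(p)}$, I obtain $T=\frac{\pi}{|\Gamma(p)|}e^{\Im(p)(\frac{\pi}{2}-\arg w)}|w|^{\Re(p)}e^{-|w|\sin(\arg w)}$. Since $\sin(\arg w)>0$ on $(\frac{\pi}{2},\pi)$, maximizing the $|w|$-dependent factor over $|w|>0$ gives $|w|^{\Re(p)}e^{-|w|\sin(\arg w)}\le\left(\frac{\Re(p)}{e\sin(\arg w)}\right)^{\Re(p)}$, the maximum being attained at $|w|=\Re(p)/\sin(\arg w)$. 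Finally, the elementary Stirling lower bound $\Gamma(x)\ge\sqrt{2\pi}\,x^{x-\frac12}e^{-x}$, equivalently $(\Re(p))^{\Re(p)}e^{-\Re(p)}\le\Gamma(\Re(p))\sqrt{\Re(p)}/\sqrt{2\pi}$, collapses $T$ exactly into $e^{\Im(p)(\frac{\pi}{2}-\arg w)}\frac{\Gamma(\Re(p))}{|\Gamma(p)|}\frac{\sqrt{2\pi\Re(p)}}{2|\sin(\arg w)|^{\Re(p)}}$, which is the first term of \eqref{eq105}. The second inequality in \eqref{eq105} then follows from the known estimate $\frac{\sqrt{2\pi\Re(p)}}{2}\le\chi(\Re(p))$ (a consequence of Wendel's inequality for ratios of gamma functions, since $\frac{\sqrt{2\pi\Re(p)}}{2}=\sqrt{\pi\Re(p)/2}$).

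I expect the main obstacle to be the rigorous justification of the connection formula rather than the subsequent estimation, which is routine. In particular, one must pin down the correct sign and orientation as the pole $t=-iw$ sweeps across the contour, and verify that deforming the straight contour genuinely reproduces the analytic continuation of $\Pi_p$ (as defined through the incomplete gamma functions) rather than a spurious branch. Once the connection formula is secured, the residue evaluation, the single-variable maximization, and the Stirling and Wendel inequalities combine in a direct fashion to give the stated bound.
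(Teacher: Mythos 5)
Your proof is correct, and it is essentially the argument behind the paper's citation: the paper offers no internal proof of Proposition \ref{prop3}, deferring instead to \cite{Nemes2} and \cite{Nemes3}, where the bound is obtained exactly along the route you propose --- write $\Pi_p(w)$ in the sector $\frac{\pi}{2}<\pm\arg w<\pi$ as $\Pi_p(we^{\mp\pi i})$ plus the Stokes discontinuity term, bound that term uniformly in $|w|$ by maximizing $|w|^{\Re(p)}e^{-|w|\left|\sin(\arg w)\right|}$ at $|w|=\Re(p)/\left|\sin(\arg w)\right|$, and finish with Stirling's lower bound $\Gamma(x)\ge\sqrt{2\pi}\,x^{x-\frac{1}{2}}e^{-x}$ together with $\sqrt{\pi\Re(p)/2}\le\chi(\Re(p))$ (the latter is immediate from Watson's two-sided inequality quoted at the end of Appendix \ref{appendixa}). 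Two small points. First, the sign you leave open comes out opposite to your $\mp$: for $\frac{\pi}{2}<\arg w<\pi$ one has
\[
\Pi_p(w)=\Pi_p(we^{-\pi i})+\frac{\pi i\, e^{-\frac{\pi}{2}ip}}{\Gamma(p)}\,w^p e^{iw},
\]
but this is immaterial for your argument, since you only use the triangle inequality on the moduli. Second, the obstacle you flag --- verifying that the deformed contour reproduces the analytic continuation of $\Pi_p$ as defined through incomplete gamma functions --- can be bypassed entirely: apply the continuation formula $\Gamma(a,ze^{2\pi i m})=e^{2\pi i m a}\Gamma(a,z)+(1-e^{2\pi i m a})\Gamma(a)$ \cite[eq. 8.2.10]{NIST} with $a=1-p$, $m=-1$, $z=we^{\frac{\pi}{2}i}$ directly to the defining expression for $\Pi_p(we^{-\pi i})$, and invoke the reflection formula $\Gamma(1-p)\sin(\pi p)=\pi/\Gamma(p)$; the connection formula above then follows by pure algebra, with the sign determined unambiguously and no contour deformation needed.
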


The dependence on $|w|$ in the estimates \eqref{eq105} may be eliminated by employing the bounds for $|\Pi _p (we^{ \mp \pi i} )|$ that were given previously.

Finally, we mention the following two-sided inequality proved by Watson \cite{Watson2} for positive real values of $p$:
\[
\sqrt {\frac{\pi }{2}\Big( p + \frac{1}{2} \Big)}  < \chi(p) < \sqrt {\frac{\pi }{2}\Big( p + \frac{2}{\pi} \Big)} .
\]
The upper inequality can be used to simplify the error bounds involving $\chi(p)$.

\end{document}